\newtheorem{theorem}{Theorem}[section]
\newtheorem{lemma}[theorem]{Lemma}
\newtheorem{corollary}[theorem]{Corollary}
\newtheorem{proposition}[theorem]{Proposition}
\theoremstyle{definition}
\newtheorem{problem}[theorem]{Problem}
\newtheorem{definition}[theorem]{Definition}
\newtheorem{example}[theorem]{Example}
\newtheorem{remark}[theorem]{Remark}
\newtheorem{remarks}[theorem]{Remarks}
\numberwithin{equation}{section}
\newcommand{\N}{\mathbf N}
\newcommand{\Z}{\mathbf Z}
\newcommand{\R}{\mathbf R}
\newcommand{\Q}{\mathbf Q}
\newcommand{\mk}{\mathfrak}
\newcommand{\ab}{\operatorname{ab}}
\newcommand{\can}{\operatorname{can}}
\newcommand{\CB}{\operatorname{CB}}
\newcommand{\Cond}{\operatorname{Cond}}
\newcommand{\GL}{\operatorname{GL}}
\newcommand{\gp}{\operatorname{gp}}
\newcommand{\Hom}{\operatorname{Hom}}
\newcommand{\id}{\mathbbm{1}}
\newcommand{\SL}{\operatorname{SL}}
\newcommand{\EL}{\operatorname{EL}}
\DeclareMathOperator{\res}{res}
\newcommand{\HNN}{\operatorname{HNN}}
\newcommand{\ZZ}{\mathcal{Z}}
\def\DD{\mathcal{D}}
\def\FF{\mathcal{F}}
\def\GG{\mathcal{G}}
\def\NN{\mathcal{N}}
\def\OO{\mathcal{O}}
\def\RR{\mathcal{R}}
\def\VV{\mathcal{V}}
\newcommand{\mono}{\rightarrowtail}
\newcommand{\incl}{\hookrightarrow}
\newcommand{\epi}{\twoheadrightarrow}
\newcommand{\iso}{\stackrel{\sim}{\longrightarrow}}
\title{Infinite presentability of groups and condensation}
\author[Bieri, Cornulier, Guyot, \and Strebel]
{Robert Bieri, Yves Cornulier, Luc Guyot, \and Ralph Strebel}
\address{Department of Mathematics\\ Johann Wolfgang Goethe-Universit{\"a}t Frankfurt\\
60054 Frankfurt am Main, Germany}
\email{bieri@math.uni-frankfurt.de}
\address{Laboratoire de Math\'ematiques\\
B\^atiment 425, Universit\'e Paris-Sud 11\\
91405 Orsay\\France}
\email{yves.cornulier@math.u-psud.fr}
\address{STI, EPFL, INN 238 Station 14\\
 Lausanne 1015\\ Switzerland}
\email{luc.guyot@cyberbotics.com}
\address{D\'epartement de Math\'ematiques\\
Chemin du Mus\'ee 23\\ Universit\'e de Fribourg\\ 1700 Fribourg\\ Switzerland}
\email{ralph.strebel@unifr.ch}
\subjclass[2010]{Primary 20F05, Secondary 20E05, 20E06, 20F16, 20J06}
\keywords{Cantor-Bendixson rank, condensation group, infinitely presented metabelian group,
invariant Sigma, Thompson's group $F$, space of marked groups}
\thanks{L.\ G.\ thanks the Max Planck Institute for Mathematics in Bonn for the excellent conditions provided for his stay at this institution, during which part of the paper was written.}
\date{September 9, 2013}
\begin{document}
%

\begin{abstract}
We describe various classes of infinitely presented groups
that are condensation points in the space of marked groups. 
A well-known class of such groups consists of finitely generated groups 
admitting an infinite minimal presentation.
We introduce here a larger class of condensation groups,
called infinitely independently presentable groups, 
and establish criteria 
which allow one to infer that a group is infinitely independently presentable.
In addition,
we construct examples of finitely generated groups with no minimal presentation, 
among them infinitely presented groups with Cantor-Bendixson rank 1,
and we prove that every infinitely presented metabelian group is a condensation group. 
\end{abstract}

\maketitle
%
%
\section{Introduction} \label{intro}
In \cite{Neu37}, B. H. Neumann raises the question
whether there are finitely generated groups which cannot be defined by a finite set of relations.
Neumann asserts that such groups exist and offers two justifications.
One is based in the fact that there are $2^{\aleph_0}$ finitely generated groups
no two of which are isomorphic (\cite[Thm.\,14]{Neu37},
but only countably many isomorphism types of groups with a finite presentation. 
The second justification consists in the construction of a group $G = F/R$, 
defined by 2 generators and an infinite sequence of relators,
and the verification that no relator is contained in the normal subgroup generated by the remaining relators
(\cite[Thm.\,13]{Neu37}).
In the sequel, a presentation will be said to be \emph{minimal} 
if none of its relators is a consequence of the remaining ones
and a group will be called \emph{infinitely presented} if it does not admit a finite presentation.
A finitely generated group $G$ which admits a minimal presentation with an infinite set of relators 
is an infinitely presented group but the converse need not hold (see, e.\.g., Proposition 4.10).
Neumann's second justification uses therefore a condition 
which is sufficient, but not necessary,  for $G$ to be infinitely presented.
There are other conditions of this kind, 
in particular the following two: 
the hypothesis that the multiplier $H_2(G, \Z)$ of the finitely generated group be infinitely generated
and the assumption that $G$ map onto $\Z$, say $\pi \colon G \epi \Z$,
have no non-abelian free subgroup and be not an ascending HNN-extension with finitely generated base group contained in $\ker \pi$ (cf.\,\cite[Thm.\,A]{BiSt80}).

In this paper we are concerned with the question
whether a finitely generated group satisfying one of the stated sufficient conditions 
enjoys other interesting properties besides being infinitely presented.
We shall see, in particular,  
that the group is then often a \emph{condensation group},
i.\,e., a condensation point in the space of marked groups.
%

%
%
\subsection{Infinitely independently presented groups}
\label{ssec:INIP}
%
Key notions of this paper are the concept of an infinite independent family of normal subgroups
and the related concept of an infinitely independently presented group.
\begin{definition}
\label{definition:independent-family}
Let $G$ be a group 
and $\{N_i  \mid i\in I\}$ a family of normal subgroups of $G$. 
We say this family  is \emph{independent} if none of the normal subgroups $N_i$ is redundant,
i.\,e., contained in the subgroup generated by the remaining normal subgroups.
\end{definition}

\begin{definition}
\label{definition:INIP}
Let $G$ be a finitely generated group.\
We say that $G$ is \emph{infinitely independently presentable} (INIP for short) if there exists an epimorphism
$\pi \colon F \epi G$ of a free group of finite rank onto $G$ 
such that the kernel $R$ of $\pi$ is generated by an infinite independent family $\{R_i \mid i\in I\}$ of normal subgroups $R_i \triangleleft F$.
\end{definition} 

An infinitely minimally presented group yields an infinite independent family of relators in the free group 
and hence an infinite independent family of normal subgroups. 
Finitely generated groups admitting an infinite minimal presentation are therefore INIP. 
Another class of infinitely independently presentable groups is provided by   
\begin{proposition}[Prop.\ \ref{p_schur}]
\label{prp:Infinitely-independently-presented-groups-and-Schur-multiplier}
Let $G$ be a finitely generated group 
and assume that the Schur multiplier $H_2(G,\Z)$ is generated by an infinite independent family of subgroups. 
Then $G$ is infinitely independently presentable.
\end{proposition}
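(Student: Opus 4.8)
The plan is to reduce the statement to a purely abelian question via Hopf's formula and to settle that question using the structure theory of minimax abelian groups. First I would fix a finite generating set of $G$, obtaining a free group $F$ of finite rank and an epimorphism $F \epi G$ with kernel $R$. Since $F$ is free of finite rank it is finitely presented, so by Lemma-Definition \ref{indpre} it suffices to show that $G$ is INIP relative to $F$, i.e.\ that $R$ is the join of an infinite independent family of normal subgroups of $F$. The natural device is to pass to the central quotient $\ov F = F/[F,R]$: here $\ov R = R/[F,R]$ is central, hence abelian, so every subgroup of $\ov R$ is normal in $\ov F$, and $N \mapsto N/[F,R]$ is a lattice isomorphism from $\{N \trianglelefteq F : [F,R] \le N \le R\}$ onto the subgroups of $\ov R$ that sends joins to sums and therefore preserves independence. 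By Hopf's formula $H_2(G,\Z) \cong (R \cap [F,F])/[F,R]$ sits inside $\ov R$ with $\ov R/H_2(G,\Z)\cong R/(R\cap[F,F])\cong R[F,F]/[F,F]\le F^{\ab}\cong \Z^n$ free abelian; the extension splits, so $\ov R \cong H_2(G,\Z)\oplus \Z^k$. As $H_2(G,\Z)$ is not minimax and the minimax property passes to subgroups, $\ov R$ is not minimax either.

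Everything then comes down to the abelian lemma: \emph{a non-minimax abelian group $A$ is the sum of an infinite independent family of subgroups}. Granting this for $A=\ov R$, I pull the family back to normal subgroups $[F,R]\le N_i\le R$ of $F$; by the lattice isomorphism they are independent and their join is $R$, which is exactly what INIP relative to $F$ demands. To prove the abelian lemma I would produce an \textbf{infinite-direct-sum quotient}, that is a subgroup $C\le A$ with $A/C=\bigoplus_{i\in I}D_i$ ($I$ infinite, $D_i\ne 0$): taking $A_i$ to be the preimage of $D_i$ gives $\sum_{j\in J}A_j=$ preimage of $\bigoplus_{j\in J}D_j$, so distinct $J$ yield distinct sums while $\sum_i A_i=A$. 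This is precisely the mechanism behind $\Q=\sum_p \Z[1/p]$ arising from $\Q/\Z=\bigoplus_p \Z(p^\infty)$.

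The construction is organized by the structure theory (a minimax abelian group is an extension of a finitely generated group by a Černikov group), noting that $A=H_2(G,\Z)\oplus\Z^k$ is countable since the Schur multiplier of a finitely generated group is a subquotient of the countable group $F$. I would split off the divisible part — if it is non-minimax it is itself an infinite direct sum — and reduce to a reduced group $E$. If the torsion-free quotient $M=E/t(E)$ is non-minimax, I extract the quotient from it: for finite rank, $M/M_0$ modulo a full-rank free subgroup $M_0$ is a torsion group whose primary components have bounded rank, hence is supported on infinitely many primes; for infinite rank, $M$ is either free or maps onto an infinite torsion group. Otherwise $t(E)$ is non-minimax: if some primary part $t_p(E)$ is infinite then, because $t_p(E)\cap pE=p\,t_p(E)$, the infinite-dimensional space $t_p(E)/p\,t_p(E)$ embeds into $E/pE$, exhibiting $\bigoplus_\infty \Z/p$ as a quotient; and if instead infinitely many primary parts are nonzero but each finite, I quotient by the finitely generated subgroup carrying a maximal independent set of $M$, landing in a torsion group still supported on infinitely many primes, whose canonical primary decomposition is the desired infinite direct sum.

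The hard part is this last step: upgrading ``$A$ is large'' to a genuine direct-sum \emph{quotient} rather than merely an infinite independent family or an infinite direct-sum \emph{subgroup}. The $\Q$-case shows the family need not be a direct sum, and the torsion cases show that an infinite direct sum sitting inside $A$ must be transported to one sitting on top of $A$. The device that tames this is the reduction to the torsion case by factoring out a finitely generated subgroup, after which the automatic primary decomposition does the work; checking that this quotient remains supported on infinitely many primes, and assembling the divisible/reduced and torsion/torsion-free cases into a single statement, is the technical heart of the argument.
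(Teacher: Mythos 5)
Your overall route coincides with the paper's proof of Proposition \ref{p_schur}: you transfer non-minimaxity from $H_2(G,\Z)$ to the relation module $\ov{R}=R/[F,R]$ (the paper uses the five-term exact sequence where you use Hopf's formula together with the splitting $\ov{R}\cong H_2(G,\Z)\oplus\Z^k$ --- the same information), you reduce everything to the abelian fact that a non-minimax abelian group has a quotient which is an infinite direct sum of nonzero groups (implication (1)$\Rightarrow$(5) of Lemma \ref{abq}), and you pull the summands back to an infinite independent family of normal subgroups of $F$ whose join is $R$. All of that is exactly the paper's argument.

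The gap sits inside your proof of the abelian lemma. A small instance first: a non-minimax divisible group need not be ``itself an infinite direct sum'' --- $\Q$ is divisible, non-minimax and indecomposable --- although it does admit the infinite direct-sum quotient $\Q/\Z$, so this is repaired on the spot. The more serious problem is the infinite-rank torsion-free case: the dichotomy ``$M$ is free or maps onto an infinite torsion group'' does not close the argument, because an infinite torsion group need not have any infinite direct-sum quotient. Concretely, for $M=\Z^{(\N)}+\Z[1/p]e_0\subset\Q^{(\N)}$ (torsion-free, non-minimax, not free) and the full free subgroup $B=\Z^{(\N)}$, one gets $M/B\cong\Z(p^\infty)$, which is minimax and whose only quotients are $\Z(p^\infty)$ and $0$. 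So you must not merely exhibit an infinite torsion quotient; you must exhibit a \emph{non-minimax} one and then rerun the torsion analysis on it. This is precisely what Lemma \ref{abq} in the paper arranges by passing to $A/2B$ rather than $A/B$: the subgroup $B/2B\cong(\Z/2\Z)^{(I)}$ forces $A/2B$ to be non-minimax, after which the torsion machinery (infinitely many primes, or $P/pP$ infinite, or a divisible quotient $\Z(p^\infty)^{(I)}$ with $I$ infinite) applies. Relatedly, your step ``$t_p(E)$ infinite and reduced implies $t_p(E)/p\,t_p(E)$ infinite'' is true but not automatic: if $P/pP$ were finite one would get a nonzero divisible quotient of $P$ by a finite subgroup, which forces a quasicyclic subgroup of $P$ and contradicts reducedness; some such argument has to be supplied.
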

%

%
\subsection{The space $\GG_m$ of marked groups}
\label{ssec:Space-marked-groups}
%
Let $m$ be a positive integer.
A \emph{group marked by the index set }$I_m =\{1, 2, \ldots, m\}$ is a group $G$,
together with a map $\iota_{G} \colon I_m \to G$ whose image generates $G$.
Two marked groups  $(G, \iota_G)$ and $(H, \iota_H)$ are said to be isomorphic 
if there is an isomorphism $\varphi \colon G \iso H$ satisfying the relation $\iota_H = \varphi \circ \iota_G$. 

We denote by $\GG_m$ the set of groups marked by $I_m$,  
up to isomorphism of marked groups.

It is useful to have an explicit model of the set $\GG_m$.
To that end, we introduce the free group $F_m$ with basis $I_m$ 
and the marked group $(F_m, \iota_{F_m})$ 
where $\iota_{F_m} \colon I_m \incl F_m$ denotes the obvious embedding.

Given a group $(G, \iota_G)$ marked by $I_m$
there exists then a unique  epimorphism $\rho$ of the free group $F_m$ onto $G$ 
satisfying the relation $\rho \circ \iota_{F_m} = \iota_G$.
The kernel $R$ of $\rho$ does not change if the marked group $(G,\iota_G)$
is replaced by an isomorphic copy. 
The set $\GG_m$ is thus parameterized by the normal subgroups of $F_m$.
\smallskip

The space $\GG_m$ carries a natural topology 
which goes back to Chabauty \cite{Cha50} (see \cite{Gri05}, \cite[Sec.~2.2]{ChGu05} or \cite[p.~258]{CGP07} for more details). 
It has a convenient description where $\GG_m$ is identified with the set of normal subgroups of $F_m$.
Then the collection of all sets of the form
\begin{equation}
\label{eq:Basis-topology}
\OO_{\FF, \FF'} = \{ S \triangleleft F_m \mid \FF \subseteq S \text{ and } S \cap \FF' = \emptyset \},
\end{equation}
where $\FF$ and $\FF'$ range over {\em finite} subsets of $F_m$, is a basis of the topology on $\GG_m$. 

\subsection{Cantor-Bendixson rank of a finitely generated group}
\label{ssec:Cantor-Bendixson-rank}
%
If $T$ is a topological space,  
its subset of accumulation points gives rise to a subspace $T'$.
We now apply this construction to the topological space  $X= \GG_m$ and iterate it.
The outcome is a family of subspaces;
it starts with
\[
X^{(0)} = X, \quad X^{(1)} = X', \ldots, X^{(n+1)} = {X^{(n)}}',  \ldots, X^{(\omega)}  =
\bigcap\nolimits_n X^{(n)}, 
\]
and is continued by transfinite induction.
This family induces a function on $\GG_m$ with ordinal values,
called \emph{Cantor-Bendixson rank} or CB-rank for short 
(see \S\ref{sec:Elements-CB-theory} for a more detailed description).

The points with CB-rank 0 are nothing but the isolated points of $X$;
the points with CB-rank 1 are the points which are not isolated, but are isolated among non-isolated points.
At the other extreme, are the points 
which lie in the intersection of all $X^{(\alpha)}$; they are called \emph{condensation points}.

A finitely generated group $G$ is said to have CB-rank $\alpha$ 
if it occurs as a point of CB-rank $\alpha$ in a space $\GG_m$ for some integer $m$.
It is a remarkable fact that this rank does not depend on the generating system $\iota \colon I_m \to G$
and, in particular, not on $m$ (see part (3) of \cite[Lem.~1.3]{CGP07}).
The CB-rank is thus a group theoretic property;
it allows one to talk about ``isolated groups'' and ``condensation groups''. A sufficient condition for a finitely generated group $Q$ to be both condensation and infinitely presented is to be of {\em extrinsic condensation}, in the sense that for every finitely presentable group $G$ and epimorphism $G\epi Q$, the kernel $K$ has uncountably many subgroups that are normal in $G$. 

\subsection{Condensation groups}
\label{ssec:Condensation-groups}
One goal of this paper is to exhibit various classes of condensation groups.
To put these classes into perspective,
we begin with a few words on groups of Cantor-Bendixson rank 0 or 1.

\subsubsection{Isolated groups and groups of CB-rank 1}
\label{sssec:Isolated-groups-CB-rank-1}
%
Isolated groups can be characterized in a simple manner. 
The following result is a restatement of \cite[Prop.~2(a)]{Man82}. 
Its topological interpretation was obtained independently in \cite[Thm.~2.1]{Gri05} and \cite[Prop.\ 2.2]{CGP07}.

\begin{proposition}
\label{prp:Characterization-isolated-group}
A finitely generated group is isolated if, and only if, it admits a finite presentation 
and has a finite collection $(M_j)$ of nontrivial normal subgroups 
such that every nontrivial normal subgroup contains one of them.
\end{proposition}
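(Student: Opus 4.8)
The plan is to convert the topological statement into the combinatorics of normal subgroups of $F_m$, using the explicit basis \eqref{eq:Basis-topology}. Write $G = F_m/R$ with $R \triangleleft F_m$. Since the isolated points are exactly the isolated points of the space $\GG_m$, the group $G$ is isolated if and only if the singleton $\{R\}$ is open, i.e.\ there are \emph{finite} subsets $\FF, \FF' \subseteq F_m$ with $\{R\} = \OO_{\FF,\FF'}$. Unwinding the definition of $\OO_{\FF,\FF'}$, this says precisely: $\FF \subseteq R$ (the elements of $\FF$ are relators of $G$), $\FF' \cap R = \emptyset$ (the elements of $\FF'$ represent nontrivial elements of $G$), and $R$ is the \emph{only} normal subgroup of $F_m$ satisfying both constraints. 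The whole proof is then a translation of these three conditions across the correspondence theorem for $F_m \epi G$.

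For the implication ``isolated $\Rightarrow$ the stated conditions'', I would first extract finite presentability. Let $\langle\langle\FF\rangle\rangle$ denote the normal closure of $\FF$ in $F_m$. From $\FF \subseteq R$ we get $\langle\langle\FF\rangle\rangle \subseteq R$, hence $\langle\langle\FF\rangle\rangle \cap \FF' \subseteq R \cap \FF' = \emptyset$. Thus $\langle\langle\FF\rangle\rangle$ is itself a normal subgroup containing $\FF$ and avoiding $\FF'$, so the uniqueness clause forces $\langle\langle\FF\rangle\rangle = R$; that is, $\langle I_m \mid \FF \rangle$ is a finite presentation of $G$. To produce the family $(M_j)$, for each $w \in \FF'$ I would let $M_w \triangleleft G$ be the normal closure of the image of $w$ in $G$, which is nontrivial because $w \notin R$. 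Given any nontrivial normal subgroup $N \triangleleft G$, its preimage $T \triangleleft F_m$ strictly contains $R$; since $T$ still contains $\FF$, uniqueness forbids $T \cap \FF' = \emptyset$, so $T$ contains some $w \in \FF'$ and therefore $N \supseteq M_w$. As $\FF'$ is finite, the $M_w$ form the desired finite collection.

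For the converse, suppose $G = \langle I_m \mid \FF \rangle$ with $\FF$ finite (so $R = \langle\langle\FF\rangle\rangle$) and that $(M_j)_{1 \le j \le k}$ is the given finite family of nontrivial normal subgroups. For each $j$ I would choose a nontrivial element of $M_j$ and lift it to a word $w_j \in F_m \setminus R$, and set $\FF' = \{w_1, \dots, w_k\}$. Then $R \in \OO_{\FF,\FF'}$ by construction. To see it is the only point, suppose $S \triangleleft F_m$ satisfies $\FF \subseteq S$ and $S \cap \FF' = \emptyset$; then $R = \langle\langle\FF\rangle\rangle \subseteq S$, and if the inclusion were strict the quotient $S/R$ would be a nontrivial normal subgroup of $G$, hence would contain some $M_j$ and in particular $w_j$, contradicting $S \cap \FF' = \emptyset$. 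Therefore $S = R$, so $\{R\} = \OO_{\FF,\FF'}$ is open and $G$ is isolated.

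I expect no analytic or combinatorial obstacle here; the content is entirely in the correspondence between the two finite halves of a defining pair $(\FF,\FF')$ and the two halves of the conclusion. The one point deserving care is the equivalence, via the correspondence theorem for $F_m \epi G$, between ``$R$ is the unique normal subgroup containing $\FF$ and avoiding $\FF'$'' and ``every nontrivial normal subgroup of $G$ meets the images of $\FF'$'', which is what simultaneously yields finite presentability (the $\FF$-part) and the normal-subgroup condition (the $\FF'$-part).
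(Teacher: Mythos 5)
Your proof is correct. The paper does not prove this proposition itself (it cites Mann, Grigorchuk, and \cite{CGP07}), but your argument is exactly the standard translation through the basis \eqref{eq:Basis-topology} that the paper's own discussion around \eqref{eq:Neighbourhoods-fr-group} alludes to: the $\FF$-half of a defining pair yields finite presentability and the $\FF'$-half yields the finite discriminating family, and conversely.
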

Obvious examples of isolated groups are therefore finite groups
and finitely presentable simple groups.
But there exists many other isolated groups (see \cite[Sec.~5]{CGP07}). 
\smallskip

Proposition \ref{prp:Characterization-isolated-group} contains the observation that every isolated group admits a finite presentation. 
Actually, it is easier to detect finitely presentable groups that are non-condensation 
than infinitely presented groups with this property.
Indeed,  if a group $G$ comes with an epimorphism $\rho \colon F_m \epi G$, and if 
the kernel $R$ of $\rho$ is the normal closure of a finite set, $\RR$ say, then
$$\OO_{\RR,\emptyset}=\{ S\triangleleft F_m \mid  R \subseteq S\}$$
is a neighbourhood of $G$. Thus, if $G$ is finitely presentable, then a basis of neighbourhoods of $G$ can be described as
\begin{equation}
\label{eq:Neighbourhoods-fr-group}
\OO^G_{\FF'} = \{ S\triangleleft G \mid  S \cap \FF' =  \emptyset \},
\end{equation}
where $\FF'$ ranges over finite subsets of $G \smallsetminus\{1\}$. 
The latter description does not refer to any free group mapping onto $G$.

The previous remark implies that an infinite cyclic group and, more generally,  
a finitely presentable, infinite residually finite, just-infinite group has CB-rank 1. A simple instance \cite{McC68} of such a group is consider, for any $n\ge 2$, the semidirect product $\Z^n_0\rtimes\textnormal{Sym}(n)$, where $\Z^n_0$ is the subgroup of $\Z^n$ of $n$-tuples with zero sum, and $\textnormal{Sym}(n)$ is the symmetric group, acting on $\Z^n$ by permutation of coordinates. A more elaborate example, due to Mennicke \cite{Men65}, is $\textnormal{PSL}_n(\Z)$ if $n\ge 3$.
By contrast, most infinitely presented groups are known or expected to be condensation and in particular no well-known infinitely presented group seems to have CB-rank 1;
but such groups exist as can be seen from
\begin{theorem}[Thm.\ \ref{ThCBone}] 
\label{thm:Infinitely-related-CB-rank-1}
There exist infinitely presented groups with Cantor-Bendixson rank 1. Moreover, they can be chosen to be nilpotent-by-abelian.
\end{theorem}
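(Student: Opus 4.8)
The plan is to produce $Q$ as a central quotient of Abels' finitely presented soluble group. Fix a prime $p$ and let $G$ be the group of upper triangular $4\times 4$ matrices over $\Z[1/p]$ whose $(1,1)$- and $(4,4)$-entries equal $1$ and whose $(2,2)$- and $(3,3)$-entries are powers of $p$. Then $G$ is finitely presented by a theorem of Abels, its derived subgroup lies in the unipotent radical and is therefore nilpotent, so $G$ is nilpotent-by-abelian, and its centre $Z=Z(G)$ is the corner subgroup of matrices differing from the identity only in the $(1,4)$-entry, which is isomorphic to $\Z[1/p]$. I set $Q=G/Z$. Being a quotient of a nilpotent-by-abelian group, $Q$ is again nilpotent-by-abelian, so the ``moreover'' part of the statement follows as soon as $Q$ is shown to have CB-rank $1$.

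First I would show that $Q$ is infinitely presented. Write $Z=\bigcup_{n\ge 0}C_n$ as the strictly increasing union of the cyclic central subgroups $C_n=p^{-n}\Z$, so that each $Q_n:=G/C_n$ is finitely presented (one kills a single central element in the finitely presented group $G$) and $Q=\varinjlim_n Q_n$. Fixing a finite marking of $G$ by $I_m$ and letting $R_n,R\triangleleft F_m$ be the kernels of the induced epimorphisms $F_m\epi Q_n$ and $F_m\epi Q$, we have $R=\bigcup_n R_n$ with all inclusions $R_n\subsetneq R_{n+1}$ strict. Were $Q$ finitely presented, $R$ would be the normal closure of finitely many elements, all lying in some $R_N$, whence $R=R_N$ --- contradicting strictness. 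Thus $Q$ is infinitely presented; in particular it is not isolated, so its CB-rank is at least $1$.

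It remains to bound the CB-rank above by $1$, that is, to exhibit a neighbourhood $U$ of $Q$ in $\GG_m$ in which $Q$ is the only non-isolated point. Since $G$ is finitely presented, its relators form a finite set $\mathcal{S}$, and the basic neighbourhood $\OO_{\mathcal{S},\emptyset}$ of $Q$ consists precisely of the marked quotients $G/M$ with $M\triangleleft G$; so near $Q$ every marked group is a quotient of $G$, and proximity to $Q$ means that $M$ contains a prescribed finite window of $Z$ while avoiding a prescribed finite window of $G\smallsetminus Z$. The arithmetic of the situation is favourable: the subgroups of $Z\cong\Z[1/p]$ that meet a large finite window form the cofinite tail $C_n\subset C_{n+1}\subset\cdots\subset Z$ of a chain, and every proper subgroup of $Z/C_n\cong\Z(p^\infty)$ is finite; moreover each non-central root subgroup of $G$ is a copy of $\Z[1/p]$ on which the diagonal torus acts through $p^{\Z}$, so that its torus-invariant subgroups are the $N\Z[1/p]$, with finite quotient $\Z[1/p]/N\Z[1/p]\cong\Z/N$ when $N$ is prime to $p$. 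Using these facts I would show that for $M$ close enough to $Z$ the quotient $G/M$, if distinct from $Q$, is finitely presented and possesses a finite family of minimal nontrivial normal subgroups meeting every nontrivial normal subgroup; by Proposition \ref{prp:Characterization-isolated-group} such a $G/M$ is isolated. Consequently $U\smallsetminus\{Q\}$ consists entirely of isolated points accumulating only at $Q$, which yields CB-rank exactly $1$.

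The step I expect to be the main obstacle is the uniform isolation statement in the last paragraph, since it is precisely here that CB-rank $1$ has to be separated from condensation. Concretely, besides the central chain $Q_n\to Q$ one must also control the quotients obtained by killing a torus-invariant subgroup $N\Z[1/p]$ inside a non-central root subgroup; these perturbations do approach $Z$ as $N\to\infty$, and one has to verify that collapsing such a root subgroup to the finite group $\Z/N$ both restores finite presentability and creates a finite monolith-type normal structure, so that no second non-isolated point --- and a fortiori no uncountable family of $G$-invariant subgroups of the kernel --- survives in a neighbourhood of $Q$. This requires a complete description of the normal subgroups of $G$ lying close to its centre, which rests on the Chevalley commutator relations together with the divisibility lattice of $\Z[1/p]$.
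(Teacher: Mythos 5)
Your first two paragraphs are fine: $A_4$ is finitely presented and nilpotent-by-abelian, and $Q=A_4/\mathcal{Z}(A_4)$ is infinitely presented by the strictly increasing chain of central kernels. The gap is exactly where you anticipated it, and it is fatal rather than merely technical: the group $Q$ you propose does \emph{not} have Cantor-Bendixson rank~$1$. Indeed the paper computes (Remark \ref{remark:CB-rank-Abels-groups}) that $\CB(A_n/\mathcal{Z}(A_n))=n(n+1)/2-3$ for $n\ge 4$, so your $Q$ has rank~$7$, and \S\ref{ssec:Construction-group-CB-rank-1} explains why no plain central quotient of $A_n$ can work: it has too many quotients. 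Concretely, your uniform isolation claim fails as follows. Write $Z=U_{14}$ for the centre and $U_{13}$, $U_{24}$ for the root subgroups at positions $(1,3)$ and $(2,4)$, each a copy of $\Z[1/p]$ normalized by the torus and commuting with the elementary generators modulo $Z$. For each integer $m$ coprime to $p$ the subgroup $M_m=Z\cdot mU_{13}$ is normal in $A_4$, contains $Z$, and satisfies $M_m\cap\FF'=\emptyset$ for any fixed finite $\FF'\subset A_4\smallsetminus Z$ once $m$ is a large enough prime; hence the marked groups $A_4/M_m$ lie in \emph{every} basic neighbourhood $\OO_{\FF,\FF'}$ of $Q$ and converge to $Q$. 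Each $A_4/M_m$ is finitely presented (consistent with Lemma \ref{LemNormAbels}), but it is \emph{not} isolated: the subgroups $M_m\cdot m'U_{24}$, $m'$ coprime to $p$, form an infinite family of distinct normal subgroups converging to $M_m$, so $A_4/M_m$ fails to be finitely discriminable and Proposition \ref{prp:Characterization-isolated-group} does not apply. Thus every neighbourhood of $Q$ contains non-isolated points other than $Q$, and $\CB(Q)\ge 2$; iterating through the remaining root subgroups and the torus directions produces the value~$7$. Collapsing one root subgroup to a finite cyclic group, as you suggest, leaves the other root subgroups intact as $\Z[1/p]$'s whose torus-invariant subgroups $N\Z[1/p]$ have trivial intersection, so no finite monolith-type family can exist.

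The paper's actual construction (Theorem \ref{ThCBone}) has to work considerably harder precisely to kill these transverse perturbations. It takes a subgroup $A\le A_5$ whose centre is $\Z[1/p]^2$, forms a semidirect product with a cyclic group $C$ generated by a block-diagonal matrix whose $2\times 2$ block $M_0$ acts $\Q$-irreducibly on the centre but is diagonalizable over $\Q_p$, and quotients by a subgroup $V_1$ cut out by a $p$-adic eigenline. The $\Q$-irreducibility forces any $C$-invariant normal subgroup not contained in the centre to contain the whole of $\bar Z$ (Lemma \ref{normalabels2}), so such subgroups are expelled from a neighbourhood by a \emph{finite} set $W$ of witnesses; the $p$-adic diagonalizability pins down the few infinite $M_0$-invariant subgroups between $\Z^2$ and $\Z[1/p]^2$ (Lemma \ref{LemEigenLines}); and quotienting first by $Z_0\simeq\Z^2$ makes the residual kernel a union of finite subgroups, each isolated. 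If you want to repair your argument you would need to import some mechanism of this kind; the divisibility lattice of a single $\Z[1/p]$ together with the Chevalley relations cannot by itself supply the required finite discriminating family.
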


\subsubsection{Criteria}
\label{sssec:Criteria-condensation-groups}
%
It is an elementary fact 
that INIP groups are infinitely presented extrinsic condensation groups 
(Remark \ref{rem:INIP-implies-extrinsic-condensation}); 
so the INIP criteria in Section \ref{secip}, in particular  \ref{p_schur}, are also condensation criteria.
Four other criteria for condensation groups will be established.
The first of them covers both finitely presentable and infinitely presented groups: 
\begin{proposition}[Cor.\ \ref{corols}]
\label{prp:Main-criterion-intrinsic-condensation-groups}
Every finitely generated group with a normal non-abelian free subgroup is a condensation group.
\end{proposition}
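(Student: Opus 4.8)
The plan is to show that, for some (and hence, by the marking-independence of the CB-rank cited in the excerpt, any) marking $\rho\colon F_m\epi G$ with kernel $R$, the point $G=F_m/R$ is a condensation point of $\GG_m$, i.e.\ lies in every derived subspace $X^{(\alpha)}$. By standard Cantor--Bendixson theory in the second countable space $\GG_m$ (see \S\ref{sec:Elements-CB-theory}), this amounts to checking that every neighbourhood of $G$ is uncountable. Inspecting the basis \eqref{eq:Basis-topology}, a basic neighbourhood of $R$ has the form $\OO_{\FF,\FF'}$ with $\FF\subseteq R$ and $\FF'\cap R=\emptyset$; writing $E$ for the finite image of $\FF'$ in $G$, every normal subgroup $M\trianglelefteq G$ with $M\cap E=\emptyset$ yields, via $S=\rho^{-1}(M)\supseteq R$, a point of $\OO_{\FF,\FF'}$, and distinct $M$ give distinct $S$. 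Hence it suffices to prove the intrinsic statement: for every finite $E\subseteq G\setminus\{1\}$ there are uncountably many normal subgroups of $G$ disjoint from $E$.

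To establish this I would exploit the free normal subgroup $N$. Since any $M\trianglelefteq G$ contained in $N$ automatically avoids $E\setminus N$, it is enough to produce uncountably many normal subgroups of $G$ lying in $N$ and disjoint from $E_0=E\cap N$. The key idea is to decouple the problem from the possibly complicated conjugation action of $G$ on $N$ by working with \emph{characteristic} subgroups: if $C$ is characteristic in $N$ then, $N$ being normal in $G$, $C$ is invariant under $\Inn(G)|_N$ and so normal in $G$. Now $N'=[N,N]$ is characteristic in $N$ and, as the commutator subgroup of a non-abelian free group on a countable generating set, is itself free of countably infinite rank. Since free groups are residually nilpotent and $E_0$ is finite, I can choose $k$ with $\gamma_k(N')\cap E_0=\emptyset$; the group $\Phi=\gamma_k(N')$ is again free of countably infinite rank and is characteristic in $N$.

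It then remains to exhibit uncountably many characteristic subgroups of $\Phi$: these are characteristic in $N$ as well (a characteristic subgroup of a characteristic subgroup is characteristic), they lie inside $\Phi$ and therefore avoid $E_0$, and they are normal in $G$ by the remark above. Here I would invoke the fact that there exist $2^{\aleph_0}$ varieties of groups (Ol'shanskii, Vaughan-Lee, Adian). Since the verbal subgroups $\mathfrak{V}(\Phi)$ of the free group $\Phi$ of countably infinite rank are in bijection with varieties $\mathfrak{V}$, and verbal subgroups are fully invariant, hence characteristic, this produces $2^{\aleph_0}$ pairwise distinct characteristic subgroups of $\Phi$, giving $2^{\aleph_0}$ normal subgroups of $G$ disjoint from $E$ and completing the argument.

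I expect the main obstacle to be the tension between the two requirements imposed on the subgroups $M$ — that there be uncountably many of them, yet that they all avoid a prescribed finite set. Uncountability genuinely rests on the non-abelianness of $N$: the abelianisation $N^{\ab}$, carrying its induced $G$-action, is in general far too rigid to possess uncountably many invariant subgroups (it may, for instance, be a Noetherian module over a countable ring), which is exactly why passing to the free groups $N'$ and $\Phi$ and importing the cardinality of the lattice of varieties is essential. Residual nilpotence of free groups is then precisely the ingredient that reconciles this abundance with the avoidance condition, and using characteristic rather than merely normal subgroups is what frees the whole construction from any analysis of the $G$-action on $N$.
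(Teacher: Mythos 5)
Your argument is correct and is essentially the paper's own proof (Corollary \ref{corols} together with Lemma \ref{condei}): both reduce to showing that every neighbourhood of $\{1\}$ in the second countable space $\mathcal{N}(G)$ is uncountable, and both obtain the uncountably many normal subgroups of $G$ avoiding a given finite set by applying the Adian--Olshanskii--Vaughan-Lee theorem (Theorem \ref{ols}, i.e.\ the continuum of varieties realized as fully characteristic, hence verbal, subgroups of a non-abelian free group) inside a characteristic free subgroup of $N$ chosen deep enough to miss that finite set. The only cosmetic difference is that the paper descends along the derived series $N^{(n)}$ using Levi's theorem where you descend along $\gamma_k(N')$ using residual nilpotence of free groups.
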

The proposition is actually an elementary consequence of the following difficult result independently due to Adian, Olshanskii, and Vaughan-Lee (see \S\ref{aovl}): any non-abelian free group has uncountably many characteristic subgroups. As a simple application, the non-solvable Baumslag-Solitar groups
\[
\textnormal{BS}(m,n)=\langle t,x \mid tx^mt^{-1}=x^n\rangle\qquad (|m|,|n|\ge 2)
\]
are condensation groups. 

Proposition \ref{prp:Main-criterion-intrinsic-condensation-groups} 
will be a a stepping stone in the proof of
\begin{theorem}[Cor.\ \ref{crl:Corollary-for-ii}]%
\label{thm:Non-contraction}
Let $G$ be a finitely generated group with an epimorphism $\pi:G\epi\Z$. 
Suppose that $\pi$ does not split over a finitely generated subgroup of $G$, 
i.e.\, there is no decomposition of $G$ as an HNN-extension over a finitely generated subgroup 
for which the associated epimorphism to $\Z$ is equal to $\pi$.
Then $G$ is an extrinsic condensation group (and is thus infinitely presented).
\end{theorem}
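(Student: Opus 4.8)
The plan is to verify the definition of extrinsic condensation directly. Given an arbitrary finitely presented group $\hat G$ with an epimorphism $\phi\colon\hat G\epi G$ and kernel $K$, I will produce an infinite \emph{independent} family of subgroups of $K$ that are normal in $\hat G$. Since for an independent family $(N_i)_{i\in I}$ the assignment $J\mapsto N_J$ is injective, an infinite such family yields $2^{\aleph_0}$ distinct normal subgroups of $\hat G$ contained in $K$, which is exactly extrinsic condensation; the concluding clause ``and is thus infinitely presented'' is then the implication recorded just after the definition of extrinsic condensation.

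First I would pass to a convenient model of $\hat G$. Put $\hat\pi=\pi\circ\phi\colon\hat G\epi\Z$ and choose $\hat t$ with $\hat\pi(\hat t)=1$. By the structure theorem of Bieri and Strebel, a finitely presented group mapping onto $\Z$ is an HNN-extension over a finitely generated subgroup whose stable letter realizes the character; thus $\hat G=\HNN(P,A,B;\hat t)$ with $P,A,B$ finitely generated and $\hat\pi(P)=0$. Writing $\bar P,\bar A,\bar B$ for the images in $G$ and $t=\phi(\hat t)$, the HNN-relations persist, so the universal property gives a surjection $q\colon\mathbb H:=\HNN(\bar P,\bar A,\bar B;t)\epi G$ and a factorization $\phi=q\circ\psi$ with $\psi\colon\hat G\epi\mathbb H$. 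Were $q$ injective, $G$ would be an HNN-extension over the finitely generated subgroup $\bar A$ with associated epimorphism $\pi$, contradicting the hypothesis; hence $M:=\ker q\neq1$. By the correspondence theorem, $\hat G$-normal subgroups of $K$ containing $\ker\psi$ match $\mathbb H$-normal subgroups of $M$ (and pulling back along $\psi$ preserves distinctness), so it suffices to exhibit an infinite independent family of $\mathbb H$-normal subgroups of $M$. This replaces $\hat G$ by the single HNN-extension $\mathbb H$ built from finitely generated data.

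The heart of the matter is to convert the non-splitting hypothesis into infinitely many independent relators. I would filter $\ker\pi$ as the strictly increasing union of the finitely generated subgroups $N_n=\langle\, t^k\bar P\,t^{-k} : |k|\le n\,\rangle$ and track the edge-group overlaps that govern HNN normal form in $\mathbb H$. Failure to split over any finitely generated subgroup forces these overlaps not to stabilise, and I expect this to yield, for infinitely many $n$, a reduced HNN-word $r_n\in M$ that ``lives at level $n$'' (its $t$-syllables reaching exactly to $\pm n$) and that is genuinely new, in that $r_n\notin\langle\langle\, r_m : m\neq n\,\rangle\rangle_{\mathbb H}$. Setting $R_n:=\langle\langle r_n\rangle\rangle_{\mathbb H}\subseteq M$, this last condition makes $J\mapsto R_J$ injective on subsets of $\N$, giving $2^{\aleph_0}$ many $\mathbb H$-normal subgroups of $M$; transporting the family back along $\psi$ plants an infinite independent family inside $K$, as required.

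The main obstacle is precisely this independence statement: certifying that $r_n$ is not a consequence of the relators at the other levels. I would establish it by a Britton's-lemma argument for the action of $\mathbb H$ on the Bass--Serre tree of its HNN-splitting, showing that after imposing only the relations $r_m$ with $m\neq n$ the element $r_n$ is still detectable on the tree (say through its interaction with a well-chosen edge stabiliser), since those relations are supported at levels disjoint from $n$. Making ``supported at level $n$'' precise and stable under the remaining relations — that is, controlling how the normal closures interact across the filtration $(N_n)$ — is the delicate point; everything else is bookkeeping with the universal property of HNN-extensions and the elementary cardinality count for infinite independent families.
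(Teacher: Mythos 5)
Your reduction is sound and close in spirit to the paper's: applying \cite[Thm.~A]{BiSt78} to the finitely presented cover $\hat G$, pushing the splitting down to $\mathbb{H}=\HNN(\bar P,\bar A,\bar B;t)\epi G$, and observing that non-splitting of $\pi$ forces $M=\ker q\neq 1$ is all correct, as is the dictionary between $\mathbb{H}$-normal subgroups of $M$ and $\hat G$-normal subgroups of $K$ containing $\ker\psi$. The gap is the entire second half. First, you never establish that $M$ is infinitely generated as a normal subgroup of $\mathbb{H}$ --- only that it is nontrivial --- and that fact already requires comparing $\mathbb{H}$ with the larger HNN-extensions obtained by enlarging the vertex group (this is precisely the inductive-limit argument in the proof of Theorem \ref{thm:Structure-theorem}: a finite normal generating set of $M$ would die in one of those larger pieces and yield a splitting of $\pi$). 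Second, and more seriously, even knowing that $M$ is infinitely normally generated cannot by itself produce an infinite independent family of relators: the paper's own examples $A_n/\mathcal{Z}(A_n)$ (Corollary \ref{crl:Countable-CB-rank}) are infinitely presented yet have only countably many normal subgroups between the kernel and the trivial subgroup, so ``new relators at infinitely many levels'' plus Britton's-lemma bookkeeping is not a viable route to independence, let alone to uncountably many normal subgroups; some genuinely non-elementary input is unavoidable.

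The paper's input is twofold: (a) the kernel in question meets every conjugate of the vertex group trivially, hence acts freely on the Bass--Serre tree and is free, and it is of infinite rank because it is infinitely normally generated --- so the kernel of any finitely presented cover of $G$ has a non-abelian free quotient ($G$ is largely related in the sense of Definition \ref{dlr}); and (b) the Adian--Olshanskii--Vaughan-Lee theorem (Theorem \ref{ols}), by which a non-abelian free group has an infinite independent family of fully characteristic subgroups. Pulling these back through Corollary \ref{ontofree} gives uncountably many fully characteristic subgroups of $K$, and such subgroups are automatically normal in $\hat G$, which is what extrinsic condensation demands. Your reduction can be completed exactly along these lines --- note that $M\cap g\bar Pg^{-1}=1$ for all $g$ because $q$ restricts to the inclusion on $\bar P$, so $M$ is free --- but neither the freeness of $M$, nor its non-abelianness, nor any substitute for the Adian--Olshanskii--Vaughan-Lee theorem appears in your proposal, and these are precisely the points where the content of the theorem lives.
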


Theorem \ref{thm:Non-contraction} is based on Theorem \ref{thm:Structure-theorem} which is an extension of \cite[Thm.~A]{BiSt78}; the latter states that every homomorphism from a finitely presentable group onto the group $\Z$ splits over a finitely generated subgroup.

\subsubsection{Link with the geometric invariant}%
\label{sssec:Link-with-Sigma1}
If $G$ is not isomorphic to a non-ascending HNN-extension over any finitely generated subgroup, Theorem \ref{thm:Non-contraction} can be conveniently restated in terms of the {\em geometric invariant} $\Sigma(G)=\Sigma_{G'}(G)$ introduced in \cite{BNS87}. We shall view this invariant as an open subset of the real vector space $\Hom(G,\R)$ (see \S\ref{subns} for more details). Writing $\Sigma^c(G)$ for the complement of $\Sigma(G)$ in $\Hom(G,\R)$, we have

\begin{corollary}[Cor.\ \ref{crl:Criterion-for-extrinsic-condensation}]
\label{crl:Non-ascending-HNN-extension-and-condensation-groups}
 Let $G$ be a finitely generated group and assume that $G$ is not isomorphic to a non-ascending HNN-extension over any finitely generated subgroup (e.g., $G$ has no non-abelian free subgroup).
 If $\Sigma^c(G)$ contains a rational line 
 then $G$ is an extrinsic condensation group (and is thus infinitely presented). 
 \end{corollary}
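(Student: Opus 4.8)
The plan is to deduce the statement directly from Theorem~\ref{thm:Non-contraction} by producing a single epimorphism onto $\Z$ that splits over no finitely generated subgroup. First I would unwind the hypothesis: since $\Sigma^c(G)$ is a union of rays and contains a \emph{rational} line, there is an epimorphism $\pi\colon G\epi\Z$ whose class $[\pi]$ and antipode $[-\pi]$ both lie in $\Sigma^c(G)$ (rationality is exactly what guarantees a discrete character, hence a genuine surjection onto $\Z$). The whole proof then reduces to checking that this $\pi$ does not split over any finitely generated subgroup; Theorem~\ref{thm:Non-contraction} delivers extrinsic condensation, and with it infinite presentability.

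For the splitting I would argue by contradiction, using the standard dichotomy for HNN-decompositions. Suppose $\pi$ splits, so that $G\cong\langle H,t\mid tAt^{-1}=B\rangle$ over a finitely generated edge group $A$, with base $H$ and associated character $\pi$ (thus $\pi|_H=0$, $\pi(t)=1$). Either the decomposition is non-ascending, i.e.\ $A\ne H\ne B$ --- but then $G$ is a non-ascending HNN-extension over a finitely generated subgroup, which the hypothesis forbids --- or one edge group coincides with the base, so that the decomposition is ascending over the finitely generated group $H$. In the latter case the Bieri--Neumann--Strebel correspondence between $\Sigma(G)$ and ascending HNN-decompositions over finitely generated bases (see \S\ref{subns} and \cite{BNS}) forces $[\pi]\in\Sigma(G)$ or $[-\pi]\in\Sigma(G)$, according to the direction of the ascent. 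This contradicts $\R\,\pi\subseteq\Sigma^c(G)$. Hence no splitting exists and Theorem~\ref{thm:Non-contraction} applies.

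I expect the only genuinely delicate point to be the bookkeeping in this second step: one must be sure that the two cases (ascending versus non-ascending) are exhaustive for an HNN-splitting over a finitely generated subgroup, and that the invariant $\Sigma$ detects precisely the ascending ones --- the non-ascending splittings being invisible to $\Sigma$ are exactly what make the separate hypothesis on $G$ necessary, as the free group $F_2$ (empty $\Sigma$, yet every rational character splits non-ascendingly) already illustrates. Finally, to justify the parenthetical ``e.g.'' I would invoke the classical fact that a non-ascending HNN-extension, having both edge groups proper in the base, contains a non-abelian free subgroup; a group with no such subgroup therefore admits no non-ascending HNN-decomposition over any subgroup, so the extra hypothesis on $G$ is automatic in that case.
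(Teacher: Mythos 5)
Your argument is correct and follows the paper's own route: both use the rationality of the line to get an epimorphism $\pi\colon G\epi\Z$ with $\pm\pi\in\Sigma^c(G)$, rule out ascending splittings via the characterization of $\Sigma$ in terms of contraction into a finitely generated subgroup (Proposition \ref{prp:Link-with-BS78}), rule out non-ascending splittings by the standing hypothesis, and then invoke Corollary \ref{crl:Corollary-for-ii} (equivalently Theorem \ref{thm:Non-contraction}). Your justification of the parenthetical ``e.g.'' likewise matches the paper's Proposition \ref{cnas}, which records that a non-ascending splitting forces a non-abelian free subgroup.
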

 This corollary applies to several classes of groups $G$ where  $\Sigma^c(G)$ is known,
 such as finitely generated metabelian groups or groups of piecewise linear homeomorphisms.

Corollary \ref{crl:Non-ascending-HNN-extension-and-condensation-groups} and results about the geometric invariant of metabelian groups proved in \cite{BiSt80} and \cite{BiGr84}, allow one to give a description of the finitely generated 
centre-by-metabelian groups that are condensation groups:
\begin{corollary}[Cor.\ \ref{crl:Fp-centre-by-metabelian-groups}]
\label{crl:Center-by-metabelian-condensation-groups}
A finitely generated centre-by-metabelian group is a condensation group if, and only if, it is infinitely presented.
\end{corollary}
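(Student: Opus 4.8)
The plan is to reduce everything to the metabelianisation $\bar G = G/G''$, which is a finitely generated metabelian group, and then to feed the outcome into Corollary~\ref{crl:Non-ascending-HNN-extension-and-condensation-groups}. Throughout I use that a centre-by-metabelian group is soluble (its derived length is at most $3$, as $G''$ is central), so $G$ has no non-abelian free subgroup and in particular is not isomorphic to a non-ascending HNN-extension over any finitely generated subgroup; thus the standing hypothesis of Corollary~\ref{crl:Non-ascending-HNN-extension-and-condensation-groups} on the HNN-structure holds automatically, and it remains only to control $\Sigma^c(G)$. I first record two reduction facts. $(i)$ \emph{If $\bar G$ is finitely presented, then so is $G$:} since $G$ is finitely generated and $\bar G$ finitely presented, the kernel $G''$ is finitely generated as a normal subgroup, and being central it is finitely generated as an abelian group, hence finitely presented; an extension of a finitely presented group by a finitely presented group is finitely presented. $(ii)$ Since $G''\le G'$ we have $\Hom(\bar G,\R)=\Hom(G,\R)$ with the same rational structure, and $\Sigma^1$ is monotone under this quotient, so $\Sigma^c(\bar G)\subseteq\Sigma^c(G)$.

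For the implication ``infinitely presented $\Rightarrow$ condensation'', the contrapositive of $(i)$ shows that if $G$ is infinitely presented then $\bar G$ is infinitely presented. By the Bieri--Strebel finite-presentability criterion for finitely generated metabelian groups \cite{BiSt78}, $\Sigma^c(\bar G)$ then contains a pair of antipodal points $\{v,-v\}$. As $\Sigma^c(\bar G)$ is a finite union of rationally defined polyhedral cones (Bieri--Groves), writing $v\in C_i$ and $-v\in C_j$, the rational polyhedral cone $C_i\cap(-C_j)$ contains $v\neq 0$, hence a nonzero \emph{rational} point $w$; then $w$ and $-w$ both lie in $\Sigma^c(\bar G)$, so the rational line $\R w$ lies in $\Sigma^c(\bar G)$. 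By $(ii)$ this line lies in $\Sigma^c(G)$, and Corollary~\ref{crl:Non-ascending-HNN-extension-and-condensation-groups} yields that $G$ is an extrinsic condensation group.

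For the converse I argue the contrapositive: assuming $G$ finitely presented, I show $G$ is non-condensation. The key first step is that $\bar G$ must then be finitely presented: otherwise, by the previous paragraph $\Sigma^c(G)$ would contain a rational line, and Corollary~\ref{crl:Non-ascending-HNN-extension-and-condensation-groups} would force $G$ to be infinitely presented, a contradiction. Hence $\bar G$ is a finitely presented metabelian group and $G''$ is finitely generated and central. By the computation of the Cantor--Bendixson rank of finitely presented metabelian groups in \cite{CorAF}, $\bar G$ is non-condensation, i.e. $\CB(\bar G)$ is a genuine ordinal. It then remains to transfer non-condensation from $\bar G$ up to $G$ along the finitely generated central extension $1\to G''\to G\to\bar G\to 1$.

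This transfer is the step I expect to be the main obstacle, since condensation is neither inherited nor reflected by arbitrary quotients, so one cannot simply compare $G$ with $\bar G$. What I would prove instead is that a finitely generated central extension by a finitely generated abelian group cannot raise the Cantor--Bendixson rank to $\infty$: because $G$ is finitely presented, the neighbourhood description~\eqref{eq:Neighbourhoods-fr-group} applies, so every marked group near $G$ is a quotient of $G$; its image in $\GG_m$ deforms as a quotient of $\bar G$, of which there are only boundedly many below rank $\CB(\bar G)$, while the finitely generated central kernel $G''$ contributes only finitely many further degrees of freedom. This should bound $\CB(G)$, giving $G$ non-condensation and completing the proof. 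The Bieri--Groves rationality input and this central-extension transfer are the two places where genuine care is required; the remaining steps are formal consequences of Corollary~\ref{crl:Non-ascending-HNN-extension-and-condensation-groups}.
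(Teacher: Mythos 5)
Your forward direction (``infinitely presented $\Rightarrow$ condensation'') is correct and is essentially the paper's argument: reduce to $\bar G=G/G''$, note that $\bar G$ finitely presented would force $G$ finitely presented (since $G''$ is then finitely generated as a normal subgroup, hence as a central subgroup a finitely generated abelian group), extract a rational line in $\Sigma^c(\bar G)\subseteq\Sigma^c(G)$ from Bieri--Strebel plus Bieri--Groves rationality, and apply Corollary \ref{crl:Non-ascending-HNN-extension-and-condensation-groups} using solvability to rule out non-ascending splittings. (Minor point: the finite presentability criterion is Theorem A(ii) of \cite{BiSt80}, not \cite{BiSt78}.)

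The converse direction, however, has a genuine gap exactly where you flag it: the ``transfer'' of non-condensation from $\bar G$ to $G$ along the central extension $G''\mono G\epi\bar G$ is only sketched, and the sketch as written does not work. The input you propose to use --- that $\bar G$ is non-condensation, via the CB-rank computation of \cite{CorAF} --- is too weak for the counting you gesture at: knowing that $\{1\}$ avoids the condensation part of $\mathcal N(\bar G)$ does not by itself bound the normal subgroups of $G$ lying over a given subgroup of $G''$, and the phrase ``finitely many further degrees of freedom'' is not an argument. What is actually needed is the structural property \emph{max-n}. The paper closes the gap via Theorem \ref{prp:Results-for-solvable-groups}(\ref{fsb}) (i.e.\ \cite[Thm.~5.7]{BiSt80}): if $G/G''$ is finitely presented then $G/\gamma_3(G')$ satisfies max-n, and for a centre-by-metabelian group $\gamma_3(G')=[G'',G']=\{1\}$, so $G$ itself satisfies max-n, hence has only countably many normal subgroups and, being finitely presented, is not of intrinsic condensation and therefore not of condensation by Lemma \ref{condei}(\ref{ic}). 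Alternatively, your counting idea can be made rigorous with the paper's Lemma \ref{lemcos} applied to $G''\mono G\epi\bar G$: the finitely generated abelian group $G''$ has countably many subgroups and $\bar G$, being finitely generated metabelian, satisfies max-n by P.~Hall; but either way the decisive ingredient is max-n, not the Cantor--Bendixson rank of $\bar G$. As it stands your proof of the converse is incomplete.
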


The Cantor-Bendixson rank of finitely presentable metabelian groups was computed in \cite{Cor11b}. 
Corollary \ref{crl:Center-by-metabelian-condensation-groups} extends this computation to the infinitely presented ones while
answering \cite[Qst.~1.7]{Cor11b} .

The second principal application of Corollary \ref{crl:Non-ascending-HNN-extension-and-condensation-groups} 
deals with groups of piecewise linear homeomorphisms of the real line.
For some of these groups, the $\Sigma$-invariant has been computed in \cite[Thm.\;8.1]{BNS87}. 
These computations apply, in particular, to Thompson's group $F$, 
the group consisting of all increasing piecewise linear homeomorphisms of the unit interval 
whose points of non-differentiability are dyadic rational numbers and whose slopes are integer powers of 2. 
We denote by  $\chi_0$ (resp. by $\chi_1$) the homomorphism from F to $\R$
which maps $f$ to the binary logarithm of its slope at 0 (resp. at 1). 
The homomorphisms $\chi_0$ and $\chi_1$ form a basis of $\Hom(F,\Z)$; 
so every normal subgroup of $F$ with infinite cyclic quotient is the kernel 
$N_{p,q} = \ker(p\chi_0 - q \chi_1)$, with ($(p,q) \in \Z^2 \smallsetminus \{(0,0)\}$. 
The discussion in \cite{BNS87} describes  
which of the groups $N_{p,q}$ are finitely generated, or finitely presentable, 
and how their invariants $\Sigma(N_{p,q})$ look like. 
Using Corollary \ref{crl:Non-ascending-HNN-extension-and-condensation-groups}  and methods from \cite{CFP96}, 
one can describe how the groups $N_{p,q}$ sit inside the space of marked groups.

\begin{corollary} 
\label{crl:Normal-subgroups-in-F}
\begin{enumerate}[(1)]
\item If $p\cdot q = 0$, then $N_{p,q}$ is infinitely generated,
\item  if $p\cdot q > 0$, then $N_{p,q}$  is finitely presentable and isolated,
\item  if $p\cdot q < 0$, then $N_{p,q}$ is a finitely generated extrinsic condensation group (and thus infinitely presented).  
\end{enumerate}
\end{corollary}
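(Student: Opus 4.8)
The plan is to treat all three cases through the description of $N_{p,q}$ as an extension of $\Z$ by the simple group $F'=[F,F]$, reading off finite generation, finite presentation and the invariant $\Sigma$ from the computations of \cite{BNS} for $F$, and reserving Corollary~\ref{crl:Non-ascending-HNN-extension-and-condensation-groups} for the condensation case. I first record the standard facts about $F$ (see \cite{CFP96}): the abelianisation $F^{\ab}$ is free of rank~$2$ with $\chi_0,\chi_1$ a dual basis, the commutator subgroup $F'=\ker(F\epi F^{\ab})$ is simple and non-abelian, every nontrivial normal subgroup of $F$ contains $F'$, and $F$ has no non-abelian free subgroup. For $(p,q)\neq(0,0)$ the character $p\chi_0-q\chi_1$ vanishes on $F'$, so $F'\subseteq N_{p,q}$ and $N_{p,q}/F'$ is the rank-one subgroup $\ker(p\,\cdot-q\,\cdot)$ of $\Z^2$; hence $N_{p,q}/F'\cong\Z$. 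As $F'$ is perfect, $[N_{p,q},N_{p,q}]=F'$, so $N_{p,q}^{\ab}\cong\Z$ and $\Hom(N_{p,q},\R)=\R\lambda$ is one-dimensional; when $pq\neq0$ one may take $\lambda=\chi_0|_{N_{p,q}}$, whose kernel is exactly $F'$. In particular the character sphere of $N_{p,q}$ is the two-point set $\{[\lambda],[-\lambda]\}$, and a rational line in $\Hom(N_{p,q},\R)$ is the whole space.

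For (i) I would invoke the finite-generation criterion of \cite{BNS}: $N_{p,q}=\ker\psi$ with $\psi=p\chi_0-q\chi_1$ is finitely generated if and only if both $[\psi]$ and $[-\psi]$ lie in $\Sigma(F)$. Since $\Sigma^c(F)$ consists of the two points $[\chi_0],[\chi_1]$ lying on the two coordinate axes of the character circle (\cite[Thm.~8.1]{BNS}), one of the rays $[\pm\psi]$ meets $\Sigma^c(F)$ exactly when the direction $(p,-q)$ lies on a coordinate axis, that is when $pq=0$. This proves (i) and, at the same time, shows that $N_{p,q}$ is finitely generated whenever $pq\neq0$.

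For (ii) the finite presentability of $N_{p,q}$ when $pq>0$ is part of the computation in \cite[Thm.~8.1]{BNS} (via the normal forms of \cite{CFP96}), so it remains only to establish isolatedness through Proposition~\ref{prp:Characterization-isolated-group}. The key point is that $F'$ is the monolith of $N_{p,q}$: if $M\trianglelefteq N_{p,q}$ is nontrivial, then $M\cap F'\trianglelefteq F'$ equals $1$ or $F'$ by simplicity; in the former case $[M,F']\subseteq M\cap F'=1$, so $M$ centralises $F'$, but $C_F(F')$ is a normal subgroup of $F$ meeting $F'$ trivially and is therefore trivial, forcing $M=1$; thus $F'\subseteq M$ in every case. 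Taking the one-element family $\{F'\}$ in Proposition~\ref{prp:Characterization-isolated-group} shows that the finitely presented group $N_{p,q}$ is isolated.

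For (iii) I would apply Corollary~\ref{crl:Non-ascending-HNN-extension-and-condensation-groups}. Its standing hypothesis holds because $N_{p,q}\leq F$ has no non-abelian free subgroup, and by the first paragraph a rational line in $\Sigma^c(N_{p,q})$ is nothing but the statement $\Sigma(N_{p,q})=\emptyset$, i.e.\ $[\lambda],[-\lambda]\in\Sigma^c(N_{p,q})$; this is precisely the outcome of the $\Sigma$-computation for $N_{p,q}$ in \cite[Thm.~8.1]{BNS} when $pq<0$, and the corollary then yields that $N_{p,q}$ is an extrinsic condensation group, hence infinitely presented. The real work, and the only place the sign of $pq$ enters, is this determination of $\Sigma(N_{p,q})$: one must decide for each of the two directions $\pm\lambda$ whether $N_{p,q}$ is an ascending HNN extension with finitely generated base over $F'$, which amounts to analysing how a generator of $N_{p,q}/F'\cong\Z$ acts by conjugation on $F'$ through the germs at the endpoints $0$ and $1$. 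The sign of $pq$ governs whether this action is ascending in both directions (so $\Sigma(N_{p,q})=\emptyset$, condensation) or in exactly one (so $\Sigma(N_{p,q})$ is a single point, hence non-condensation, consistent with the isolatedness found in (ii)); carrying out this germ analysis with the PL normal forms of \cite{CFP96} is the technical heart of the argument, everything else being formal.
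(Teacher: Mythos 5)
Your proposal is correct in all three cases and follows the same overall architecture as the paper (monolith argument for isolation, $\Sigma$-invariant plus the non-ascending-splitting criterion for condensation), but it systematically replaces the paper's direct verifications by appeals to \cite{BNS}. For (i) and the finite generation of $N_{p,q}$ when $pq\neq 0$, you use Theorem~B1 of \cite{BNS} together with $\Sigma^c(F)=[\chi_0]\cup[\chi_1]$; the paper's Lemma~\ref{nfg} instead constructs explicit generators of the point stabilizer $N_{1/2}$ via the squeezing endomorphisms $i_0,i_1$ and $2$-homogeneity, and relegates your conceptual route to Remark~\ref{rcp}, so here you are giving exactly the alternative proof the authors mention. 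Your isolation argument (that $F'$ is the monolith of $N_{p,q}$, using simplicity of $F'$ and triviality of its centralizer) is the same as Lemma~\ref{ntpfd}. The divergence that matters is in the two places you defer to \cite[Thm.~8.1]{BNS}: for finite presentability when $pq>0$ the paper gives a self-contained argument exhibiting $N_{p,q}$ as an ascending HNN-extension of the Thompson group $F(I)$ of a subinterval $I$ with $t(I)\subset I$; and for $pq<0$ the crucial fact $\Sigma^c(N_{p,q})=\Hom(N_{p,q},\R)$ is proved directly in Lemma~\ref{bnsth} by a short slope argument (every element of a finite subset $S$ of $F_\chi$ is linear with slope $\ge 1$ on some $[0,\varepsilon]$, so the subgroups $H_n$ of elements trivial on $[0,2^{-n}\varepsilon]$ give an exhaustion of $F'$ by proper $M_S$-subgroups), applied to both $(\chi_0)|_N$ and $(\chi_1)|_N$, the sign of $pq$ entering only to decide whether these two restrictions span one or both half-lines of $\Hom(N_{p,q},\R)$. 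You correctly identify this $\Sigma$-computation as the technical heart and sketch the right mechanism (the germ/slope behaviour at the two endpoints), but you do not carry it out; since the paper's introduction itself attributes the description of $\Sigma(N_{p,q})$ to \cite{BNS}, your citation is defensible, though if you want a self-contained proof you should reproduce the short argument of Lemma~\ref{bnsth} rather than invoke Theorem~8.1 of \cite{BNS} as a black box.
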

Here is a third application, where the group contains non-abelian free subgroups. 
\begin{corollary}[Ex.\ \ref{sla}]\label{isla}
Let $A=\Z[x_n:n\in\Z]$ be the polynomial ring in infinitely many variables. Consider the ring automorphism $\phi$ defined by $\phi(x_n)=x_{n+1}$, which induces a group automorphism of the matrix group $\SL_d(A)$. Then for all $d\ge 3$, the semidirect product $\SL_d(A)\rtimes_\phi\Z$ is finitely generated and of extrinsic condensation (and thus infinitely presented).
\end{corollary}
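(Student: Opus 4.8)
The plan is to prove the two assertions separately: that $G=\SL_d(A)\rtimes_\phi\Z$ is finitely generated, and that the canonical projection $\pi\colon G\epi\Z$ does not split over a finitely generated subgroup. Once the latter is established, Theorem~\ref{thm:Non-contraction} immediately yields that $G$ is an extrinsic condensation group, hence infinitely presented.

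For finite generation, let $t\in G$ be a generator of the acting copy of $\Z$, so that $t\,e_{ij}(a)\,t^{-1}=e_{ij}(\phi(a))$ and in particular $t\,e_{ij}(x_n)\,t^{-1}=e_{ij}(x_{n+1})$. Since $d\ge 3$, Suslin's theorem gives $\SL_d(R)=\mathrm{E}_d(R)$ for each polynomial ring $R=\Z[x_i:|i|\le m]$, and as every matrix involves only finitely many variables we obtain $\SL_d(A)=\mathrm{E}_d(A)=\bigcup_m \mathrm{E}_d(\Z[x_i:|i|\le m])$. Using additivity $e_{ij}(a)e_{ij}(b)=e_{ij}(a+b)$ together with the Steinberg relations $[e_{ij}(a),e_{jk}(b)]=e_{ik}(ab)$ (valid for $d\ge 3$), one checks that $\mathrm{E}_d(A)$ is generated by the finitely many elementary matrices $e_{ij}(1)$ and $e_{ij}(x_0)$ together with all of their $t$-conjugates; hence $G$ is generated by $t$ and these finitely many matrices. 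I also record that $\mathrm{E}_d(A)$ is perfect for $d\ge 3$ and that, being a strictly increasing union of proper subgroups, $\SL_d(A)$ is \emph{not} finitely generated.

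For the non-splitting statement I would argue by contradiction. A splitting of $\pi$ over a finitely generated subgroup exhibits $G$ as an HNN-extension over a finitely generated group $C$ whose associated map to $\Z$ is $\pi$; passing to the Bass--Serre tree $T$, the kernel $N=\ker\pi=\SL_d(A)$ acts on $T$ with finitely generated edge stabilisers and with quotient $N\backslash T$ a line translated by the deck group $\Z=G/N$. The key external input is that each finite-variable piece $S_m=\SL_d(\Z[x_i:|i|\le m])$ has Kazhdan's property (T) (Ershov--Jaikin-Zapirain), hence property (FA), so $S_m$ fixes a vertex of $T$. As $N=\bigcup_m S_m$ is then an increasing union of elliptic subgroups, the nested nonempty subtrees $\mathrm{Fix}(S_m)$ force $N$ either to be elliptic or to fix an end $\xi$ of $T$. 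If $N$ is elliptic it equals a vertex group, the decomposition degenerates, and $N=C$ becomes finitely generated, a contradiction. If $N$ fixes a second end as well, it stabilises a line, and since $\mathrm{E}_d(A)$ is perfect its image in the metabelian automorphism group of that line is trivial, so $N$ fixes the line pointwise and is again elliptic, a contradiction. Thus $N$ must fix a \emph{unique} end $\xi$; but $N\trianglelefteq G$, so $t\xi$ is again $N$-fixed and hence $t\xi=\xi$, whence all of $G$ fixes $\xi$ and the HNN-extension is ascending, giving $N=\bigcup_{n\ge 0}t^{-n}Bt^{n}$ (or $\bigcup_{n\ge 0}t^{n}Bt^{-n}$) for a finitely generated base $B$.

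This last case is where the geometry of the shift $\phi$ enters, and I expect it to be the main point to get right. A finitely generated $B$ involves only variables $x_i$ with $|i|\le m$ for some $m$; since conjugation by $t^{\pm1}$ differs from the shift $\phi^{\pm1}$ only by an inner automorphism supported on boundedly many variables, each conjugate $t^{-n}Bt^{n}$ involves only variables $x_i$ with $i$ bounded on one side. Consequently the increasing union omits an entire half-line of variables and cannot equal $\SL_d(A)$, the desired contradiction. The principal obstacle is therefore the tree-rigidity of the \emph{infinitely} generated group $\SL_d(A)$: property (FA) of the finite pieces only rules out an irreducible action, so one must still exclude the parabolic (fixed-end) case, and it is precisely the one-sided reach of the variable shift, combined with the normality of $N$ and the perfectness of $\mathrm{E}_d(A)$, that closes this gap.
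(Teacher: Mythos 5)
Your proof is correct, and it rests on the same two pillars as the paper's own argument (Example \ref{sla}): fixed-point rigidity on trees for the finite-variable subgroups $\SL_d(\Z[x_i:|i|\le m])$, and the one-sided reach of the shift $\phi$ to kill the ascending case; the finite-generation argument is essentially identical. Where you diverge is in the organization of the non-splitting step. The paper does not run a Bass--Serre analysis from scratch: it rules out ascending splittings by observing that $\phi^{\pm 1}$ does not contract into any finitely generated subgroup (invoking its characterization of ascending splittings via contracting automorphisms), and it rules out non-ascending splittings by checking that condition (\ref{nas5}) of Proposition \ref{cnas} fails, since every element of $\EL_d(A)$ lies in some $\EL_d(\Z[x_i:|i|\le n])$, which has property (FA) --- the paper cites Culler--Vogtmann here rather than Ershov--Jaikin-Zapirain's property (T), but the rigidity input is the same --- so every action on a tree is by elliptic isometries and there cannot be two hyperbolics without a common end. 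Your unified tree argument re-proves in this special case exactly what those two general statements package up: the dichotomy ``an increasing union of elliptic subgroups is elliptic or fixes an end'', the degeneration of the elliptic case via normality, the exclusion of an invariant line via perfectness of $\EL_d(A)$, and the standard fact that a globally fixed end forces the HNN-extension to be ascending. The paper's route buys brevity, since the case analysis is absorbed into ready-made propositions; yours buys self-containedness and makes transparent why only the ascending case survives the rigidity input, which is then dispatched by the variable-shift argument (correctly adjusted for the inner perturbation when the stable letter is $tw$ with $w\in N$). One cosmetic slip: when $N$ is elliptic it is contained in a conjugate of the \emph{edge} group rather than ``equal to a vertex group'', but the intended conclusion --- that $N$ would be finitely generated, contradicting that $\SL_d(A)$ is a strictly increasing union of proper subgroups --- stands.
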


This leaves some open questions. Notably
\begin{problem}
Is any infinitely presented, finitely generated metabelian group infinitely independently presentable 
(see Definition \ref{definition:INIP})? 
Does it admit a minimal presentation? 
Here are three test-cases
\begin{itemize}
\item The group $\Z[1/p]^2\rtimes\Z$, with action by the diagonal matrix $(p,p^{-1})$. Here $p$ is a fixed prime number.
\item The group $\Z[1/pq]\rtimes\Z$, with action by multiplication by $p/q$. Here $p,q$ are distinct prime numbers.
\item The free metabelian group on $n\ge 2$ generators.
\end{itemize}
\end{problem}

\subsection*{Acknowledgments.}
We are grateful to Pierre de la Harpe for useful comments on several versions of this paper 
and for helpful references.
We are also grateful to Alexander Olshanskii 
for suggesting Proposition \ref{prp:Main-criterion-intrinsic-condensation-groups}, 
to Mark Sapir for Remark \ref{sapirem} 
and to Slava Grigorchuk for encouragement and suggestions.

\setcounter{tocdepth}{1}    
\tableofcontents
\newpage

%
\section{Preliminaries}
\subsection{Elements of the Cantor-Bendixson theory} 
\label{sec:Elements-CB-theory}
%
%
Let $X$ be a topological space. Its derived subspace $X^{(1)}$ is, by definition,  
the set of its accumulation points. Iterating over ordinals 
\[
X^{(0)}=X,\, X^{(\alpha+1)}=(X^{(\alpha)})^{(1)},\, X^{(\lambda)}=\bigcap_{\beta <\lambda} X^{(\beta)} 
\text{ for $\lambda$ a limit ordinal, }
\]
one constructs a non-increasing family $X^{(\alpha)}$ of closed subsets. 
If $x\in X$, we write 
\[
\CB_X(x) = \sup\{\alpha \mid x \in X^{(\alpha)}\}
\]
if this supremum exists, in which case it is a maximum. 
Otherwise we write $\CB_X(x) = \mk{C}$, where the symbol $\mk{C}$ is not an ordinal. 

We call $\CB_X(x)$ the \emph{Cantor-Bendixson rank of} $x$.
If $\CB_X(x) \neq \mk{C}$ for all $x \in X$, i.e.\ if $X^{(\alpha)}$ is empty for some ordinal, 
we say that $X$ is \emph{scattered}.
A topological space $X$ is called \emph{perfect} if it has no isolated point, i.e., $X^{(1)}=X$. 	
As a union of perfect subsets is perfect, 
every topological space has a unique largest perfect subset, 
called its \emph{condensation part} and denoted $\Cond(X)$. 
Clearly, $\Cond(X)$ is empty if and only if $X$ is scattered and we have 
\[
\Cond(X)=\{x \in X \mid \CB_X(x)=\mk{C}\}.
\]
The subset $X \setminus \Cond(X)$ is the largest scattered open subset, and is called the {\it scattered part} of $X$. 

In the sequel,
we apply the concepts recalled in the above, 
either to the space $\NN(G)$ of all normal subgroups of a finitely generated group $G$,
or to the space of marked groups $\GG_m$   (defined in  Section \ref{ssec:Space-marked-groups}).
Both spaces have three crucial properties:
they are \emph{compact} (by Tychonoff's theorem);
they are \emph{metrizable},
for $G$ and $F_m$ are finitely generated, and hence countable, groups;
and they are totally disconnected.
It follows from the first two properties that $\NN(G)$ and $\GG_m$ have countable bases, and hence countable scattered parts. The three properties, when combined together with perfectness, characterize Cantor spaces. 
As the cardinality of $\GG_m$ is $2^{\aleph_0}$ for $m > 1$ 
(\cite[Thm.~14]{Neu37}, cf.\  \cite[Thm.~7]{Hal54}), 
the condensation part of of $\GG_m$ is homeomorphic to the Cantor set 
(see \cite{Gri84} for an explicit embedding of the Cantor space into $\GG_m$).
This last fact entails, in particular, that every neighbourhood of a condensation point in $\GG_m$ contains $2^{\aleph_0}$ points.

\subsection{The Chabauty topology}

Let $X$ be a topological space, and $F(X)$ the set of closed subsets in $X$. If $(U_j)_{j\in J}$ is a finite family of open subsets of $X$, and $K$ is a compact subset in $X$, define
$$\mathcal{U}((U_j)_{j\in J},K)=\{M\in F(X)\mid M\cap K=\emptyset;\;\;M\cap U_j\neq\emptyset, \forall j\}.$$
Then the $\mathcal{U}((U_j)_{j\in J},K)$, when $(U_j)_{j\in J}$ ranges over all finite families of open subsets of $X$, and $K$ ranges over compact subsets of $X$, form a basis of a topology, called the Chabauty topology on $X$. (It is enough to consider the $U_j$ ranging over a given basis of open sets of $X$.) 

When $X$ is discrete, a basis of open sets is given by singletons. If $(U_j)$ is a finite family of open sets, its union $\mathcal{F}$ is finite and the condition $\forall j,U_j\cap M\neq\emptyset$ simply means $\mathcal{F}\subset M$. Thus a basis of the Chabauty topology of $F(X)=2^X$ is given by the
$$\mathcal{U}(\mathcal{F},\mathcal{F}')=\{M\subset X\mid \mathcal{F}\subset M,\;M\cap\mathcal{F}'=\emptyset\},$$
which looks like the way we introduce it in \S\ref{ssec:Space-marked-groups}.

If $X$ is a locally compact (Hausdorff) space, the space $F(X)$ is Hausdorff and compact.

When $X=G$ is a locally compact group, an elementary verification shows that the set $\mathcal{N}(G)$ of closed normal subgroups is closed in $F(X)$.

Here we only address the case of discrete groups, where the above verifications are even much easier. In this context, it was observed by Champetier-Guirardel \cite[\S 2.2]{ChGu05} that the topology on the set of marked groups on $d$ generators, as introduced by Grigorchuk \cite{Gri84}, 
coincides with the Chabauty topology on the set of normal subgroups of the free group on $d$ generators.

%
%
\section{Abels' groups and a group with Cantor-Bendixson rank 1} 
\label{sec:Group-with-CB-rank-1}
%
In this section, 
we prove Theorem \ref{thm:Infinitely-related-CB-rank-1}. 
%
\subsection{Abels' finitely presentable matrix groups} 
\label{ssec:Abels-groups}
%
Our construction of an infinitely presented group with Cantor-Bendixson rank 1 
is based on a sequence of groups studied by H. Abels and K. S. Brown (\cite{Abe79} and \cite{AbBr87}).
Let $n \ge 3$ be a natural number,
$p$ be a prime number and $\Z[1/p]$ the ring of rationals with denominator a power of $p$. 
Define $A_n \le \GL_n(\Z [1/p])$  
to be the group of matrices of the form 
\[
\begin{pmatrix} 1 & \star & \cdots &\star & \star \\
0 & \star & \cdots & \star&\star\\
\vdots & \vdots & \ddots & \vdots&\vdots\\
0 & 0 & \cdots & \star&\star\\
0 & 0 & \cdots & 0&1\\
\end{pmatrix}
\]
with integral powers of $p$ in the diagonal. 
The group $A_n$ is finitely generated for $n\ge 3$.
The group $A_3$, which was introduced by P.~Hall in \cite{Hal61}, is infinitely presented; for $n \geq 4$, however, the group $A_n$ has a finite presentation. 
Its centre $\mathcal{Z}(A_n)$ is isomorphic to the additive group of $\Z[1/p]$ and thus infinitely generated,
whence the quotient group $A_n/\mathcal{Z}(A_n)$ is  
infinitely presented for all $n\ge 3$.

\subsubsection{Normal subgroups in the groups $A_n$}
\label{sssec:Normal-subgroups-Abels-groups}
We want to show next that the Cantor-Bendixson rank of the quotient $A_n/\mathcal{Z}(A_n)$ is countable 
for $n \geq 4$.  
As a preliminary step, we investigate the normal subgroups of $A_n/\mathcal{Z}(A_n)$. 
For this we need further definitions. 
\begin{definition}
\label{definition:Omega-groups}
Let $\Omega$ be a group. 
A group $G$ endowed with a left action of $\Omega$ by group automorphisms 
is called an $\Omega$-\emph{group}; 
an $\Omega$-invariant subgroup $H$ of $G$ is called an $\Omega$-\emph{subgroup} of $G$.
 If $H$ is a normal subgroup of $G$ 
then $G/H$ is an $\Omega$-group for the induced $\Omega$-action.

An $\Omega$-group $G$ satisfies \emph{max-$\Omega$} if 
it admits no increasing sequence of $\Omega$-subgroups, or equivalently, 
if every $\Omega$-subgroup of $G$ is finitely generated as an $\Omega$-group. 
If $\Omega$ is the group of inner automorphisms of $G$, 
the $\Omega$-subgroups of $G$ are the normal subgroups of $G$. 
If $G$ satisfies max-$\Omega$ in this case, we say that $G$ satisfies \emph{max-n}. 
\end{definition}

We are now ready for the analysis of the normal subgroups of $A_n$ with $n \geq 3$. 
Since its centre $\mathcal{Z}(A_n) \cong \Z[1/p]$ is infinitely generated,
the group $A_n$ does not satisfy max-n. 
Still, we have
\begin{lemma} \label{LemMaxN}
The group $A_n/\mathcal{Z}(A_n)$ satisfies max-n.
\end{lemma}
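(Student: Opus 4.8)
The plan is to show that $A_n/\ZZ(A_n)$ satisfies max-n by exhibiting it as an $\Omega$-group that satisfies max-$\Omega$ for a suitable choice of $\Omega$, and then comparing $\Omega$-subgroups with genuine normal subgroups. The natural candidate for $\Omega$ is the full group $A_n$ acting on itself (and on its quotients) by conjugation, so that $\Omega$-subgroups of $A_n/\ZZ(A_n)$ are exactly its normal subgroups. The key point I would isolate is a Noetherian-type finiteness: although $A_n$ itself fails max-n because the central copy of $\Z[1/p]$ is not finitely generated as a normal subgroup, the only obstruction to max-n lives precisely in that centre, and killing it should restore the ascending chain condition on normal subgroups.

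First I would describe the matrix structure of $A_n$ explicitly. The group sits inside upper unitriangular-type matrices, and its successive commutator or lower-central-type subgroups are governed by the off-diagonal entries, which are elements of the ring $R=\Z[1/p]$. The diagonal torus consists of powers of $p$, and conjugation by such a diagonal matrix scales an off-diagonal entry in position $(i,j)$ by $p^{d_i-d_j}$, i.e.\ by an arbitrary power of $p$. This is the crucial algebraic fact: as an $A_n$-group, each ``root subgroup'' $U_{ij}\cong R$ is generated, under the conjugation action of the diagonal part, by a single element, since multiplication by the units $p^k$ generates all of $\Z[1/p]$ from $1\in\Z$ — with the sole exception of the top-right corner $(1,n)$, whose diagonal entries are both forced to be $1$ and which therefore carries a genuinely untwisted copy of $R$. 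That top-right corner is exactly $\ZZ(A_n)$, and it is the unique place where the action is trivial and hence where finite generation as an $\Omega$-group fails.

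The main step is then to prove that $A_n/\ZZ(A_n)$, with $\Omega=A_n$ acting by conjugation, satisfies max-$\Omega$. I would argue by analyzing the finitely many root subgroups $\bar U_{ij}$ (the images of the $U_{ij}$ for $(i,j)\neq(1,n)$) together with the diagonal subgroup: each $\bar U_{ij}$ is finitely generated as an $\Omega$-group by the remark above, the diagonal is finitely generated as an abelian group, and there are only finitely many of them. Since the property max-$\Omega$ is closed under taking $\Omega$-subgroups, $\Omega$-quotients, and extensions (an extension of two max-$\Omega$ groups satisfies max-$\Omega$, by the standard Noetherian argument intersecting and projecting a chain of $\Omega$-subgroups), I would build $A_n/\ZZ(A_n)$ up from these finitely generated pieces by a sequence of such extensions, concluding that the whole group satisfies max-$\Omega$. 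Finally, since $\Omega=A_n$ acts by conjugation, the $\Omega$-subgroups of $A_n/\ZZ(A_n)$ are precisely its normal subgroups, so max-$\Omega$ is the same as max-n.

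The hard part will be the bookkeeping that shows each root subgroup is finitely generated as an $\Omega$-group and that the extension-closure argument genuinely applies: one has to be careful that conjugation in $A_n$ mixes the off-diagonal entries via the Chevalley-type commutator relations $[U_{ij},U_{jk}]\subseteq U_{ik}$, so the $\Omega$-subgroup filtration must be chosen compatibly with these relations (for instance filtering by the quantity $j-i$ measuring distance from the diagonal) so that the successive subquotients are abelian $\Omega$-modules over the finitely many diagonal generators. Once the filtration is set up correctly, establishing that each abelian subquotient is a Noetherian module over the relevant group ring — using that $\Z[1/p]$ is generated as a module over $\Z[p,p^{-1}]$ by $1$ — is the technical core; everything else is formal manipulation of the max-$\Omega$ closure properties.
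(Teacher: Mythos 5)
Your proposal is correct and follows essentially the same route as the paper: decompose $A_n$ via the diagonal subgroup and the root subgroups $U_{ij}\cong\Z[1/p]$, observe that the diagonal acts on each $U_{ij}$ with $(i,j)\neq(1,n)$ by multiplication by powers of $p$ so that each such root subgroup is a Noetherian (indeed cyclic) module, note that the sole obstruction $U_{1n}=\ZZ(A_n)$ is killed in the quotient, and conclude by closure of max-$\Omega$ under extensions along a subnormal series. The only cosmetic difference is that the paper takes $\Omega=D$ (the diagonal subgroup) rather than $\Omega=A_n$, which proves the slightly stronger property max-$D$ and lets the subnormal series consist of merely $D$-invariant subgroups, whereas your choice forces the filtration terms to be normal in $A_n$ --- which your filtration by $j-i$ does satisfy, so both versions go through.
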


\begin{proof}
We denote by $\id_n$ the $n$-by-$n$ identity matrix and by $E_{ij}$ the $n$-by-$n$ elementary matrix 
whose only non-zero entry is at position $(i,j)$ with value 1; here  $1 \le i<j \le n$.  
Set $U_{ij}=\id_n+\Z[1/p]E_{ij}$.

Let $D \simeq  \Z^{n-2}$ be the subgroup of diagonal matrices in $A_n$. 
It is enough to check that $A_n/\mathcal{Z}(A_n)$ satisfies max-$D$. 
Since the max-$D$ property is stable under extensions of $D$-groups (see the simple argument in \cite[3.7]{Rob96}), 
it suffices to find a subnormal series of $A_n$ whose successive subfactors satisfy max-$D$. 
This is easy: the group $D$  and the  unipotent one-parameter subgroups $U_{ij}$ for $i<j$ 
are isomorphic to factors  of a subnormal series of $A_n$ and these factors satisfy max-$D$,
except for the central subgroup $U_{1n}$.
(Note that the additive group of $U_{ij}$ is isomorphic to $\Z[1/p]$, 
and that some element of $D$ acts on it by multiplication by $p$ if $(i,j)\neq (1,n)$.)
\end{proof}

We now proceed to count subgroups of Abels' groups. We begin by a general lemma, which will also be used in Example \ref{cnine}.

\begin{lemma}\label{lemcos}
Let $G$ be a countable group, in an extension $N\mono G\epi Q$. Suppose that $N$ has at most countably many $G$-subgroups (that is, subgroups that are normal in $G$), and that $Q$ satisfies max-n. Then $G$ has at most countably many normal subgroups.
\end{lemma}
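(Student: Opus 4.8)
The plan is to construct an explicit injection from the set of normal subgroups of $G$ into a countable set. Write $\pi\colon G\epi Q$ for the given epimorphism, so that $N=\ker\pi$. To a normal subgroup $M\trianglelefteq G$ I attach two obvious invariants: its trace $N_0:=M\cap N$ on $N$, which is a $G$-subgroup of $N$, and its image $\bar M:=\pi(M)$, which is a normal subgroup of $Q$. By hypothesis there are only countably many possibilities for $N_0$. For $\bar M$, I first observe that $Q$, being a quotient of the countable group $G$, is countable; since $Q$ satisfies max-n, every normal subgroup of $Q$ is the normal closure of a finite subset, and as there are only countably many finite subsets of a countable group, $Q$ has only countably many normal subgroups. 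Thus the pair $(N_0,\bar M)$ already ranges over a countable set; the difficulty is that it does not determine $M$, so I must record a bounded amount of extra data.

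That data comes from the following reconstruction principle, which is the heart of the proof. For each normal subgroup $\bar M\trianglelefteq Q$ fix, once and for all, a finite normal generating set $q_1,\dots,q_k$ of $\bar M$ (possible by max-n) together with lifts $g_1,\dots,g_k\in G$ with $\pi(g_i)=q_i$. Given $M$ with $\pi(M)=\bar M$, choose for each $i$ an element $m_i\in M$ with $\pi(m_i)=q_i$, and set $n_i:=g_i^{-1}m_i\in N$. I claim that $M$ is exactly the normal closure in $G$ of $N_0\cup\{m_1,\dots,m_k\}$: one inclusion is clear, since all these elements lie in the normal subgroup $M$; for the reverse inclusion one checks that this normal closure already surjects onto $\bar M=\pi(M)$ (its image contains the normal closure of the $q_i$) and meets $N$ in a subgroup containing $N_0=M\cap N$, so a short diagram chase recovers every element of $M$. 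Moreover the coset $n_iN_0\in N/N_0$ is independent of the choice of $m_i$ (two choices differ by an element of $M\cap N=N_0$), and replacing $n_i$ by any element of $n_iN_0$ leaves the normal closure of $N_0\cup\{g_in_i\}$ unchanged. Hence $M$ is completely determined by the datum $(N_0,\bar M,(n_1N_0,\dots,n_kN_0))$.

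This yields a well-defined injection $M\mapsto(N_0,\bar M,(n_1N_0,\dots,n_kN_0))$, and it remains to see that its target is countable. There are countably many $N_0$ and countably many $\bar M$, and for each fixed pair the tuple $(n_iN_0)_i$ lies in $(N/N_0)^k$, which is countable because $N\le G$ is countable and $k$ is finite. A countable union of countable sets being countable, $G$ has at most countably many normal subgroups. The only genuinely substantive step is the reconstruction principle, and the point to watch is that max-n on $Q$ is used precisely to guarantee the \emph{finiteness} of the number $k$ of lifts one must record: without a finite normal generating set of $\bar M$, the extra coset data could not be confined to a countable set. The remaining verifications—well-definedness of the cosets, independence of the choices of $m_i$, and the diagram chase—are routine.
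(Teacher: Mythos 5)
Your proof is correct and follows essentially the same route as the paper: the key step in both is that max-n for $Q$ makes $M/(M\cap N)\cong \pi(M)$ finitely generated as a normal subgroup, so that $M$ is the normal closure of $M\cap N$ together with finitely many elements, leaving only countably many possibilities over each of the countably many traces $M\cap N$. Your explicit injection via the cosets $n_iN_0$ is just a more detailed bookkeeping of the paper's one-line count of finite normal generating sets.
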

\begin{proof}
Because of the assumption on $N$, it is enough to show that for every normal subgroup $M_0$ of $G$ contained in $N$, there are at most countably many normal subgroups $M$ of $G$ such that $M\cap N=M_0$. Indeed, since $M/M_0$ is isomorphic, as a $G$-group, to a subgroup of $Q$, we see that it is finitely generated qua normal subgroup of $M/M_0$; in other words, $M$ is generated, as a normal subgroup, by the union of $M_0$ with a finite set. Thus there are at most countably many possibilities for $M$.
\end{proof}

\begin{proposition}\cite[Thm.~1]{Lyu84} \label{CorCountQuot}
The group $A_n$ has exactly countably many normal subgroups, although it does not satisfy max-n.
\end{proposition}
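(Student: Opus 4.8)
The plan is to apply Lemma \ref{lemcos} to the central extension $\mathcal{Z}(A_n) \mono A_n \epi A_n/\mathcal{Z}(A_n)$. First I would record the three hypotheses of that lemma. The group $A_n$ is countable, being a subgroup of $\GL_n(\Z[1/p])$ over the countable ring $\Z[1/p]$; the quotient $Q=A_n/\mathcal{Z}(A_n)$ satisfies max-n by Lemma \ref{LemMaxN}; and it remains to bound the number of $G$-subgroups of $N=\mathcal{Z}(A_n)$. Here the point is that $N$ is central, so conjugation by $A_n$ fixes every subgroup of $N$ pointwise as a set; hence the $G$-subgroups of $N$ are \emph{all} the subgroups of $N$, and the whole matter reduces to showing that the additive group $\Z[1/p]\cong\mathcal{Z}(A_n)$ has only countably many subgroups.

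This reduction is the heart of the argument and the step I expect to require the most care, so I would carry it out by a direct classification of the nonzero subgroups $H\le\Z[1/p]$. The starting observation is that $H\cap\Z\neq 0$: any $a/p^{k}\in H$ gives $a=p^{k}\cdot(a/p^{k})\in H\cap\Z$, so $H\cap\Z=m\Z$ for a unique integer $m\ge 1$. Next, let $K\in\{0,1,2,\dots,\infty\}$ be the supremum of the powers of $p$ occurring in the denominators of elements of $H$. A short computation shows that once $H\not\subseteq\Z$ one has $p\nmid m$, and that clearing denominators against $m$ (using coprimality of $m$ with the relevant power of $p$) forces $H$ to be determined by the pair $(m,K)$: namely $H=m\Z$ when $K=0$, $H=mp^{-K}\Z$ when $0<K<\infty$, and $H=m\Z[1/p]$ when $K=\infty$. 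Since $(m,K)$ ranges over a countable set, there are only countably many subgroups of $\Z[1/p]$, as required.

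With the three hypotheses verified, Lemma \ref{lemcos} yields that $A_n$ has at most countably many normal subgroups. To upgrade this to \emph{exactly} countably many, I would exhibit an infinite supply: the central subgroups $p^{-k}\Z\le\mathcal{Z}(A_n)$ for $k\ge 0$ are pairwise distinct and normal in $A_n$. The very same strictly ascending chain $\Z\subsetneq p^{-1}\Z\subsetneq p^{-2}\Z\subsetneq\cdots$ of normal subgroups does not stabilize, which re-proves that $A_n$ fails max-n and completes the statement. The only genuinely delicate ingredient is the enumeration of the subgroups of $\Z[1/p]$; everything else is a direct application of Lemmas \ref{LemMaxN} and \ref{lemcos} together with the centrality of $\mathcal{Z}(A_n)$.
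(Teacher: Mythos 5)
Your argument is correct and follows essentially the same route as the paper: apply Lemma \ref{lemcos} to the central extension $\mathcal{Z}(A_n)\mono A_n\epi A_n/\mathcal{Z}(A_n)$, using Lemma \ref{LemMaxN} for max-n of the quotient and the countability of the set of subgroups of $\Z[1/p]$ for the kernel. The only difference is that you supply an explicit (and correct) classification of the subgroups of $\Z[1/p]$, a standard fact the paper simply asserts.
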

\begin{proof}
The centre $\mathcal{Z}(A_n)$ is isomorphic to $\Z[1/p]$ which is not finitely generated; therefore $A_n$ cannot satisfy max-n. As $\mathcal{Z}(A_n)\simeq\Z[1/p]$ has only countably many subgroups, and $A_n/\mathcal{Z}(A_n)$ satisfies max-n, Lemma \ref{lemcos} shows that $A_n$ has at most countably many normal subgroups.
\end{proof}

Proposition \ref{CorCountQuot} was only stated for $n=4$ in \cite{Lyu84} but the proof carries over to the general case with no significant modification. However, we need a more precise description of the set of normal subgroups of $A_n$, which readily implies Proposition \ref{CorCountQuot}.

\begin{lemma} \label{LemNormAbels}
Let $N$ be a normal subgroup of $A_n$. Then either
\begin{itemize}
\item $N\subset \mathcal{Z}(A_n)$, or \item $N$ is finitely generated as a normal
subgroup, and contains a finite index subgroup of $\mathcal{Z}(A_n)$.\end{itemize}
\end{lemma}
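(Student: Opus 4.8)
The plan is to dispose of the trivial alternative first and then concentrate on the substantial one. If $N\subseteq\mathcal{Z}(A_n)$ the first bullet holds and there is nothing to prove, so I assume $N\not\subseteq\mathcal{Z}(A_n)$ and fix some $g\in N\smallsetminus\mathcal{Z}(A_n)$. The two assertions of the second bullet will draw on two different inputs: finite generation modulo the centre is essentially Lemma \ref{LemMaxN}, while the statement that $N$ meets the centre in a finite-index subgroup is the real content. For the latter I would isolate the following \emph{Key Claim}: the normal closure $\langle\langle g\rangle\rangle$ of any $g\in A_n\smallsetminus\mathcal{Z}(A_n)$ contains a finite-index subgroup of $\mathcal{Z}(A_n)=U_{1n}$.

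To prove the Key Claim I would work with the root subgroups $U_{ij}=\id+\Z[1/p]E_{ij}$ and the exact commutator identities $[\,\id+sE_{ij},\,\id+tE_{jk}\,]=\id+stE_{ik}$ for pairwise distinct $i,j,k$, together with $[\,\id+sE_{ij},\,\id+tE_{kl}\,]=\id$ when $j\neq k$ and $i\neq l$; these make $[U_{ij},U_{jk}]=U_{ik}$ on the nose. Since $g\notin U_{1n}$, it is nontrivial at some position $\neq(1,n)$, and I would split into two cases according to where. If $g$ has a nonzero off-diagonal entry, or a diagonal entry $\neq 1$, inside the central block of rows and columns $\{2,\dots,n-1\}$, say $g_{ij}\neq\delta_{ij}$ with $2\le i\le j\le n-1$, then a direct computation shows that the double commutator $[\,\id+cE_{1i},\,[\,g,\,\id+bE_{jn}\,]\,]$ equals $\id+bc(g_{ij}-\delta_{ij})E_{1n}$, a genuinely central element. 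If instead $g$ is trivial on that central block (so $g$ is unipotent and its only nontrivial entries besides the corner lie in the first row or the last column), a single commutator already lands in the centre: $[\,g,\,\id+bE_{jn}\,]=\id+b\,g_{1j}E_{1n}$ when $g_{1j}\neq 0$ for some $2\le j\le n-1$, and symmetrically $[\,\id+cE_{1i},\,g\,]=\id+c\,g_{in}E_{1n}$ when $g_{in}\neq 0$ for some $2\le i\le n-1$. In every case I obtain, letting the free parameter $b$ (or $c$) range over $\Z[1/p]$, the whole subgroup $a\,\Z[1/p]\,E_{1n}\subseteq\langle\langle g\rangle\rangle\cap\mathcal{Z}(A_n)$ for a fixed $a\neq 0$; since any nonzero $a\in\Z[1/p]$ generates a finite-index additive subgroup (write $a=p^mu$ with $u$ prime to $p$, so the index is $|u|$), this proves the Key Claim.

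With the Key Claim in hand I would assemble the lemma as follows. By Lemma \ref{LemMaxN} the quotient $A_n/\mathcal{Z}(A_n)$ satisfies max-n, so the image of $N$ there is finitely generated as a normal subgroup; I pick $g_1,\dots,g_r\in N$ whose images generate it, arranged so that $g_1\notin\mathcal{Z}(A_n)$ (possible since $N\not\subseteq\mathcal{Z}(A_n)$), and set $M=\langle\langle g_1,\dots,g_r\rangle\rangle\subseteq N$. Then $M\mathcal{Z}(A_n)=N\mathcal{Z}(A_n)$, and the Key Claim applied to $g_1$ produces a finite-index subgroup $Z_0\le\mathcal{Z}(A_n)$ with $Z_0\subseteq M$. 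The modular law (using $M\le N$ and $N\subseteq N\mathcal{Z}(A_n)=M\mathcal{Z}(A_n)$) yields $N=M\,(N\cap\mathcal{Z}(A_n))$, whence $N/M\cong (N\cap\mathcal{Z}(A_n))/(M\cap\mathcal{Z}(A_n))$; as $Z_0\subseteq M\cap\mathcal{Z}(A_n)$ has finite index in $\mathcal{Z}(A_n)$, this quotient is finite. Adjoining finitely many coset representatives to the generators of $M$ shows that $N$ is finitely generated as a normal subgroup, while $Z_0\subseteq N$ exhibits the required finite-index subgroup of $\mathcal{Z}(A_n)$.

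The hard part, and the step I would spend most care on, is the Key Claim — specifically, verifying that the chosen commutators land \emph{exactly} in $U_{1n}$ rather than merely modulo deeper terms of the lower central series, and that the case distinction (central block versus first row/last column) is genuinely exhaustive for a non-central $g$. The exactness of the relations $[U_{ij},U_{jk}]=U_{ik}$ is what makes the one- and two-step commutators close up cleanly, and the linear dependence on the free parameter $b\in\Z[1/p]$ is what upgrades ``nonzero central element'' to ``finite-index subgroup of the centre'' — precisely the strengthening that the subsequent counting of normal subgroups requires.
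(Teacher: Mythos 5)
Your proof is correct, and its overall skeleton --- first force a finite-index subgroup of $\mathcal{Z}(A_n)=U_{1,n}$ into $N$ by commutator calculus, then use max-n of a central quotient (Lemma \ref{LemMaxN}) to obtain finite normal generation --- is the same as the paper's; the execution of both halves differs in instructive ways. For the key step the paper passes to $M=N\cap U(A_n)$, argues structurally that $M\not\subseteq\mathcal{Z}(A_n)$ because $K=U(A_n)/\mathcal{Z}(A_n)$ contains its own centralizer, pushes an element of $M$ into $\mathcal{Z}(K)$ by iterated commutators using nilpotency (landing on a matrix $\id+r_1E_{1,n-1}+r_2E_{2,n}+cE_{1,n}$), and only then takes one explicit commutator with $U_{n-1,n}$ or $U_{1,2}$; you instead start from an arbitrary non-central $g$ and reach $U_{1,n}$ directly by one or two elementary-matrix commutators with a case split on where $g$ fails to be central. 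Your route is more computational but more self-contained (no appeal to the centralizer property of $K$ or to nilpotency), and it yields the slightly sharper statement that the normal closure of a \emph{single} non-central element already traps a finite-index subgroup of the centre. The endgames also differ in organization: the paper applies max-n to $A_n/Z'$ (a mild extension of Lemma \ref{LemMaxN} as literally stated, justified since $\mathcal{Z}(A_n)/Z'$ is finite) and compares the index of $N'$ in $N$ with that of $N'\cap Z'$ in $Z'$, whereas you use Lemma \ref{LemMaxN} verbatim on $A_n/\mathcal{Z}(A_n)$ and absorb the central discrepancy with the modular law and a finite-index count; both are sound. I checked your computations --- the identity $[\id+cE_{1i},[g,\id+bE_{jn}]]=\id+bc(g_{ij}-\delta_{ij})E_{1n}$, the exhaustiveness of the case split for $g\notin U_{1,n}$, and the fact that $a\,\Z[1/p]$ has index $|u|$ in $\Z[1/p]$ for $a=p^mu$ with $u$ prime to $p$ --- and they are all correct.
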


We denote by $U(A_n)$ the subgroup of unipotent matrices of $A_n$.

\begin{proof}
Suppose that $N$ is not contained in $\mathcal{Z}(A_n)$. Set $M=N\cap U(A_n)$.
We first prove that $Z'=M \cap \mathcal{Z}(A_n)$ has finite index in $\mathcal{Z}(A_n)$. 

 The image of $N$ inside $A_n/\mathcal{Z}(A_n)$ cannot 
intersect trivially $K=U(A_n)/\mathcal{Z}(A_n)$ as it is a non-trivial normal subgroup and since $K$ contains its own centralizer. Consequently $M$ is not contained in $\mathcal{Z}(A_n)$. Since $K$ is a non-trivial nilpotent group, the image $\bar{M}$ of $M$ in $K$ intersects $\mathcal{Z}(K)$ non-trivially (if an element of $\bar{M}$ is not central, perform the commutator with an element of $K$ to get another non-trivial element of $\bar{M}$, and reiterate until we obtain a central element: the process stops by nilpotency).
 Thus $M$ contains a matrix $m$ of the form $\mathbf{1}_n+r_1E_{1,n-1}+ r_2E_{2,n}+cE_{1,n}$ 
where one of the $r_i$ is not zero. Taking the commutators of $m$ with $U_{n-1,n}$ 
(resp. $U_{1,2}$) if $r_1 \neq 0$ (resp. $r_2 \neq 0$), we obtain a finite index subgroup $Z'$ of $\mathcal{Z}(A_n)=U_{1,n}$ which lies in $M$. The proof of the claim is then complete.

Now $A_n/Z'$ satisfies max-n by Lemma \ref{LemMaxN}, and therefore the image of $N$ in $A_n/Z'$ is finitely generated as a normal subgroup. Lift finitely many generators to elements generating a finitely generated normal subgroup $N'$ of $A_n$ contained in $N$. As $N$ is not contained in $\mathcal{Z}(A_n)$, $N'$ cannot be contained in $\mathcal{Z}(A_n)$. The claim above, applied to $N'$, shows that $N'$ contains a finite index subgroup of $\mathcal{Z}(A_n)$. 
As the index of $N'$ in $N$ coincides with the index of $N' \cap Z'$ in $Z'$, the former is finite. Therefore $N$ is finitely generated as a normal subgroup.
\end{proof}

\subsubsection{Cantor-Bendixson rank of  the groups $A_n/\mathcal{Z}(A_n)$}
\label{sssec:CB-rank-Abels-quotient-groups}
We get in turn

\begin{corollary} \label{crl:Countable-CB-rank}
For every $n\ge 4$ the quotient group of $A_n$
by its centre is an infinitely presented group with at most countable Cantor-Bendixson rank. In particular (anticipating on Lemma \ref{indcon}(\ref{ind1})), it is not INIP and admits no minimal presentation over any finite generating set.
\end{corollary}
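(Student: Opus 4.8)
The plan is to assemble three ingredients that are already in place: the finite presentability of $A_n$ for $n\ge 4$, the infinite generation of its centre, and the count of normal subgroups furnished by Proposition \ref{CorCountQuot}. The infinite presentability and the countable CB-rank are proved separately, after which the ``in particular'' clause follows formally.

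First I would dispose of infinite presentability. Since $A_n$ is finitely presented for $n\ge 4$ while its centre $\mathcal{Z}(A_n)\cong\Z[1/p]$ is not finitely generated (hence not finitely generated as a normal subgroup, being central), the finitely generated quotient $Q:=A_n/\mathcal{Z}(A_n)$ cannot be finitely presented. This is exactly the observation already recorded in \S\ref{ssec:Abels-groups}, so here it only needs to be cited.

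Next comes the countable-CB-rank claim, which is the heart of the matter. I would fix a finite marking of $A_n$, giving an epimorphism $\rho\colon F_m\epi A_n$ whose kernel $R$ is the normal closure of a finite set $\mathcal{R}$. In $\GG_m$ the set $\OO_{\mathcal{R},\emptyset}=\{S\triangleleft F_m\mid R\subseteq S\}$ is clopen: it is basic open by definition, and its complement $\bigcup_{r\in\mathcal{R}}\OO_{\emptyset,\{r\}}$ is open because $\mathcal{R}$ is finite. The correspondence $S\mapsto S/R$ identifies $\OO_{\mathcal{R},\emptyset}$ homeomorphically with the space $\NN(A_n)$ of normal subgroups of $A_n$, and under this identification $Q$ corresponds to the point $\mathcal{Z}(A_n)$. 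By Proposition \ref{CorCountQuot}, $\NN(A_n)$ is countable. A countable compact metrizable space has empty condensation part, since a non-empty perfect Polish space is uncountable; hence it is scattered and its Cantor--Bendixson ranks are countable ordinals. Because CB-rank is a local invariant and $\OO_{\mathcal{R},\emptyset}$ is clopen, one gets $\CB_{\GG_m}(Q)=\CB_{\NN(A_n)}(\mathcal{Z}(A_n))$, a countable ordinal; in particular $\CB(Q)\ne\mk{C}$, so $Q$ is not a condensation group. As CB-rank is group-theoretic, this bound does not depend on the marking.

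Finally, the ``in particular'' clause is purely formal. A group of countable CB-rank is not a condensation group, so by Lemma \ref{indcon}(\ref{ind1}), which asserts that INIP groups are condensation groups, $Q$ is not INIP. Since a finitely generated infinitely minimally presented group would be INIP, $Q$ admits no minimal presentation, and by Proposition \ref{prp:Infinitely-minimally-presented-groups} this conclusion is independent of the chosen finite generating set. The genuinely hard work lies upstream in Proposition \ref{CorCountQuot}; granting that, the only points to handle with care are the clopenness of $\OO_{\mathcal{R},\emptyset}$ and the locality of CB-rank, which together let the countability of $\NN(A_n)$ control the rank of $Q$ inside the much larger space $\GG_m$.
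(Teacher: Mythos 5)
Your proposal is correct and follows essentially the same route as the paper: use finite presentability of $A_n$ to obtain a neighbourhood of $A_n/\mathcal{Z}(A_n)$ consisting of quotients of $A_n$, invoke Proposition \ref{CorCountQuot} to see that this neighbourhood is countable, and conclude that the group is not a condensation point and hence has countable Cantor--Bendixson rank. You merely spell out in more detail the identification of the clopen neighbourhood with $\NN(A_n)$ and the locality of the CB-rank, which the paper leaves implicit.
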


\begin{proof}
Since $A_n$ is finitely presentable 
\cite{AbBr87} (\cite{Abe79} for $n=4$), 
there is an open neighborhood of $A_n/\mathcal{Z}(A_n)$ in the space of finitely generated groups 
which consists of marked quotients of $A_n$. 
This neighborhood is countable by Proposition \ref{CorCountQuot}
and so $A_n/\mathcal{Z}(A_n)$ is not a condensation group. But if so, its  Cantor-Bendixson rank must be a countable ordinal
(cf. the last part of \S\ref{sec:Elements-CB-theory}).
 \end{proof}
\begin{remark}
\label{remark:CB-rank-Abels-groups}
The Cantor-Bendixson rank of $A_n/\mathcal{Z}(A_n)$ for $n\ge 4$ can be computed explicitly:
it is $n(n+1)/2-3$ and so coincides with the number of relevant coefficients in a ``matrix'' in $A_n$, 
viewed modulo $\mathcal{Z}(A_n)$.  For $n=4$, this number  is 7. 
Similarly, the Cantor-Bendixson rank of $A_n$ is $n(n+1)/2-2$ for $n\ge 4$.
These claims can be justified as in \cite[Lem.~3.19]{Cor11b}.
We shall not give any details of the verification; 
instead we refine the construction in the next paragraph to get an example of an infinitely presented solvable group with Cantor-Bendixson rank one. 
\end{remark}

\subsection{Construction of a group with Cantor-Bendixson rank 1} 
\label{ssec:Construction-group-CB-rank-1}

We begin by explaining informally the strategy of our construction. As we saw, if $Z\simeq\Z[1/p]$ is the centre of $A_n$ and $n\ge 4$, then $A_n/Z$ is an infinitely presented group and is not condensation; this is based on the property that $Z$, albeit infinitely generated, has, in a certain sense few subgroups. However, $A_n/Z$ is not of Cantor-Bendixson rank one, notably because it has too many quotients. 

We are going to construct a similar example, but using an artifact to obtain a group that cannot be approximated by its own proper quotients. We will start with a group $A$, very similar to $A_5$ but whose centre $Z$ is isomorphic to $\Z[1/p]^2$, and $Z_0\simeq\Z^2$ a free abelian subgroup of rank two. 

We will consider a certain, carefully chosen subgroup $V_1$ of $Z$, containing $Z_0$, such that $Z/V_1$ and $V_1/Z_0$ are both isomorphic to $\Z(p^\infty)$. 
The factor group $A/V_1$ is infinitely presented 
and the quotient groups of the finitely presentable group $A/Z_0$ 
make up an open neighbourhood $\VV_0$ of $A/V_1$  in the space of marked groups.
If there existed a smaller neighbourhood $\VV$ of $V_1$ 
containing only subgroups of $Z/Z_0$ and in which $V_1/Z_0$ were the only infinite subgroup 
then $A/V_1$ would have have Cantor-Bendixson rank 1. At this point this seems illusory, because it can be checked elementarily that $V_1$ is a condensation point in the space of subgroups of $Z$. To remedy these shortcomings, we work in a semidirect product $A\rtimes C$, where $C$ is the cyclic subgroup generated by some automorphism normalizing $V_1$. The point is that the construction will ensure that only few of the subgroups of $Z$ will be normalized by $C$.

Both $C$ and $V_1$ have to be chosen with care. We will prescribe the automorphism generating $C$ to act on $Z\simeq\Z[1/p]^2$ in such a way that $\Z^2$ is invariant and that the action is diagonalizable over $\Q_p$ but not over $\Q$. If $\mathcal{D}_1$ is an eigenline in $\Q_p^2$, the subgroup $V_1$ will be defined as $(\mathcal{D}_1+\Z_p^2)\cap\Z[1/p]^2$. The assumption of non-diagonal\-izability over $\Q$ will intervene when we need to ensure that $C$ normalizes few enough subgroups in $A$.
%
\subsubsection{Choice of the group $A$}
\label{sssec:Choice-group-tilde-A}
Let $p$ be a fixed prime number 
and let $A$  be the subgroup of $A_5 \leq \GL_5(\Z[1/p])$ consisting of all matrices of the form
\[
\begin{pmatrix} 1 & \star & \star &\star & \star \\
0 & \star & \star & \star&\star\\
0 & 0& \star & \star&\star\\
0 & 0 & 0 & 1&0\\
0 & 0 & 0 & 0&1
\end{pmatrix}.
\]
The group $A$ is an $S$-arithmetic subgroup of $\GL_5(\Q)$ 
and so the question whether it admits a finite presentation can,
by the theory developed by H. Abels in \cite{Abe87}, be reduced to an explicit computation of homology, which in the present case is performed in \cite[\S 2.2]{dCor09}:
 \begin{lemma}
\label{Afp}
The group $A$ is finitely presentable.
\end{lemma}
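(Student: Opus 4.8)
The plan is to realise $A$ as a soluble $S$-arithmetic group and to appeal to Abels' criterion for finite presentability, the genuine content being the explicit homology computation of \cite{Cor09}. Write $A = \mathbf{A}(\Z[1/p])$, where $\mathbf{A} \le \GL_5$ is the $\Q$-defined soluble linear algebraic group cut out by the prescribed matrix shape and $S = \{\infty, p\}$. The group $\mathbf{A}$ is an extension $1 \to \mathbf{U} \to \mathbf{A} \to \mathbf{T} \to 1$, where $\mathbf{U}$ is the unipotent radical (the admissible strictly upper triangular entries) and $\mathbf{T} \cong \mathbb{G}_m^2$ is the diagonal torus recording the entries in positions $(2,2)$ and $(3,3)$; accordingly $A = U(A) \rtimes D$ with $D \cong \Z^2$ acting by scaling the root subgroups by powers of $p$.

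Next I would invoke Abels' theory \cite{Abe87}: for a soluble $S$-arithmetic group of this type, finite presentability of $A$ is equivalent to the compact presentability of the locally compact group $\mathbf{A}(\Q_p)$, and the latter is controlled by homological finiteness properties of the unipotent radical together with the weights of the $D$-action on its homology. Concretely one must verify two things: finite generation, a first-homology condition which recovers the finite generation of $A$; and a second-order, $\mathrm{FP}_2$-level condition on $\mathbf{U}$, governed by the configuration of $D$-weights occurring in $H_2(\mathbf{U})$, which upgrades finite generation to the property $\mathrm{FP}_2$ and hence, for a finitely generated group, to finite presentability.

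The heart of the argument — and the main obstacle — is this second, $H_2$-level computation, carried out in \cite[\S 2.2]{Cor09}. One determines the weights of the relevant homology of $\mathbf{U}$ under the $D$-action and checks that they avoid the forbidden configuration (the analogue, in Abels' set-up, of a weight sitting at the origin, which is precisely the obstruction responsible for the failure of finite presentability in the critical case). I stress that no shortcut from the groups $A_n$ is available here: $A$ contains a copy of the \emph{infinitely} presented group $A_3$ (for instance on the rows and columns indexed by $\{1,2,4\}$, where the diagonal reads $(1, p^{k}, 1)$), so finite presentability of $A$ cannot be inherited from any subgroup and genuinely reflects the extra room provided by the last two columns — exactly the feature that the weight computation must detect. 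Since $A$ is obtained from $A_5$ by trivialising the $(4,4)$- and $(4,5)$-degrees of freedom, its weight diagram differs from that of $A_n$, so the Abels--Brown computation for the $A_n$ cannot simply be quoted and the calculation must be redone for this specific group.
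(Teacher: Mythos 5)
Your proposal is correct and follows essentially the same route as the paper: both identify $A$ as a soluble $S$-arithmetic subgroup of $\GL_5(\Q)$, invoke Abels' theory from \cite{Abe87} to reduce finite presentability to a homological/weight condition on the unipotent radical, and defer the actual computation to \cite[\S 2.2]{Cor09}. The additional remarks (the decomposition $A=U(A)\rtimes D$ with $D\cong\Z^2$, and the observation that the weight computation cannot be borrowed from the Abels--Brown analysis of the $A_n$) are accurate but not required beyond what the paper itself records.
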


\subsubsection{Choice of the automorphism and the subgroup $E$.}
\label{sssec:Choice-E}
%
The automorphism will be induced by conjugation by a matrix of the form
\[
M_1=\begin{pmatrix}\id_3 & 0 \\ 0 & M_0\end{pmatrix}\in\GL_5(\Z),
\]
where $M_0$ is any matrix in $\GL_2(\Z)$ satisfying the following requirements:
\begin{enumerate}[(a)]
\item\label{iti} $M_0$ is not diagonalizable over $\Q$;
\item\label{itii} $M_0$ is diagonalizable over the $p$-adic numbers $\Q_p$.
\end{enumerate} 

An example of a matrix $M_0$ with properties (\ref{iti}) and  (\ref{itii}) 
is the companion matrix of the polynomial $X^2+p^3X-1$:
one verifies easily that (\ref{iti}) holds for every choice of $p$;
to check (\ref{itii}), one can use \cite[Sec.~II.2]{Ser77b} to prove 
that the root 1 of this polynomial in $\Z/p^3 \Z$ lifts to a root in the ring of $p$-adic integers $\Z_p$. Note that $M$ is allowed to be a matrix of finite order (thus of order 3, 4, or 6 by (\ref{iti}), the validity of (\ref{itii}) depending on $p$).
\smallskip

Let $\lambda_1$ and $\lambda_2$ be the eigenvalues of the matrix $M_0$ in $\Q_p$.
Since multiplication by $M_0$ induces an automorphism of $\Z_p^2$, 
these eigenvalues are units of the ring $\Z_p$;
as $M_0$ cannot be a multiple of the identity matrix in view of requirements (\ref{iti}) and (\ref{itii}), they are distinct.
Let $\DD_1$ and $\DD_2$ denote the corresponding eigenlines in the vector space $\Q_p^2$.
Then $\Q_p^2=\DD_1 \oplus \DD_2$. 
By assumption, the intersection of $\DD_i$ ($i=1,2$) with the dense subgroup $\Z[1/p]^2$ are trivial so we rather consider the intersections
$$E_i=\Z[1/p]^2\cap (\DD_i+\Z_p^2).$$
Since $\DD_i+\Z_p^2$ is an open subgroup and $\Z[1/p]$ is dense, we see that $E_i$ is an extension of $\Z^2=\Z_p^2\cap\Z[1/p]^2$ by $(\DD_i+\Z_p^2)/\Z_p^2\simeq\Z(p^\infty)$, and $\Z[1/p]^2/E_i$ is isomorphic to $\Q_p^2/(\DD_i+\Z_p^2)\simeq\Z(p^\infty)$.

Set $e_{ij}^a = \id_n + a E_{ij}$ for $a \in \Z[1/p]$, $1 \le i \neq j \le n$.
Then the centre $Z$ of $A$ consists of the products $e_{14}^a \cdot e_{15}^b$.
The map
\begin{equation}
\label{eq:Definition-mu-tilde}
\mu \colon \Z[1/p]^2 \iso Z, \quad (a,b) \mapsto e_{14}^a \cdot e_{15}^b.
\end{equation}
is an isomorphism and shows that $Z$ is isomorphic to $\Z[1/p]^2$.
Define $V_i=\mu(E_i)$.
Note that $Z_0\subset V_i\subset Z$ and both $Z/V_i$ and $V_i/Z_0$ are isomorphic to $\Z(p^\infty)$. 
Notice that the group $C=\langle M_1\rangle$ normalizes $A$, its centre $Z$ as well as $Z_0$, $V_1$, and $V_2$. 
We are ready to define our group as

\begin{equation}
\label{eq:Definition-group-CB-rank-1}
B=(A/V_1) \rtimes C.
\end{equation}

If $\bar{V_1}$ denotes the image of $V_1$ in $\bar{A}=A/Z_0$, we see that $B=(\bar{A}/\bar{V_1})\rtimes C$. Since $\bar{A}$ and hence $\bar{A} \rtimes C$ are finitely presentable,
the quotient groups of $\bar{A}\rtimes C$ form an open neighbourhood of $B$ in the space of marked groups.
In what follows we need a smaller neighbourhood; 
to define it, we consider the (finite) set $W_0$ of elements of order $p$ in $\bar{Z}-\bar{V_1}$ and $W=W_0\cup\{w\}$, where $w$ is a fixed nontrivial element of $\bar{V_2}-\bar{V_1}$.

\begin{equation}
\label{eq:Definition-VV}
\VV = \OO^{\bar{A} \rtimes C}_{W} =  \{ N \triangleleft \bar{A} \rtimes C \mid N\cap W=\emptyset \}.
\end{equation}
The set $\VV$ is actually far smaller than its definition would suggest:
\begin{theorem} 
\label{ThCBone}
The set $\VV$ consists only of $\bar{V_1}\simeq\Z(p^\infty)$ and of finite subgroups of $\bar{Z}$, and so $B$ is an infinitely presented, nilpotent-by-abelian group of Cantor-Bendixson rank one.  
\end{theorem}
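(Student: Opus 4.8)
The plan is to carry out everything inside the space $\NN(\bar{A}\rtimes C)$ of normal subgroups of the finitely presented group $\bar{A}\rtimes C$, in which $B$ is the point $\bar{V_1}$ and $\VV$ is an open neighbourhood of it; finite presentability makes $\VV$ open in the ambient $\GG_m$, so all Cantor--Bendixson computations are local. I would then (I) prove that every $N\in\VV$ is either $\bar{V_1}$ or a finite subgroup of $\bar{Z}$, and (II) deduce from the resulting local picture that $\bar{V_1}$ has rank $1$ and that $B$ has the two stated properties.

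For (I) the first point is that $N\subseteq\bar{Z}$ for every $N\in\VV$. Write $\tau$ for the automorphism induced by $M_1$ on $\bar{Z}\cong\Q_p^2/\Z_p^2$; it lies in $\GL_2(\Z_p)$, its eigenlines are $\DD_1,\DD_2$, and its eigenvalues are the units $\lambda_1,\lambda_2$ (up to inversion). If $N$ surjected nontrivially onto $C$, choose $g=ac\in N$ with $c=M_1^k\neq 1$; for $h\in\bar{Z}$ one has $[g,h]={}^{c}h\cdot h^{-1}$, so $N\cap\bar{Z}$ contains the image of $\tau^k-1$. Condition (i) makes the characteristic polynomial of $M_0$ irreducible over $\Q$, so its two roots are roots of unity only simultaneously; as $M_1^k\neq 1$ the automorphism $\tau^k$ then has no eigenvalue $1$, hence $\tau^k-1$ is invertible over $\Q_p$ and therefore surjective on $\Q_p^2/\Z_p^2$. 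This gives $N\supseteq\bar{Z}\supseteq W_0$, contradicting $N\cap W=\emptyset$; thus $N\subseteq\bar{A}$. Applying the description of normal subgroups of $A$ (obtained exactly as in Lemma \ref{LemNormAbels}) to the preimage of $N$, either $N\subseteq\bar{Z}$ or $N$ contains a finite-index subgroup of $\bar{Z}$; the latter forces $N\supseteq\bar{Z}\ni W_0$ because $\bar{Z}\cong\Z(p^\infty)^2$ is divisible, so in fact $N\subseteq\bar{Z}$.

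It then remains to classify the $C$-invariant subgroups $N\subseteq\bar{Z}$ with $N\cap W=\emptyset$. If $N$ is infinite then, as a subgroup of $\Z(p^\infty)^2$, it has a nonzero divisible part; this part is $\tau$-invariant, so by the $p$-adic diagonalisation (ii) it corresponds to $\DD_1$, $\DD_2$ or $\Q_p^2$, i.e.\ it is $\bar{V_1}$, $\bar{V_2}$ or $\bar{Z}$. The cases $\bar{Z}$ and $\bar{V_2}$ are excluded since $W_0\subseteq\bar{Z}$ and $w\in\bar{V_2}$; in the remaining case $N\supseteq\bar{V_1}$ and $N/\bar{V_1}$ is a reduced subgroup of $\bar{Z}/\bar{V_1}\cong\Z(p^\infty)$, hence a finite $\Z/p^k$. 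If $k\geq 1$ then $N$ contains every element of order $p$, in particular $W_0$, so $k=0$ and $N=\bar{V_1}$. Otherwise $N$ is finite. This establishes (I).

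For (II) I would first check that each finite $N\in\VV$ is isolated in $\NN(\bar{A}\rtimes C)$: the set $\{z\in\bar{Z}:pz\in N\}$ is finite, and the basic neighbourhood cut out by $\FF=N$ and $\FF'=(\{z\in\bar{Z}:pz\in N\}\setminus N)\cup W$ lies in $\VV$ (as $W\subseteq\FF'$) and reduces to $\{N\}$, since any strictly larger member of $\VV$ contains some $y\notin N$ with $py\in N$, i.e.\ $y\in\FF'$. By contrast $\bar{V_1}$ is the limit of the finite groups $\{x\in\bar{V_1}:p^{n}x=0\}\in\VV$, so it is not isolated; hence $\bar{V_1}$ is the unique non-isolated point of $\VV$, which says precisely that its Cantor--Bendixson rank is $1$. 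Infinite presentability follows because $\bar{V_1}$ is not finitely generated as a normal subgroup of $\bar{A}\rtimes C$: it is central in $\bar{A}$ and $C$ acts on it by the scalar $\lambda_1$, so the normal closure of a finite subset is a bounded-torsion, hence finite, subgroup of $\bar{V_1}\cong\Z(p^\infty)$; as $\bar{A}\rtimes C$ is finitely presented, $B$ is infinitely presented. Finally $B$ is nilpotent-by-abelian: $M_1=\mathrm{diag}(\id_3,M_0)$ acts only on the last two columns and so commutes with the diagonal torus $D\cong\Z^2$, whence $B/\bar{U}\cong D\times C\cong\Z^3$ is abelian for $\bar{U}$ the (nilpotent) image of $U(A)$, i.e.\ $[B,B]\subseteq\bar{U}$. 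The main obstacle is the dichotomy in the classification step, where one must exploit (ii) to produce exactly the two invariant lines $\bar{V_1},\bar{V_2}$ while simultaneously exploiting (i) to make $\tau^k-1$ surjective, so that the single element $w$ together with $W_0$ suffices to eliminate every unwanted normal subgroup.
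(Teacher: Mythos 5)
Your overall strategy coincides with the paper's: show every $N\in\VV$ lies in $\bar{Z}$, classify the infinite $C$-invariant subgroups of $\bar{Z}\simeq\Z(p^\infty)^2$ using the $p$-adic diagonalisability (ii), and read off the Cantor--Bendixson rank from the fact that the finite members of $\VV$ are isolated while $\bar{V_1}$ is not; the arguments for infinite presentability and for nilpotent-by-abelianness are also the paper's. Two local steps are genuinely different and both are correct: you rule out $N\nsubseteq\bar{A}$ by commutating with central elements and showing $\tau^k-1$ is surjective on $\Z(p^\infty)^2$ (via irreducibility over $\Q$ of the characteristic polynomial of $M_0$), where the paper instead uses that $A/Z$ contains its own centralizer in $(A/Z)\rtimes C$; and you classify the infinite invariant subgroups of $\bar{Z}$ through their divisible part, rather than through the closure-in-$\Q_p^2$ argument of Lemma \ref{LemEigenLines}. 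These variants are, if anything, slightly more self-contained.

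There is, however, one genuine gap. You assert that for the preimage of $N$ ``either $N\subseteq\bar{Z}$ or $N$ contains a finite-index subgroup of $\bar{Z}$'', claiming this is ``obtained exactly as in Lemma \ref{LemNormAbels}''. It is not: that lemma concerns $A_n$, whose centre is $\Z[1/p]$ of $\Z[1/p]$-rank one, so any nonzero $\Z[1/p]$-submodule of the centre automatically has finite index. For the group $A$ the centre is $\Z[1/p]^2$, and the commutator computation only yields that a normal subgroup $N\nsubseteq Z$ of $A$ contains \emph{some} nonzero $\Z[1/p]$-submodule of $Z$, possibly of infinite index: for instance the normal closure of $e_{24}^{1}$ in $A$ is not contained in $Z$ but meets $Z$ exactly in $\mu(\Z[1/p]\times\{0\})$, so the dichotomy is false for plain normal subgroups of $A$. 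To upgrade to a finite-index subgroup one must use that the preimage of $N$ is normalized by $C$ and that, by property (\ref{iti}), $M_0$ acts $\Q$-irreducibly on $Z\otimes\Q$; this is precisely the content of the paper's Lemma \ref{normalabels2}, and it is a second, independent use of hypothesis (\ref{iti}) that your closing sentence does not account for (you attribute (\ref{iti}) only to the surjectivity of $\tau^k-1$). Since your $N$ is indeed $C$-invariant the statement you need is true, but the justification as written is incomplete. A minor further slip: $M_0$, hence $C$, is allowed to have finite order, so $B/\bar{U}$ is $\Z^2\times C$ rather than $\Z^3$; it is abelian either way.
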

\subsection{Proof of Theorem \ref{ThCBone}} 
\label{ssec:Proof-of-theorem-3.8}
The proof is based on two lemmata. 
In each of them one of the defining properties of the matrix $M_0$ plays a crucial r\^{o}le.
The first lemma exploits property (\ref{iti}) and reads:
\begin{lemma}
\label{normalabels2}
Let $N$ be a normal subgroup of $A$ normalized by $C$ and not contained in $Z$. Then $N$ contains a finite index subgroup of $Z$. If $N$ is in addition assumed to contain $Z_0$, then it contains $Z$. 
\end{lemma}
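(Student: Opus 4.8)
The plan is to prove the two assertions in turn, reducing the second to the first. The heart of the matter is the first claim: that $N\not\subseteq Z$ forces $N\cap Z$ to have finite index in $Z$. Granting this, the second assertion will follow at once. Indeed, if in addition $Z_0\subseteq N$, then $Z_0\subseteq N\cap Z\subseteq Z$ with $N\cap Z$ of finite index in $Z$, so $(N\cap Z)/Z_0$ is a finite-index subgroup of $Z/Z_0\cong(\Z[1/p]/\Z)^2\cong\Z(p^\infty)^2$. The latter group is divisible, and a divisible abelian group has no proper subgroup of finite index; hence $(N\cap Z)/Z_0=Z/Z_0$, i.e.\ $Z\subseteq N$.

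For the finite-index claim I would argue in two stages. In the first stage I would produce a nonzero $\Z[1/p]$-submodule inside $N\cap Z$, arguing exactly as in the proof of Lemma \ref{LemNormAbels}. Set $M=N\cap U(A)$, where $U(A)$ is the unipotent radical. Since the diagonal quotient $A\epi\Z^2$ is abelian one has $[A,A]\subseteq U(A)$, so from $N\not\subseteq Z(A)=Z$ we get $1\neq[N,A]\subseteq M$; and as in Lemma \ref{LemNormAbels} one checks $M\not\subseteq Z$. Using that $U(A)/Z$ is nilpotent and self-centralising in $A/Z$, I would extract an element $m\in M$ that lies, modulo $Z$, in the centre of $U(A)/Z$ but is not itself central. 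Commuting such an $m$ with a suitable one-parameter unipotent subgroup $U_{ij}=\{e_{ij}^{t}:t\in\Z[1/p]\}$ then yields a subgroup of the shape $\{e_{1k}^{ct}:t\in\Z[1/p]\}$ with $k\in\{4,5\}$ and $c\neq0$. Because $t$ ranges over all of $\Z[1/p]$, this is a full, nonzero $\Z[1/p]$-submodule $L=\Z[1/p]\cdot v\subseteq N\cap Z$ with $0\neq v\in Z$.

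In the second stage I would spread $L$ out to finite index, and this is where the hypothesis that $C$ normalises $N$ is decisive. Conjugation by $M_1$ induces an automorphism $\sigma$ of $Z\cong\Z[1/p]^2$ that is $\Q$-conjugate to $M_0^{-1}$, so its characteristic polynomial coincides with that of $M_0^{-1}$ and is therefore irreducible over $\Q$ (by property \ref{iti}, using \ref{itii} to exclude a repeated eigenvalue); consequently $\sigma$ fixes no line of $Z\otimes_{\Z}\Q\cong\Q^2$. Since $N\cap Z$ is $\sigma$-invariant, it contains $\sigma(L)=\Z[1/p]\cdot\sigma(v)$ as well as $L$, and as $\sigma$ has no invariant line the vectors $v$ and $\sigma(v)$ are linearly independent over $\Q$. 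Hence $N\cap Z\supseteq\Z[1/p]\,v+\Z[1/p]\,\sigma(v)$, a rank-two $\Z[1/p]$-submodule. Finally, two $\Q$-independent vectors generate a finite-index submodule of $\Z[1/p]^2$, because $\Z[1/p]$ is a principal ideal domain in which every nonzero quotient $\Z[1/p]/(d)\cong\Z/(d')$ (with $d'$ the prime-to-$p$ part of $d$) is finite. This gives $[Z:N\cap Z]<\infty$, as required.

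I expect the main obstacle to be the first stage: producing a \emph{full} $\Z[1/p]$-line in $N\cap Z$ uniformly, whatever the support of an element of $N\setminus Z$, which is the analogue for $A$ of the commutator bookkeeping in Lemma \ref{LemNormAbels}. It is worth noting that the $C$-invariance is genuinely indispensable: the normal closure in $A$ of a single root subgroup such as $\{e_{24}^{t}\}$ meets $Z$ only in the line $\{e_{14}^{a}\}$, which is of infinite index in $Z$, and property \ref{iti} is exactly what rules out such a $C$-invariant ``thin'' intersection.
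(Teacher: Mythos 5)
Your argument is correct and follows essentially the same route as the paper: reduce to producing a nonzero $\Z[1/p]$-line in $N\cap Z$ by the commutator bookkeeping of Lemma \ref{LemNormAbels}, upgrade it to finite index using that property (i) makes the $C$-action on $Z\otimes_\Z\Q$ irreducible, and finish with the divisibility of $Z/Z_0\cong\Z(p^\infty)^2$. (One cosmetic slip: the line you extract need not be a coordinate line $\{e_{1k}^{ct}\}$ --- when both $u_{24}$ and $u_{25}$ are nonzero it is the $\Z[1/p]$-span of $(u_{24},u_{25})$ --- but your subsequent use of it as a general $\Z[1/p]\cdot v$ is exactly what is needed, and the paper observes that the case $u_{13}\neq 0$ even yields finite index directly, without invoking $C$.)
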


\begin{proof}
The proof is similar in many respects to that of Lemma \ref{LemNormAbels}, so we skip some details. We first claim that if $N$ is any normal subgroup in $A$ and $N \nsubseteq Z$, then $N$ contains a nontrivial $\Z[1/p]$-submodule of $Z\simeq\Z[1/p]^2$.

Arguing as in the proof of Lemma \ref{LemNormAbels}, it follows that $N$ contains an element in $[U(A),U(A)]-Z$, namely of the form

\[
\begin{pmatrix}
1 & 0 & u_{13} & u_{14} & u_{15} \\
0 & 1 & 0 & u_{24} & u_{25} \\
0 & 0 & 1 & 0 & 0 \\
0 & 0 & 0 & 1 & 0 \\
0 & 0 & 0 & 0 & 1 \\
\end{pmatrix} \quad \text{with} \quad (u_{13},u_{24},u_{25})\neq(0,0,0).
\]
If $u_{13}\neq 0$, then by taking commutators with elementary matrices in the places $(3,4)$ and $(3,5)$ 
we obtain a subgroup of $Z$ of finite index.
If $(u_{24}, u_{25})\neq (0,0)$, 
we take commutators with elementary matrices in the $(1,2)$-place 
and 
find that $N$ contains the image under $\mu$ 
of the $\Z[1/p]$-submodule generated by $(u_{24},u_{25})$, so the claim is proved. 

If $N\nsubseteq Z$ is assumed in addition to be normalized by $C$, 
which acts $\Q$-irreducibly by hypothesis (\ref{iti}), 
we deduce that $N\cap Z$ has finite index in $Z$ in all cases. If $Z_0\subset Z$, it follows that the image of $N\cap Z$ in $Z/Z_0$ has finite index; since $Z/Z_0$ has no proper subgroup of finite index, it follows that $Z\subset N$.
\end{proof}

The second auxiliary result makes use of hypothesis (\ref{itii}).
\begin{lemma} \label{LemEigenLines}
Let $H$ be an $M_0$-invariant subgroup of $\Z[1/p]^2$. Suppose that $H$ contains $\Z^2$ and $H/\Z^2$ is infinite.
Then $H=\Z[1/p]^2$, or $H=p^{-n}\Z^2+E_i$ for some $i\in\{1,2\}$ and $n\ge 0$.
\end{lemma}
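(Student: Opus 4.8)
The plan is to work in the quotient $Q := \Z[1/p]^2/\Z^2$, in which $\ov H := H/\Z^2$ is an infinite subgroup, and to classify the infinite $M_0$-invariant subgroups there. First I would record the isomorphism $Q \cong \Q_p^2/\Z_p^2$ induced by the inclusion $\Z[1/p]\incl\Q_p$; it is bijective because $\Z[1/p]+\Z_p=\Q_p$ and $\Z[1/p]\cap\Z_p=\Z$. Under it the image $\ov{E_i}$ of $E_i$ becomes $(\DD_i+\Z_p^2)/\Z_p^2\cong\Z(p^\infty)$. Since $Q$ is a $p$-primary torsion group, every subgroup is automatically a $\Z_p$-submodule, and because $M_0\in\GL_2(\Z)\subset M_2(\Z_p)$ preserves $\Z_p^2$, the subgroup $\ov H$ is a module over $R:=\Z_p[M_0]$.

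The crucial step is to show that an infinite $\ov H$ must contain some $\ov{E_i}$. Here hypothesis (\ref{itii}) enters: over $\Q_p$ one has $\Q_p^2=\DD_1\oplus\DD_2$, with spectral projections $\pi_i=(M_0-\lambda_j)/(\lambda_i-\lambda_j)$, $\{i,j\}=\{1,2\}$. Writing $m=v_p(\lambda_1-\lambda_2)\ge 0$ (the $\lambda_i$ being distinct units of $\Z_p$), I would check that $p^m\pi_i=u_i(M_0-\lambda_j)$ for some $u_i\in\Z_p^\times$, so that $p^m\pi_i\in R$ preserves $\ov H$; moreover $p^m\pi_i$ sends $Q$ into $\ov{E_i}$ since $\im\pi_i=\DD_i$. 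As $\ov H$ is infinite it contains elements $x_\ell$ of order at least $p^\ell$; choosing lifts $\tilde x_\ell\in\Q_p^2$ and writing $\tilde x_\ell=\pi_1\tilde x_\ell+\pi_2\tilde x_\ell$, the order of $x_\ell$ divides the maximum of the orders of the two reductions, so one projection has order at least $p^\ell$. By the pigeonhole principle a single index $i_0$ occurs for infinitely many $\ell$, whence $\ov H\cap\ov{E_{i_0}}$ contains elements of unbounded order; since $\ov{E_{i_0}}\cong\Z(p^\infty)$ has no infinite proper subgroup, $\ov{E_{i_0}}\subseteq\ov H$.

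Assume now $\ov{E_1}\subseteq\ov H$. The quotient $Q/\ov{E_1}\cong\DD_2/\pi_2(\Z_p^2)\cong\Z(p^\infty)$ has subgroups forming the single chain $0\subset\Z/p\subset\Z/p^2\subset\cdots$, so $\ov H/\ov{E_1}$ is either everything — giving $\ov H=Q$ and $H=\Z[1/p]^2$ — or the $p^n$-torsion for some $n\ge 0$. In the latter case I would observe that $(Q[p^n]+\ov{E_1})/\ov{E_1}$ is a subgroup of $Q/\ov{E_1}$ of order $p^n$ (as $Q[p^n]\cong(\Z/p^n)^2$ and $\ov{E_1}\cap Q[p^n]=\ov{E_1}[p^n]\cong\Z/p^n$), hence equals the $p^n$-torsion; therefore $\ov H=Q[p^n]+\ov{E_1}=\ov{p^{-n}\Z^2+E_1}$, that is $H=p^{-n}\Z^2+E_1$. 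The case $\ov{E_2}\subseteq\ov H$ is symmetric.

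I expect the main obstacle to be the middle paragraph: proving that an infinite invariant subgroup absorbs a whole eigenline image $\ov{E_i}$. The delicate point is that when $\lambda_1\equiv\lambda_2\pmod p$ (i.e.\ $m>0$) the two eigenlattices need not split $\Z_p^2$, so $M_0$ is not diagonalizable over $\Z_p$ and no naive coordinate argument is available; the computation $p^m\pi_i\in R$ together with the order bookkeeping is exactly what bypasses this, and it is where hypothesis (\ref{itii}) is indispensable.
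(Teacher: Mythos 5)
Your proof is correct, and for the central step it takes a genuinely different route from the paper's. The paper shows that $H$ absorbs some $E_i$ by a topological argument: it passes to the closure $H'$ of $H$ in $\Q_p^2$, notes that $H'$ is non-compact, and shows that $\bigcap_{n\ge 0}p^nH'$ is a nonzero closed, $p$-divisible, $M_0$-invariant subgroup, hence a nonzero $M_0$-invariant $\Q_p$-subspace, which by diagonalizability contains an eigenline $\DD_i$; a short approximation argument then pulls $E_i$ back into $H$. You instead argue purely algebraically inside $Q=\Q_p^2/\Z_p^2$: the spectral projections $\pi_i$ become, after multiplication by $p^{m}$ with $m=v_p(\lambda_1-\lambda_2)$, elements of $\Z_p[M_0]$ that stabilize $\ov{H}$ and land in $\ov{E_i}$, and a pigeonhole on orders of elements produces an infinite (hence full) subgroup of $\ov{E_{i_0}}\simeq\Z(p^\infty)$ inside $\ov{H}$. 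Both arguments use the diagonalizability over $\Q_p$ in the same essential way (two distinct eigenvalues in $\Z_p^\times$), but yours replaces the compactness/divisibility argument by the integrality estimate $p^m\pi_i\in\Z_p[M_0]$, making the role of the discriminant valuation $m$ explicit, whereas the paper's closure argument is softer and never computes with $\pi_i$. The endgames also differ in form but not in substance: the paper takes the largest $n$ with $p^{-n}\Z^2\subset H$ and derives a contradiction from the $p$-torsion of $H/L$, while you quotient by $\ov{E_1}$ and use that the subgroups of $Q/\ov{E_1}\simeq\Z(p^\infty)$ form a single chain. One small point worth spelling out: the elements of large order that you actually place in $\ov{H}\cap\ov{E_{i_0}}$ are $p^{m}\pi_{i_0}x_\ell$, whose orders are only bounded below by $p^{\ell-m}$ rather than $p^{\ell}$; since $m$ is fixed this still gives unbounded order, but as written the sentence conflates the order of $\pi_{i_0}\tilde x_\ell\bmod\Z_p^2$ with that of its multiple by $p^m$.
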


\begin{proof} We first claim that $H$ contains either $E_1$ or $E_2$. Let $H'$ be the closure of $H$ in $\Q_p^2$. The assumption $H/\Z^2$ infinite implies that $H'$ is not compact. Let us first check that $H'$ contains either $\mathcal{D}_1$ or $\mathcal{D}_2$. We begin by recalling the elementary fact that any closed, $p$-divisible subgroup of $\Q_p$ is a $\Q_p$-linear subspace: indeed any closed subgroup is a $\Z_p$-submodule, which $p$-divisibility forces to be a $\Q_p$-submodule.
Since $H'$ is not compact, for any $n\ge 0$, $p^nH'$ has nontrivial intersection with the 1-sphere in $\Q_p^2$. Therefore, by compactness, $\bigcap_{n\ge 0}p^nH'$ is a nonzero $M_0$-invariant and $p$-divisible closed subgroup and is therefore a $\Q_p$-subspace of $\Q_p^2$, thus contains some $\mathcal{D}_i$.

Thus $\mathcal{D}_i+\Z_p\subset H'$. Let us check that $E_i\subset H$. Since $E_i$ is contained in the closure $H'$ of $H$, for all $x\in E_i$ there exists a sequence $x_n\in H$ such that $x_n$ tends to $x$. But $x_n-x\in\Z[1/p]^2$ and tends to zero in $\Q_p^2$, so eventually belongs to $\Z^2$, which is contained in $H$. So for large $n$, $x_n-x$ and hence $x$ belongs to $H$ and $E_i\subset H$, so the claim is proved.

If $H\neq\Z[1/p]^2$, consider the largest $n$ such that $L=p^{-n}\Z^2\subset H$. We wish to conclude that $H=L+E_i$. The inclusion $\supset$ is already granted. 
Observe that $H/L$ contains a unique subgroup of order $p$. But an immediate verification shows that any subgroup of $\Z(p^\infty)^2$ strictly containing a subgroup isomorphic to $\Z(p^\infty)$, has to contain all the $p$-torsion. If $H$ contains $L+E_i$ as a proper subgroup, we apply this to the inclusion of $(L+E_i)/L\simeq\Z(p^\infty)$ into $H/L$ inside $\Z[1/p]^2/L\simeq\Z(p^\infty)^2$ and deduce that $H/L$ contains the whole $p$-torsion of $\Z[1/p]^2/L$, so that $H$ contains $p^{-(n+1)}\Z^2$, contradicting the maximality of $n$. So $H=L+E_i=p^{-n}\Z^2+E_i$ and we are done.
\end{proof}

\begin{proof}[Conclusion of the proof of Theorems \ref{ThCBone} and \ref{thm:Infinitely-related-CB-rank-1}]
Clearly, as a quotient of the finitely generated group $\bar{A}\rtimes C$ by $\bar{V_1}\simeq\Z(p^\infty)$, which is the increasing union of its finite characteristic subgroups and therefore is not finitely generated as a normal subgroup, the group $B$ is not finitely presentable.

Let $N$ be a normal subgroup of $\bar{A} \rtimes C$ that lies in the neighbourhood $\VV$ 
defined by equation \eqref{eq:Definition-VV}.  
Then Lemma \ref{normalabels2} (applied to the inverse image of $N\cap \bar{A}$ in $A\rtimes C$) implies that $N\cap\bar{A}\subset \bar{Z}$. Since $\bar{A}/\bar{Z}=A/Z$ contains its own centralizer in $A/Z\rtimes C$, it follows that $N\subset\bar{Z}$.

Assume now that $N$ is infinite. By Lemma \ref{LemEigenLines}, we have $H=\bar{Z}$, or $H=(p^{-n}Z_0+V_i)/Z_0$ for some $n\ge 0$ and $i\in\{1,2\}$. But all these groups have nonempty intersection with $W$, except $\bar{V_1}$ itself. Thus the only infinite subgroup $N$ in $\mathcal{V}$ is $\bar{V_1}$.

As finite subgroups of $\bar{Z}$ are isolated in the space of subgroups of $\bar{Z}\simeq\Z(p^\infty)^2$ by an easy argument (see for instance \cite[Prop.~2.1.1]{CGP10}), it follows that $\VV-\{\bar{V_1}\}$ consists of isolated points; clearly $\bar{V_1}$ is not isolated as it can be described as the increasing union of its characteristic subgroups and it follows that $\bar{V_1}$ has Cantor-Bendixson rank one in $\VV$, and therefore $\bar{V_1}$ has Cantor-Bendixson rank one in the space of normal subgroups of the finitely presentable group $\bar{A}\rtimes C$, so $B=(\bar{A}\rtimes C)/\bar{V_1}$ has Cantor-Bendixson rank one.

To conclude the assertion in Theorem \ref{thm:Infinitely-related-CB-rank-1} that $B$ is nilpotent-by-abelian, observe that the obvious decomposition $A=U(A)\rtimes D$ (where $D$ is the set of diagonal matrices in $A$) extends to a semidirect decomposition $A\rtimes C=U(A)\rtimes (D\times C)$, which is nilpotent-by-abelian and admits $B$ as a quotient.
\end{proof}

\begin{remarks}
\label{remarks:Comment-proof-thm-3.7}
(a) The trivial subgroup of $B$ is isolated in $\NN(B)$ (i.e., $B$ is finitely discriminable, see \S \ref{parat}), for the above proof shows that any nontrivial normal subgroup of $B$ contains the unique subgroup of order $p$ in $Z/V_1$.

(b) If we require that modulo $p$, the matrix $M_0$ has two distinct eigenvalues, then Theorem \ref{ThCBone} can be slightly improved: first it is enough to pick $W=\{w\}$, where $w$ is an element of order $p$ in $\bar{V_2}$, and the set $\VV$ consists only of $\bar{V_1}\simeq\Z(p^\infty)$ and its finite subgroups. Note that without this further assumption, it can happen that the intersection $\bar{V_1}\cap\bar{V_2}$ be nontrivial, and even if this intersection is trivial, the finite subgroups in $\mathcal{V}$ are not necessarily all contained in $\bar{V_1}$.
\end{remarks}

\section{Independent families and presentations}\label{secip}
This section centers around the concepts of an independent family of normal subgroups 
and of an infinitely independently presentable group (INIP group); 
they have been introduced in Definitions \ref{definition:independent-family} and \ref{definition:INIP}.
We establish, in particular, a criterion for INIP groups based on the Schur multiplier.
\smallskip

\emph{Notation. } Let $\{N_i \mid i \ I \}$ be a family of normal subgroups of a group $G$.
For every subset $J \subseteq I$ we denote by $N_J$ the normal subgroup of $G$ generated by the normal subgroups $N_j$ with $j \in J$. 

\subsection{Infinitely independently presentable groups}\label{INIP-groups} 
We begin with a remark. 
A well-known result of B. H. Neumann states 
that whether or not a finitely generated group can be defined by a finite set of relations 
does not depend on the choice of the finite generating system (\cite[Cor.\,12]{Neu37}).
Infinitely independently presented groups enjoy a similar property:
\begin{lemma}\label{indpre}
Let $G$ be a finitely generated group 
and $\pi \colon F \epi G$ an epimorphism of a free group $F$ of finite rank onto $G$.
If $R = \ker \pi$ is generated 
by the union of an infinite independent family of normal subgroups $R_i \triangleleft F$,
the kernel of every projection $\pi_1 \colon F_1 \epi G$ with $F_1$ a free group of finite rank
has this property.
\end{lemma}

\begin{proof}
It suffices to establish, for every epimorphism  $\rho \colon \tilde{F} \epi F$ of a free group $\tilde{F}$ of finite rank, 
that the kernel $R = \ker \pi$ is generated by an infinite independent family of normal subgroups of $F$ 
if, and only if, $\tilde{R} = \ker ( \rho \circ \pi)$ has this property.
Assume first that $R$ is generated by a family $\{R_i \mid i \in I\}$. 
Then $\tilde{R}$ is generated by the family of normal subgroups $\rho^{-1}(R_i)$ 
and this family is independent.
Conversely, assume $\tilde{R}$ is generated by an infinite independent family $\{\tilde{R}_i \mid i \in I \}$.
The kernel of $\rho$ is finitely generated as a normal subgroup of $\tilde{F}$ and contained in $\tilde{R}$.
So there exists a finite $J \subset I$ so that $\ker \rho \subseteq \tilde{R}_J$.
For $i \in I \smallsetminus J$, set $R_i  = \rho(\tilde{R}_i \cdot \tilde{R}_J) $.
Then $\{R_i \mid i \in I \smallsetminus J \}$ 
is an infinite independent family of normal subgroups generating $R$.
\end{proof}

\subsubsection{Schur multiplier}\label{schur}
The next result exhibits a first connection between infinite independent families of subgroups 
and infinitely independently presented groups.
\begin{proposition}\label{p_schur}
Let $G$ be a finitely generated group 
and assume that the Schur multiplier $H_2(G,\Z)$ is generated by an infinite independent family of subgroups.
Then $G$ is infinitely independently presentable.
\end{proposition}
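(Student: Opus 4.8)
The plan is to realize $G$ as INIP relative to a free group of finite rank, transporting the non-minimaxity of the Schur multiplier into an independent family of relators via Hopf's formula. First I would fix a presentation $G=F/R$ with $F$ free of finite rank (possible since $G$ is finitely generated; note that $F$ is finitely presented, having no relators), and pass to the free central extension $\bar F=F/[F,R]$. Because $R\subseteq F$ we have $[R,R]\subseteq[F,R]$, so $\bar R:=R/[F,R]$ is abelian; moreover $[F,R]$ absorbs every commutator $[x,r]$ with $x\in F$ and $r\in R$, so $\bar R$ is \emph{central} in $\bar F$. The key input is Hopf's formula, which identifies $H_2(G,\Z)$ with $(R\cap[F,F])/[F,R]$, a subgroup of $\bar R$. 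Since subgroups of minimax abelian groups are again minimax, the hypothesis that $H_2(G,\Z)$ is not minimax forces $\bar R$ to be non-minimax as well.

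Next I would feed $\bar R$ into Lemma \ref{abq}. Being non-minimax, $\bar R$ admits a quotient $\pi\colon\bar R\epi\bigoplus_{i\in I}A_i$ that is an infinite direct sum of nonzero groups. Setting $\bar M_i:=\pi^{-1}(A_i)\subseteq\bar R$ (the preimage of the $i$-th summand, so that each $\bar M_i$ contains $\ker\pi$), one checks immediately that $\sum_{i\in I}\bar M_i=\pi^{-1}\big(\bigoplus_{i}A_i\big)=\bar R$, while $\pi(\bar M_i)=A_i$ is not contained in $\pi\big(\sum_{j\neq i}\bar M_j\big)=\bigoplus_{j\neq i}A_j$ because $A_i\neq 0$; hence $\bar M_i\not\subseteq\sum_{j\neq i}\bar M_j$ for every $i$. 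By the injectivity criterion of Lemma \ref{stancon} this already makes $(\bar M_i)_{i\in I}$ an independent family of subgroups of $\bar R$ with $\bar M_I=\bar R$, and $I$ is infinite.

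Finally I would lift this family to $F$. Since $\bar R$ is central, each $\bar M_i$ is normal in $\bar F$, so its full preimage $M_i\triangleleft F$ under the quotient map $q\colon F\epi\bar F$ is a normal subgroup of $F$ squeezed as $[F,R]\subseteq M_i\subseteq R$. Because $q(M_I)=\sum_i\bar M_i=\bar R$ and $\ker q=[F,R]\subseteq M_I$, I get $M_I=q^{-1}(\bar R)=R$; and from $q(M_{I\smallsetminus\{i\}})=\sum_{j\neq i}\bar M_j\subsetneq\bar R$ I get $M_{I\smallsetminus\{i\}}\subsetneq R=M_I$, which forces $M_i\not\subseteq M_{I\smallsetminus\{i\}}$ (otherwise the two would coincide). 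By Lemma \ref{stancon} the family $(M_i)_{i\in I}$ is independent; as $I$ is infinite and $M_I=R=\ker(F\epi G)$, the group $G$ is INIP relative to the finitely presented group $F$, and Lemma-Definition \ref{indpre} then yields that $G$ is INIP.

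The only genuinely conceptual point — and the main obstacle — is that the central extension $\bar F=F/[F,R]$ which naturally carries the independence is \emph{not} finitely presented when $H_2(G,\Z)$ is large, so it cannot serve as the finitely presented extension required by the definition. The resolution is to realize the entire independent family already among normal subgroups of the free group $F$ itself, exploiting that every subgroup of the central, non-minimax group $\bar R$ pulls back to a normal subgroup of $F$ trapped between $[F,R]$ and $R$; once this is set up, all remaining verifications are routine applications of the criterion in Lemma \ref{stancon}.
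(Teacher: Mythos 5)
Your proof is correct and follows essentially the same route as the paper: both embed $H_2(G,\Z)$ into $R/[F,R]$ (you via Hopf's formula, the paper via the five-term homology exact sequence with $H_2(F,\Z)=0$ — the same fact), then apply Lemma \ref{abq} to get an infinite direct sum quotient and pull the summands back to an independent family of normal subgroups of $F$ generating $R$. Your explicit verification of independence via Lemma \ref{stancon} is a correct elaboration of a step the paper leaves implicit.
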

\begin{proof}
Let $F$ be a finitely generated free group with an epimorphism $\pi\colon  F\epi G$; 
let $R$ denote its kernel. 
The extension $R\mono F\epi G$ gives rise to an exact sequence in homology
$$
H_2(F,\Z)\to H_2(G,\Z)\to R/[R,F]\to F_{\ab}\stackrel{\pi_{\ab}}\epi G_{\ab}
$$
(see for instance \cite[Cor.~8.2]{HiSt97}). 
Since $F$ is free, its multiplier is trivial, so the above exact sequence leads to the extension
$$
H_2(G,\Z)\mono R/[R,F]\epi \ker \pi_{\ab}.
$$
Since $F_{\ab}$, and hence $\ker \pi_{\ab}$, are free abelian, this sequence splits; 
say $R/[R,F] = H_2(G, \Z) \oplus B$.
By hypothesis, the direct summand $H_2(G,\Z)$ is generated by an infinite, independent family of subgroups, 
say $\{A_i \mid i \in I\}$. 
Then $i \mapsto A_i \oplus B$ is an infinite independent family of subgroups generating $R/|R,F]$.
Its preimages under the canonical map $R \epi R/[R,F]$ form then a family 
which reveals that $G = F/R$ is INIP.
\end{proof}

\begin{remark} \label{Multiplicator-fg-generated}
The multiplicator of a countable group, in particular of a finitely generated group, is countable.
The index set $I$ of the family $\{A_i \mid i \in I\}$ occurring in the above proof  
can therefore be taking to be $\N$.
\end{remark}

\begin{example}
Examples of finitely generated groups satisfying the assumptions of Proposition \ref{p_schur} (and thus INIP) are groups of the form $G=F/[R,R]$ with $F$ is free and $R$ is an arbitrary normal subgroup of infinite index. 
Indeed, it is shown in \cite{BST80} 
that $H_2(G,\Z)$ has then a free abelian quotient of infinite rank.
\end{example}

\begin{example}
The first Grigorchuk group $\Gamma$ is an example of a group with growth strictly between polynomial and exponential \cite{Gri84}.
In \cite{Gri99}, Grigorchuk proves that $\Gamma$ has an infinite minimal presentation, and more precisely that the ``Lys\"enok presentation" is minimal, by showing that it projects to a basis of the Schur multiplier $H_2(G,\Z)$, which is an elementary 2-group of infinite rank.
\end{example}

\subsubsection{Characterization of abelian groups with an infinite independent family of subgroups}
\label{Characterization-abelian-groups-with-INI-family}
In the statement of Proposition \ref{p_schur}, 
the multiplier is assumed to be generated by an infinite independent family of subgroups.
This requirement admits several reformulations.
We first recall the notion of an (abelian) minimax group: 
an abelian group is called \emph{minimax} if it has a finitely generated subgroup $B$ 
such that $A/B$ is artinian, i.e.\ satisfies the descending chain condition on subgroups
(see \cite{Rob96}, Exercice 4.4.7 and Theorem 4.2.11 for more details).%

\begin{lemma}\label{abq}
Let $A$ be an abelian group. The following are equivalent
\begin{enumerate}[(i)]
\item\label{aa1} $A$ is not minimax;
\item\label{aaq} the poset $(\mathcal{N}(A),\subset)$ contains a subposet isomorphic to $(\Q,\le)$;
\item\label{aap} the poset $(\mathcal{N}(A),\subset)$ 
contains a subposet isomorphic to the power set $(2^\Z,\subset)$;
\item\label{aa2} $A$ has an infinite independent sequence of (normal) subgroups;
\item\label{aa3} $A$ has a quotient that is an infinite direct sum of nonzero groups.
\end{enumerate}
\end{lemma}
\begin{proof}
It is clear that \eqref{aa3} implies \eqref{aa2} 
and biimplication \eqref{aa2} $\Leftrightarrow$ \eqref{aap} is immediate
from Definition \ref{definition:independent-family}.
Now to implication \eqref{aap} $\Rightarrow$ \eqref{aaq}.
The poset $(2^\Z,\subset)$ is isomorphic to $(2^\Q,\subset)$ 
(an isomorphism being induced by any bijection of $\Z$ onto $\Q$). 
The power set $2^\Q$ obviously contains the set of intervals $\mathopen]-\infty,s]$ 
with $s$ ranging over rationals. It follows that  $(\Q,\le)$ is a subposet of $(2^\Q,\subset)$,
and hence of the poset  $(2^\Z,\subset)$.

We continue with implication \eqref{aaq} $\Rightarrow$ \eqref{aa1}.
Suppose $A$ is an abelian group so that the poset $(\mathcal{N}(A),\subset)$ 
contains an infinite,  linearly and densely ordered subset.
Then $A$ does not satisfy the descending chain condition 
and neither does a quotient group $A/B$ of $A$ modulo a finitely generated subgroup $B$.
So $A$ is not minimax.
\smallskip

We are left with proving that (\ref{aa1}) implies (\ref{aa3}). 
We first claim that $A$ has a quotient that is torsion and not minimax, hence not artinian. 
Indeed, let $(e_i)_{i\in I}$ be a maximal $\Z$-free family in $A$, generating a free abelian group $B$. Then $A/2B$ is torsion and is not minimax: 
indeed if $I$ is finite, $2B$ is minimax. but not $A$, and so $A/2B$ is not  minimax
(the property of being minimax is closed under extensions); 
if $I$ is infinite, $A/2B$ admits $B/2B\simeq(\Z/2\Z)^{(I)}$ as a subgroup so is not minimax.

Replacing, if need be,  $A$ by the quotient $A/2B$, 
we can thus assume that $A$ is torsion. 
If $A$ admits $p$-torsion for infinitely many $p$'s, 
it is itself an infinite direct sum (of its $p$-primary components) and we are done. 
If, on the other hand,
$A$ has $p$-torsion for only finitely many primes $p$, 
one of the $p$-primary component will not be minimax;
by replacing $A$ by this $p$-primary component,
we can suppose that $A$ is a $p$-group for some prime $p$.

If $A/pA$ is infinite, it an infinite direct sum of copies of $\Z/p\Z$ and we are done. 
Otherwise, let $F$ be the subgroup generated by a transversal of $pA$ in $A$; 
$F$ is finite and $A/F$ is divisible and non-minimax, hence isomorphic to $\Z(p^\infty)^{(I)}$ for some infinite $I$. 
\end{proof}

Note that the proof readily shows that every abelian non-minimax group $A$ has a quotient 
which is isomorphic to either $(\Z/p\Z)^{(I)}$, $\Z(p^\infty)^{(I)}$ for some infinite set $I$ and some prime $p$, 
or is a direct sum $\bigoplus_{p\in P}\Z/p\Z$ or $\bigoplus_{p\in P}\Z(p^\infty)$ for some infinite set $P$ of primes.
Note also that an abelian group that is not minimax
can be very far from being or containing a nontrivial infinite direct sum, e.g.\ $\Q$ is such an example.
\subsection{Groups with an infinite independent family of normal subgroups}
\label{Groups-with-independent-family}
There are many types of finitely generated groups with an infinite independent family of normal subgroups.
Here we restrict ourselves to two particular classes of such groups; 
both of them consist actually of  intrinsic condensation groups (see Lemma \ref{indcon}).

\subsubsection{Normal subgroups which are infinite direct products}
\label{Normal-but-infinite-direct-sums}
Let $G$ be a group which has a normal subgroup 
that can be written as as infinite (restricted) direct product  $\bigoplus_{i \in I} H_i$, 
each $H_n$ being a nontrivial normal subgroup of $G$. 
The family $\{H_i \mid i \in I\}$  is clearly independent;
it has the additional property that for every finite subset $\FF' \subset G$ 
there exists a finite subset $J \subset I$ such $\{H_i \mid i \in I \smallsetminus J\}$ 
is an infinite independent family which generates a normal subgroup that is disjoint from $\FF'$.

Here are some examples of such groups:
\begin{itemize}
\item Any group $G$ whose centre $Z$ contains an infinite direct sum. 
Finitely presented examples (which are in addition solvable) appear in \cite[Sec.~2.4]{BGS86}.

\item The two generator groups introduced by B.\,H.\,Neumann in \cite[Thm.~14]{Neu37}.
Each of them contains an infinite direct product  of finite alternating groups $\bigoplus H_n$ as normal subgroups
(with each $H_n$ normal in $G$). 

\item Any group which is isomorphic to proper direct factor of itself, 
i.\,e., $G$ isomorphic to $G\times H$ where $H$ is a nontrivial group: indeed, if $\phi$ is an injective endomorphism of $H\times G$ with image $\{1\}\times G$, then we can set $H_n=\phi^n(H\times\{1\})$, and clearly $H_n$ is normal and the $H_n$ generate their direct sum. Finitely generated examples of such groups (actually satisfying the stronger requirement $G\simeq G\times G$) are constructed in \cite{Jon74}. Note that a group isomorphic to a proper direct factor is in particular isomorphic to a proper quotient, i.e.\ is {\it non-Hopfian}. However, a finitely generated non-Hopfian group does not necessarily contain a normal subgroup that is an infinite direct sum of non-trivial normal subgroups (anticipating again on Section \ref{sec:con}, it is not even necessarily of intrinsic condensation). Indeed, if $n\ge 3$ and $A_n$ is the Abels' group introduced in Section \ref{sec:Group-with-CB-rank-1} and $Z_0$ an infinite cyclic subgroup of its centre, then $A_n/Z_0$ is not Hopfian 
(the argument is due to P. Hall for $A_3$ and extends to all $n$ \cite{Hal61,Abe79}), 
but Lyulko's result (Proposition \ref{CorCountQuot}) implies that $A_n$ has only countably many subgroups.
\end{itemize}

\subsubsection{Independent varieties}\label{aovl}
The following result is due to Adian, Olshanskii and Vaughan-Lee independently
(see \cite{Adj70,Ols70,VaL70}). 
It will be used in a crucial way in the proof of Theorem \ref{thm:Non-contraction}.

\begin{theorem}[Adian, Olshanskii, Vaughan-Lee]\label{ols}
If $F$ is a free group over at least two generators (possibly of infinite rank), then $F$ has uncountably many fully characteristic subgroups (i.e.\ stable under all endomorphisms). 
Actually, it has a countably infinite  independent family of fully characteristic subgroups. 
\end{theorem}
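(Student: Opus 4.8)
The plan is to reduce the statement to the existence of an infinite \emph{independent sequence of group laws}, and then to invoke the hard construction of the cited authors for that reduced statement. The first step is purely formal and rests on the dictionary between fully characteristic subgroups and laws: a subgroup of $F$ is fully characteristic if and only if it is \emph{verbal}, i.e.\ generated by all endomorphic images of some set of words, and the join of fully characteristic subgroups is again fully characteristic (it is the verbal subgroup attached to the union of the defining sets of words). Consequently, if I produce single-word verbal subgroups $N_i=\langle u_i\rangle_{\mathrm{vb}}$, the standard map $\Phi\colon 2^I\to\NN(F)$ of Section \ref{secip} takes values among fully characteristic subgroups, and by Lemma \ref{stancon} the family $(N_i)$ is independent as soon as, for every $i$, the law $u_i$ does not lie in $N_{I\smallsetminus\{i\}}$. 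An independent infinite sequence then delivers $2^{\aleph_0}$ distinct fully characteristic subgroups, since $J\mapsto N_J$ is injective on $2^I$; so both assertions of the theorem follow from a single independent family of laws.

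Second I would reduce this ``non-membership'' to the production of separating groups. Since $N_{I\smallsetminus\{i\}}$ is fully characteristic, one has $N_i\subseteq N_{I\smallsetminus\{i\}}$ if and only if $u_i\in N_{I\smallsetminus\{i\}}$, and to refute the latter inside $F$ it suffices to exhibit one group $G_i$ in which all the laws $u_m$ ($m\neq i$) hold identically while $u_i$ fails through a substitution using the images of (two of) the free generators of $F$. Indeed, such a $G_i$ gives a homomorphism $F\to G_i$ under which $N_{I\smallsetminus\{i\}}$ dies but $u_i$ does not, so $u_i\notin N_{I\smallsetminus\{i\}}$. After this reduction the whole content of the theorem becomes the construction of a family of laws $(u_n)$ together with witnessing groups $(G_n)$ with this ``switching'' property, the laws being kept in at most two variables so that the witnesses already separate them inside a free group of rank $2$.

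The hard part is precisely this last construction, and here there is no soft argument: one must manufacture a supply of groups rich enough that each law can be independently turned on and off. I would defer to the mechanisms of \cite{Adj70,Ols70,vaughan1970uncountably}. One route, in the spirit of Vaughan--Lee, uses finite nilpotent groups of unbounded class, taking the $u_n$ to be multilinear commutator words whose validity is governed by the structure constants of the last nonzero term of the lower central series; another, in the spirit of Adian and Olshanskii, exploits the periodic groups of large odd exponent furnished by the Novikov--Adian theory, tailoring the $u_n$ so that each fails in exactly one group of the family. I expect the main obstacle to be exactly the combination of two requirements that pull against each other: the witness groups must sit in the prescribed varieties, yet the distinguishing laws must remain genuinely two-variable so as to meet the rank-$2$ hypothesis. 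It is this tension that forces the deep machinery and explains why the result was obtained only with substantial effort and independently by three authors.
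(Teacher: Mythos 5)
The paper offers no proof of Theorem~\ref{ols}: it is stated with attribution and cited from \cite{Adj70,Ols70,vaughan1970uncountably}, and is used downstream as a black box. So there is no internal argument to compare yours against; what your proposal adds is the soft reduction from the statement about fully characteristic subgroups of $F$ to the existence of an infinite independent system of group laws in two variables, and that reduction is correct. Taking $N_i$ to be the verbal subgroup of a two-variable word $u_i$, each $N_J$ is verbal, hence fully characteristic; by Lemma~\ref{stancon} independence amounts to $u_i\notin N_{I\smallsetminus\{i\}}$ for each $i$, and a witness group $G_i$ satisfying all laws $u_m$ ($m\neq i$) but not $u_i$ refutes this membership via a homomorphism $F\to G_i$ killing $N_{I\smallsetminus\{i\}}$. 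You also correctly locate the genuine constraint: the mere existence of uncountably many varieties (equivalently, of uncountably many fully invariant subgroups of the free group of countably infinite rank) does not by itself give the rank-$2$ conclusion, since distinct varieties can agree on all two-generator groups; one needs the distinguishing laws to live in two variables, as they do in Adian's system. Two caveats. First, your opening biconditional ``fully characteristic if and only if verbal'' is an overstatement for free groups of \emph{finite} rank --- whether every fully invariant subgroup of $F_n$ is verbal is a well-known open problem --- but this is harmless here, because you only use the easy direction: verbal subgroups are fully characteristic, and the verbal closure of a word in at most $\mathrm{rank}(F)$ variables lies in every fully characteristic subgroup containing that word. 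Second, your proposal is, by design, not self-contained: the entire difficulty sits in the construction of the laws and witness groups, which you defer to the cited papers. That is exactly what the present paper does as well, so the proposal is consistent with it, but it should be read as a reduction plus citation rather than a proof.
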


\begin{corollary}\label{ontofree}
If $H$ is a group admitting an epimorphism onto a non-abelian free group $F$, then $H$ has at least continuously many fully characteristic subgroups. 
\end{corollary}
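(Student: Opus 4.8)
The plan is to transport fully characteristic subgroups \emph{forward} along the epimorphism $\pi\colon H\epi F$, not backward: pulling back fails, because the preimage under $\pi$ of a characteristic subgroup of $F$ need not be characteristic in $H$ (automorphisms of $H$ need not descend to $F$). For a fully characteristic subgroup $V\le F$ I would instead form its forward image
$$V(H)=\langle\,\psi(V)\mid\psi\in\Hom(F,H)\,\rangle\le H,$$
the subgroup of $H$ generated by all homomorphic images of $V$ (equivalently, the verbal subgroup of $H$ determined by the words making up $V$).

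Two properties of this construction do all the work. First, $V(H)$ is fully characteristic in $H$: for any endomorphism $\theta\colon H\to H$ one has $\theta(\psi(V))=(\theta\circ\psi)(V)$ with $\theta\circ\psi\in\Hom(F,H)$, whence $\theta(V(H))\subseteq V(H)$. Second, $\pi(V(H))=V$. Indeed $\pi(V(H))$ is generated by the subgroups $(\pi\circ\psi)(V)$; since $F$ is free and $\pi$ is onto, every endomorphism $\alpha$ of $F$ lifts to some $\psi\in\Hom(F,H)$ with $\pi\circ\psi=\alpha$ (lift the images of a basis), so $\pi\circ\psi$ ranges over all of $\operatorname{End}(F)$, giving $\pi(V(H))=\langle\alpha(V)\mid\alpha\in\operatorname{End}(F)\rangle=V$ because $V$ is fully characteristic and contains $\mathrm{id}(V)$. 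It follows at once that $V\mapsto V(H)$ is \emph{injective} on fully characteristic subgroups of $F$: each such $V$ is recovered as $V=\pi(V(H))$, so distinct $V$ yield distinct $V(H)$.

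It remains to feed continuum-many fully characteristic subgroups of $F$ into this injection. Theorem~\ref{ols} provides an independent infinite sequence $(V_i)_{i\in\mathbf N}$ of fully characteristic subgroups of $F$. For each $J\subseteq\mathbf N$ the subgroup $V_J=\langle\bigcup_{i\in J}V_i\rangle$ is again fully characteristic, since any endomorphism of $F$ maps each $V_i$ into itself and hence preserves the subgroup they generate; and by independence the map $J\mapsto V_J$ is injective. This already yields $2^{\aleph_0}$ fully characteristic subgroups of $F$, and composing with $V\mapsto V(H)$ produces $2^{\aleph_0}$, i.e.\ continuously many, fully characteristic subgroups of $H$, as required.

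The only point requiring real care is the identity $\pi(V(H))=V$, i.e.\ that $\pi$ commutes with the forward-image construction; this is precisely where both the surjectivity of $\pi$ and the freeness of $F$ (to lift endomorphisms) are used. I would stress that no section of $\pi$ is needed: plain surjectivity, together with Theorem~\ref{ols}, suffices.
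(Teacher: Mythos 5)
Your proof is correct, but it goes in the opposite direction from the paper's. The paper transports a fully characteristic subgroup $N$ of $F$ to $H$ by the ``residual'' construction $\Phi(N)=\bigcap\{\ker h \mid h\colon H\to F/N\}$, and proves injectivity of $\Phi$ by exhibiting an explicit left inverse $\Psi(M)=\bigcap\{\ker f\mid f\colon F\to H/M\}$, the verification of $\Psi\circ\Phi=\mathrm{id}$ using a section $\iota\colon F\mono H$ of $\pi$ and a lifting argument. You instead push forward by the verbal-type construction $V(H)=\langle\psi(V)\mid\psi\in\Hom(F,H)\rangle$ and recover $V$ simply as $\pi(V(H))$, which makes the injectivity a one-line consequence; the key identity $\{\pi\circ\psi\mid\psi\in\Hom(F,H)\}=\operatorname{End}(F)$ is correctly justified by lifting a basis (note, though, that lifting the identity of $F$ through $\pi$ \emph{is} producing a section, so your closing remark that no section is needed is more a matter of presentation than of substance --- the same freeness-plus-surjectivity input is used in both proofs). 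Your argument is arguably cleaner, and you are also more careful than the paper on the cardinality point: rather than quoting ``uncountably many'' from Theorem~\ref{ols}, you use the independent sequence $(V_i)$ and the fully characteristic subgroups $V_J$, $J\subseteq\N$, to get genuinely $2^{\aleph_0}$ inputs, which is what ``continuously many'' requires. One cosmetic caveat: the parenthetical identification of $V(H)$ with ``the verbal subgroup determined by the words making up $V$'' is only needed with $V$ itself taken as the set of words (whether every fully invariant subgroup of a free group of finite rank is verbal is a delicate question), but nothing in your argument depends on it.
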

\begin{proof}
Our aim is to construct, given a fully characteristic subgroup $N$ of $F$,
a fully characteristic subgroup $\Phi(N)$ of $H$ in such a way that the function $N \mapsto \Phi(N)$ is injective. Theorem \ref{ols} will then imply the conclusion of Corollary \ref{ontofree}.

We begin by fixing the notation.
For a group $G$, let $\mathcal{FCH}(G)$ denote the set of its fully characteristic groups.
Next, given a group $L$,
let $U_G(L)$ denote the intersection of the kernels of all homomorphisms $g \colon G\to L$ from $G$ into $L$; 
in symbols
\[
U_G(L) = \bigcap \;\{\ker g \mid g \colon G \to L \}.
\]
It is easily checked that $U_G(L)$ is a fully characteristic subgroup of $G$.

We apply this construction twice: 
first with $G = H$ and $L = F/N$ where $N \in \mathcal{FCH}(F)$
and obtain the group $\Phi(N) \in \mathcal{FCH}(H)$; in symbols,
\begin{equation}\label{eq:Def-Phi}
\Phi(N) = U_H(F/N) = \bigcap \;\{\ker h \mid h \colon H \to F/N \}.
\end{equation}
Secondly, we apply it with $G =F$ and $L = H/M$ where $M \in \mathcal{FCH}(H)$,
obtaining the fully characteristic subgroup
\begin{equation} \label{eq:Def-Psi}
\Psi(M) = U_F(H/M) = \bigcap \;\{\ker f \mid f \colon F \to H/M \}
\end{equation}
of $F$. We claim that $\Psi \circ \Phi$ is the identity of $\mathcal{FCH}(F)$.

Given $N \in \mathcal{FCH}(F)$, set $M = \Phi(N)$ and $N' = \Psi(M)$.
We first show that $N' \subseteq N$. 
Since $F$ is free, the projection $\pi \colon  H \epi F$ admits a section $\iota \colon F \mono H$.
By \eqref{eq:Def-Psi}, 
the group $N' = \Psi(M)$ is contained in the kernel of the homomorphism
$f_0 = \can_M \circ \iota \colon F \mono H \epi H/M$. 
Next, by \eqref{eq:Def-Phi},
the group  $M$ is contained in the kernel of the map
$h_0 = \can_N \circ \pi \colon H \epi F \epi F/N$.
Since $\ker h_0 = \pi^{-1}(N)$ it follows that
\[
N'  \subseteq \ker f_0 \subseteq \ker (F \mono H \epi H/(\pi^{-1}(N) ) 
= \iota^{-1}\left(\pi^{-1} (N)\right) 
= (\pi \circ \iota)^{-1}(N) = N.
\]
The opposite inclusion $N \subseteq N'$ will be proved by showing 
that $F \smallsetminus N' \subseteq  F \smallsetminus N$.
Suppose that $x \in F \smallsetminus N' = F \smallsetminus  \Psi(M)$. 
Then there exists a homomorphism 
$f \colon  F  \to H/M$ such that $f(x) \neq 1$.
Since $F$ is free, $f$ can be lifted to a homomorphism $\tilde{f} \colon F \to H$ with  $\tilde{f} (x) \notin M$.
By the definition of $M = \Phi(N)$, there exists a homomorphism $h \colon H \to F/N$ such that 
$h ( \tilde{f}(x)) \neq 1 \in F/N$. 
The composition $\varphi = h \circ \tilde{f} \colon F \to F/N$ then  lifts to an endomorphism 
$\tilde{\varphi}$ of $F$ with $\tilde{\varphi}(x) \notin N$.
But $N$ is fully characteristic, so it is mapped into itself by $\tilde{\varphi}$ and thus $x \in F \smallsetminus N$.
\end{proof}

\subsection{Finitely generated groups with no minimal presentation.}
\label{Fg-no-minimal-presentation}
The notion of a minimal presentation has been mentioned at the beginning of Section \ref{intro}.
Here is a formal definition:
\begin{definition}\label{minimal-presentation}
We say that a group presentation $\langle S\mid \mathcal{R}\rangle$ is \emph{minimal} 
if $S$ is finite and no relator $r \in \mathcal{R}$ is a consequence of the remaining relators.
If a group (finitely generated) $G$ admits such a presentation, it is called \emph{minimally presentable}.
\end{definition}

Definition \ref{minimal-presentation} can be rephrased like this:
let $S$ be a finite set, $F_S$ the free group on $S$ and $\mathcal{R}$ a subset of $ F_S $.
The presentation $\langle S\mid \mathcal{R}\rangle$ is minimal if and only if 
the family of normal subgroups  $\{N_r \mid r \in \mathcal{R} \}$ generated 
by the individual relators $r$ is independent.

\begin{example}
\label{example:minimal}
In \cite{Neu37} B. H. Neumann published the first, explicit examples of infinitely presented groups.
He considers the free group on $a$ and $b$, the commutator words
\[
c_n = [a^{-n}ba^{n}, b]
\]
and shows that for $I = \{2^k-1 \mid k \geq 1 \}$ 
no relator $c_i$ with $i \in I$ is a consequence of the commutators $c_j$ with $j \in I \smallsetminus \{i\}$
(see \cite[Thm.\;13]{Neu37}).
If follows that 
\[
\langle a, b \mid [a^{1 -2^k }ba^{2^k - 1}, b] = 1 \text{ for } k \geq 1 \rangle
\] 
is a minimal presentation of the group $G$ defined by the presentation; 
in particular, $G$ is infinitely presented.
The group defined by the presentation 
\[
\langle a, b \mid [a^{-n}ba^{n}, b] = 1 \text{ for } n \geq 1 \rangle
\]
is  the standard wreath product $\Z \wr C_\infty$ of two infinite infinite cyclic groups.
In \cite{Bau61}, 
G.\ Baumslag proves that this presentation is minimal (see, \cite{Str84} for a shorter proof).
\end{example}

Every finitely presentable group is minimally presentable, 
since we can remove successively redundant relators. 
If an infinitely presented group admits a minimal presentation, 
the family of relators has to be infinite countable 
and the group is INIP in the sense of definition \ref{definition:INIP}.

\subsubsection{Groups admitting no minimal presentation}
\label{No-minimal-presentation}
An infinitely presented group need not admit a minimal presentation.
Examples are provided by the central quotients $A_n/Z$ of the groups $A_n$, 
considered in section \ref{ssec:Abels-groups},
in view of the facts that $A_n$ is finitely presentable for $n \geq 4$,
the centre of $A_n$ is isomorphic to $\Z[1/p]$, its quotient $ Z = \Z[1/p]/\Z$ is divisible, 
and the following result:
\begin{proposition}\label{nominp}
Let $G$ be a finitely presentable group and $Z$ a central subgroup. Assume that $Z$ is divisible and non-trivial. Then $G/Z$ is not minimally presentable.
\end{proposition}
\begin{proof}
Since $Z$ is divisible and non-trivial, it is not finitely generated and therefore $G/Z$ is infinitely presented.
Let $F$ be a free group of finite rank with an epimorphism $F\epi G$ and let $K$ be its kernel. 
Let  $(u_n)_{n\ge 0}$ be a set of relators for $G/Z$ over $F$. 
Since $G/Z$ is infinitely presented, the set $(u_n)$ is infinite. 
Since $K$ is finitely generated as a normal subgroup, 
it is contained in the normal subgroup generated by $u_0$, \ldots, $u_k$ for some $k$. 
Let $M$ be the normal subgroup of $F$ generated by all $u_i$ for $i\neq k+1$. 
Then $M$ contains $K$, and thus $M/K$ can be identified with a subgroup $Z'$ of $Z$.
 The group $Z/Z'$ is generated by the image of $u_{k+1}$;
 being a quotient of $Z$, it is divisible and cyclic, hence trivial. 
 So $Z'=Z$, i\,.e.,  $M$ is the kernel of $F\epi G/Z$. 
 Therefore the set of relators $(r_n)$ is not minimal.
\end{proof}

\subsubsection{Infinitely independently presented groups with no minimal presentation}
\label{INIP-not-minimal}
The notion of infinitely minimally presentable groups is at first sight more natural than the notion of INIP groups; however, it is much less convenient to deal with. Consider, for instance, a finitely generated group $\Gamma$ that decomposes as a direct product $\Gamma=\Gamma_1\times\Gamma_2$. 
It is immediate that if $\Gamma_1$ is INIP, then so is $\Gamma$. 
Is it true that if $\Gamma_1$ is infinitely minimally presentable, then so is $\Gamma$?

The following result allows one to construct infinitely independently presentable groups 
which admit no minimal presentation.
\begin{corollary}
\label{cor:INIP-not-minimal}
Let $G$ be a finitely presentable group and $Z$ a central subgroup. 
Assume that $Z$ is divisible but not minimax. 
Then $G/Z$ is infinitely independently presentable, but not minimally presentable.
\end{corollary}
\begin{proof}
Combine Propositions \ref{p_schur} and \ref{nominp}.
\end{proof}
\begin{example}\label{e_oh}
In \cite{OuH07},
Ould Houcine proves that every countable abelian group embeds into the centre of a finitely presentable group. 
So there exist groups to which Corollary \ref{cor:INIP-not-minimal} applies. 
\end{example}

\section{Condensation groups}\label{sec:con}
 
\subsection{Generalities on condensation groups}

Recall that if $G$ is a group, we denote by $\mathcal{N}(G)$ the set of its normal subgroups; this is a closed (hence compact) subset of $2^G$.

We say that a finitely generated group $G$ is
\begin{itemize}
\item of {\em condensation} if given some (and hence any) marking of $G$ by $m$ generators, $G$ lies in the condensation part of $\mathcal{G}_m$;
\item of {\em extrinsic condensation} if for every finitely presentable group $H$ and any epimorphism $H\epi G$ with kernel $N$, there exist uncountably many normal subgroups of $H$ contained in $N$; 
\item of {\em intrinsic condensation} if $\{1\}$ lies in the condensation part of $\mathcal{N}(G)$ (i.e.\ $G$ lies in the condensation part of the set of its own quotients, suitably identified with $\mathcal{N}(G)$). The notion of intrinsic condensation was introduced in \cite[Sec.~6]{CGP07}.
\end{itemize}

Note that being of intrinsic condensation makes sense even if $G$ is not finitely generated. There is the following elementary lemma.

\begin{lemma}\label{condei}
For every finitely generated group $G$, the following statements hold: \begin{enumerate}[(1)]
\item\label{ec} If $G$ is of extrinsic condensation, then it is condensation and infinitely presented;
\item\label{ic} if $G$ is of intrinsic condensation, then it is condensation; 
if,moreover, $G$ is finitely presentable, the converse is true.
\end{enumerate}
\end{lemma}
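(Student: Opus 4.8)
The plan is to funnel both parts through one topological characterization of condensation points, valid in any compact metrizable totally disconnected space $X$ (hence in $\GG_m$ and in $\NN(G)$): a point $x$ lies in $\Cond(X)$ if and only if every neighbourhood of $x$ is uncountable. This is immediate from the Cantor-Bendixson facts recalled in \S\ref{sec:Elements-CB-theory}: if $x\notin\Cond(X)$ then $x$ lies in the open, countable set $X\smallsetminus\Cond(X)$, which is a countable neighbourhood; and if $x\in\Cond(X)$ then, $\Cond(X)$ being perfect, any neighbourhood $V$ meets it in a nonempty relatively open subset of a perfect Polish space, which is uncountable, so $V$ is uncountable. The second ingredient is the correspondence attached to a marking $\pi\colon F_m\epi G$ with kernel $R$: the preimage map $M\mapsto\pi^{-1}(M)$ is a homeomorphism from $\NN(G)$ onto the closed set $Y=\{S\triangleleft F_m\mid R\subseteq S\}$, sending the trivial subgroup to the point $R$ that represents $G$, and $Y$ is moreover a neighbourhood of $R$ exactly when $G$ is finitely presented (cf.\ \eqref{eq:Neighbourhoods-fr-group}).

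For part (\ref{ec}) I first settle finite presentability: were $G$ finitely presented, one could take $H=G$ with the identity epimorphism, whose kernel $N=\{1\}$ contains only the one normal subgroup $\{1\}$, contradicting extrinsic condensation; hence $G$ is infinitely presented. To get condensation, fix a marking as above and verify that each basic neighbourhood $\OO_{\FF,\FF'}$ of $R$ (so $\FF\subseteq R$ and $\FF'\cap R=\emptyset$) is uncountable. Put $H=F_m/\langle\langle\FF\rangle\rangle$, a finitely presented group mapping onto $G$ with kernel $N=R/\langle\langle\FF\rangle\rangle$. Extrinsic condensation yields uncountably many normal subgroups of $H$ inside $N$, that is, uncountably many $S\triangleleft F_m$ with $\langle\langle\FF\rangle\rangle\subseteq S\subseteq R$; each such $S$ contains $\FF$ and, lying inside $R$, avoids $\FF'$, so all of them belong to $\OO_{\FF,\FF'}$. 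Thus every neighbourhood of $R$ is uncountable and $R\in\Cond(\GG_m)$.

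For part (\ref{ic}) the homeomorphism $\iota$ does everything. If $G$ is of intrinsic condensation then $\{1\}\in\Cond(\NN(G))$, so every neighbourhood of $\{1\}$ is uncountable; transporting along $\iota$, every neighbourhood of $R$ in $Y$ is uncountable, whence any neighbourhood $V$ of $R$ in $\GG_m$ is uncountable because $V\cap Y$ already is. Hence $G$ is condensation. For the converse, assume in addition that $G$ is finitely presented, so that $Y$ is \emph{open} in $\GG_m$; then a subset of $Y$ is a neighbourhood of $R$ in $Y$ iff it is one in $\GG_m$, so $R\in\Cond(\GG_m)$ is equivalent to $R\in\Cond(Y)$, and pulling back through $\iota$ gives $\{1\}\in\Cond(\NN(G))$, i.e.\ intrinsic condensation.

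The only steps needing care---and which I regard as the main, though still routine, obstacle---are the two structural facts underlying the whole argument: that $M\mapsto\pi^{-1}(M)$ is a homeomorphism onto $Y$ with $Y$ open precisely for finitely presented $G$ (a direct check against the basis \eqref{eq:Basis-topology} of the marked-group topology), and that membership in $\Cond$ is detected by uncountability of neighbourhoods. Granting these, the rest is the bookkeeping of translating ``uncountably many normal subgroups'' between $F_m$, the intermediate finitely presented quotient $H$, and $G$.
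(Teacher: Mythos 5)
Your proof is correct and follows essentially the same route as the paper: for (\ref{ec}) you use the identical construction $H=F_m/N_{\FF}$ to show every basic neighbourhood $\OO_{\FF,\FF'}$ of the marking is uncountable, and for (\ref{ic}) you use the same homeomorphic embedding of $\NN(G)$ into $\GG_m$, whose image is open exactly when $G$ is finitely presented. The only difference is presentational: you spell out the characterization of condensation points via uncountability of all neighbourhoods, which the paper leaves implicit after recalling that the scattered part is countable.
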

\begin{proof}
(\ref{ec}) It is clear from the definition that extrinsic condensation implies infinitely presented. To check that it also implies condensation, assume $G \in \GG_m$ is of extrinsic condensation 
 and $N$ is the kernel of the canonical epimorphism $F_m \epi G$. 
 Consider an open neighbourhood $\OO_{\FF, \FF'}$ of $N$ in $\GG_m$, 
 with $\FF$ and $\FF'$ finite subsets of $F_m$ as in (\ref{eq:Basis-topology}).
Define $N_{\FF} \triangleleft F_m$ to be the normal subgroup generated by $\FF$.
Then $H = F_m/N_{\FF}$ is a finitely presentable group mapping onto $G$;
since $G$ is of extrinsic condensation, 
the kernel $N/N_{\FF}$ of $H \epi G$, and hence the neighbourhood $\OO_{\FF, \FF'}$,
contain uncountably many normal subgroups.
It follows that $G$ is a condensation group.

(\ref{ic}) Since given a marking of $G$ there is a homeomorphic embedding of $\mathcal{N}(G)$ into $\mathcal{G}_m$ mapping $\{1\}$ to $G$, the first implication is obvious. The converse follows from the fact that if $G\in\mathcal{G}_m$ is finitely presentable, 
then some neighbourhood of $G$ consists of quotients of $G$.
\end{proof}

Condensation and independency are related by the following facts:
\begin{lemma}\label{indcon}
Let $G$ be group and $\{M_i \mid i\in I\}$ an independent family of normal subgroups. Then
\begin{enumerate}[(1)]
\item\label{ind1} if $G$ is finitely generated and $I$ is infinite then $G/M_I$ is of extrinsic condensation. Thus INIP implies extrinsic condensation.
\item\label{ind2} if $I$ is infinite and the family $\{M_i \mid i\in I\}$ generates the (restricted) direct product of the $M_i$, 
then $G$ is of intrinsic condensation.
\end{enumerate}
\end{lemma}
\begin{proof}
We begin with (\ref{ind2}). 
Let $\mathcal{F}'$ be a finite subset of $G \smallsetminus \{1\}$.
There exists then a finite subset $J \subset I$ 
such that $\mathcal{F}' \cap M_I \subseteq \mathcal{F}' \cap M_J$.
Since the family $\{M_i \mid i\in I\}$ generates its direct sum, 
$M_{I \smallsetminus J}$ is disjoint from $\mathcal{F}'$  and contains $2^{\aleph_0}$ normal subgroups.
This shows that $G$ is an intrinsic condensation group.

Let us now prove (\ref{ind1}). Let $H$ be a finitely presentable group with an epimorphism $\pi:H\epi G$. Let $H_1$ be a finitely presentable group with an epimorphism $\rho:H_1\epi G$. Then the $\rho^{-1}(M_i)$ are also independent, so $G/M_I$ is INIP relative to $H_1$. By Lemma \ref{indpre}, it follows $G/M_I$ is also INIP relative to $H$, say by an infinite independent family of normal subgroups $(N_j)_{j\in J}$. The subgroups $N_K$, when $K$ ranges over subsets of $J$, are pairwise distinct and contained in the kernel of $\pi$, and there are uncountably many such subgroups. By definition, this shows that $G/M_I$ is of extrinsic condensation.
\end{proof}

\begin{remark}\label{rem:INIP-implies-extrinsic-condensation}
We therefore have
\begin{align*} \text{Infinitely minimally presentable}\;\Rightarrow\;\text{INIP}\;\Rightarrow\;\\ \text{Extrinsic Condensation}\;\Rightarrow \;\text{Infinitely Presented}.\end{align*}
The first implication is strict by Example \ref{e_oh}. Examples \ref{abelsnik} and \ref{abelsni}, based on Abels' group, show that the two other implications are strict.
\end{remark}

A useful corollary of Theorem \ref{ols} is the following.

\begin{corollary}\label{corols}
Every group $G$ with a non-abelian free normal subgroup has an infinite independent sequence of normal subgroups and is of intrinsic condensation.
\end{corollary}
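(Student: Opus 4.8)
The plan is to separate the statement into its two assertions and to feed the first into the machinery of Lemma \ref{indcon}. Write $F \trianglelefteq G$ for the given non-abelian free normal subgroup. Since $G$ normalises $F$, conjugation by any $g \in G$ restricts to an automorphism of $F$; hence every characteristic subgroup of $F$ is automatically normal in $G$. Theorem \ref{ols}, applied to the free group $F$, supplies an infinite independent sequence $(F_n)_{n \geq 1}$ of fully characteristic subgroups of $F$, and these are in particular characteristic, so each $F_n$ is normal in $G$.

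First I would check that $(F_n)$ is independent as a family of normal subgroups of $G$. For any index set $J$, every $F_j$ with $j \in J$ lies in $F$, so the subgroup of $G$ generated by $\bigcup_{j\in J} F_j$ is already normal in $G$ and coincides with the subgroup generated inside $F$. Thus the standard map $\Phi$ of Definition \ref{d:indpre} computed in $G$ agrees with the one computed in $F$, and the independence granted by Theorem \ref{ols} transfers verbatim. This establishes the first assertion of the corollary.

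For intrinsic condensation I would invoke Lemma \ref{indcon}(\ref{ind2}): it suffices to prove that $\Phi$ is continuous at $\emptyset$. By the criterion of Lemma \ref{stancon}, and since $\Phi(\emptyset)=\{1\}$, continuity at $\emptyset$ is equivalent to the single requirement that the descending chain of tails $D_m = \langle F_n : n \geq m\rangle$ satisfies $\bigcap_m D_m = \{1\}$: given a nontrivial $w$ one then finds an $m$ with $w \notin D_m$, and $D_m=\Phi(\{n : n\geq m\})$ is the value of $\Phi$ on a cofinite set. Granting this, every neighbourhood of $\{1\}$ in $\mathcal{N}(G)$ contains, for $m$ large enough, all the subgroups $F_J$ with $J \subseteq \{n : n\geq m\}$; these are pairwise distinct by independence and contained in $D_m$, hence uncountably many of them avoid any prescribed finite set of nontrivial elements. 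Thus $\{1\}$ is a condensation point of $\mathcal{N}(G)$, which is precisely intrinsic condensation.

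The main obstacle is therefore the passage from mere independence — all that Theorem \ref{ols} guarantees — to the triviality of the tail intersection $\bigcap_m D_m$; independence alone does not prevent a common nontrivial element from lying in every tail, and the remark following Theorem \ref{ols} shows that continuous independence of the fully characteristic family is not available off the shelf. I would attempt to remove this obstacle using the freeness of $F$: a non-abelian free group is residually nilpotent, so the characteristic chain $\gamma_m(F)$ has $\bigcap_m \gamma_m(F)=\{1\}$, and I would try to arrange the subgroups produced by Theorem \ref{ols} to descend into deeper and deeper terms of such a chain, forcing $D_m \subseteq \gamma_m(F)$ and hence $\bigcap_m D_m=\{1\}$. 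The delicate point in carrying this out is to perform the descent — for instance by replacing each $F_n$ with a suitable characteristic subgroup lying sufficiently deep — \emph{without} destroying the independence of the sequence; verifying that independence survives the modification is where the real technical work lies.
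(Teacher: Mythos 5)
Your treatment of the first assertion is correct and is essentially the paper's own: Theorem \ref{ols} applied to $F$ yields an infinite independent family of (fully) characteristic subgroups, these are normal in $G$ because $F$ is, and independence computed in $F$ or in $G$ is the same thing. The second assertion is where your argument has a genuine gap, and you have located it yourself: Lemma \ref{indcon}(\ref{ind2}) requires continuity of $\Phi$ at $\emptyset$, i.e.\ $\bigcap_m\langle F_n : n\ge m\rangle=\{1\}$, and Theorem \ref{ols} gives no control whatsoever on these tails. Your proposed repair --- pushing the $F_n$ into deeper terms of a characteristic central series while preserving independence --- is left entirely unverified, and there is no reason to expect it to be routine: the paper itself records, in the remark following Theorem \ref{ols}, that it is merely ``conceivable'' that the Adian--Olshanskii--Vaughan-Lee family can be upgraded to a continuously independent one. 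So, as written, intrinsic condensation is not proved.

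The paper sidesteps this difficulty by never asking for continuity of a single family. It uses Levi's theorem, $\bigcap_n F^{(n)}=\{1\}$, so that any basic neighbourhood of $\{1\}$ in $\mathcal{N}(G)$ contains every normal subgroup of $G$ lying inside $F^{(n)}$ for $n$ large enough; and since each derived subgroup $F^{(n)}$ is again free non-abelian and characteristic in $F$, Theorem \ref{ols} applied to $F^{(n)}$ itself supplies uncountably many such subgroups. For countable $F$ this already makes $\{1\}$ a condensation point; for uncountable $F$, where ``all neighbourhoods are uncountable'' no longer implies condensation, the paper adds a further reduction to a countably generated free factor $F'$ via the injective continuous map $N\mapsto N\cap F'$ on fully characteristic subgroups. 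Note in this connection that your closing step (``uncountably many of them avoid any prescribed finite set, thus $\{1\}$ is a condensation point'') silently uses second countability of $\mathcal{N}(G)$, i.e.\ countability of $G$, which the statement does not assume; this is a second, smaller gap that the paper's proof addresses explicitly.
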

\begin{proof}
Let $F$ be a non-abelian free normal subgroup in $G$. By Theorem \ref{ols}, 
there is in $F$ a infinite independent sequence $ n \mapsto H_n$ of characteristic subgroups; they are therefore normal in $G$ and independent. In particular, $F$ contains uncountably many normal subgroups of $G$. By Levi's theorem (see, e.\;g., \cite[Chap.~I, Prop.~3.3]{LS77})
the derived subgroups $F^{(n)}$ (which are also free and non-abelian) intersect in the unit element, and therefore in $\mathcal{N}(G)$, every neighbourhood of $\{1\}$ contains the set of subgroups of $F^{(n)}$ for some $n$ and is therefore uncountable. 

If the free group $F$ is countable, the preceding argument shows that $\{1\}$ lies in the condensation part of $G$. The argument fails if $F$ is uncountable, since for a closed subset of $2^F$, to show that an element is condensation it is not enough to show that all its neighbourhoods are uncountable.
Let $F$ be uncountable and let $F'$ be a free factor of infinite countable rank, and $\mathcal{FCH}(F)$ denotes the set of fully characteristic subgroups of $F$, then the natural continuous map $c:\mathcal{FCH}(F)\to\mathcal{FCH}(F')$, defined by $c(N)=N\cap F'$, is injective.
Indeed, let $m$ be an element of $F$. Then there exists an automorphism of $F$ (induced by a permutation of generators) mapping $m$ to an element $m'\in F'$. This shows that for every characteristic subgroup $N$ of $F$, we have $N=\bigcup_\alpha\alpha(N\cap F')$, where $\alpha$ ranges over automorphisms of $F$, and thus $N$ is entirely determined by $N\cap F'=c(N)$. Thus by compactness, $c$ is a homeomorphism onto its image. Since $\{1\}$ is a condensation point in $\mathcal{FCH}(F')$, we deduce that $\{1\}$ is a condensation point in $\mathcal{FCH}(F)$.
\end{proof}

\subsection{Examples of intrinsic condensation groups}

\subsubsection{Free groups and generalizations}\label{sssfrg}
The free group $F_2$ is an example of an intrinsic condensation group; 
this is a well-known fact and follows, for instance, from Corollary \ref{corols}.
Here is a proof that does not rely on Theorem \ref{ols}.

Let $F_2$ be a free group of rank 2
and $\{w_i \mid i \in I\}$  an infinite subset of words in $F_2$.
Suppose the family of normal subgroups $i \mapsto \gp_{F_2}(w_i)$ is independent.
Fix an index $i_0$, set $J = I \smallsetminus \{i_0\}$ 
and define the new subset $\{w'_j = w_{i_0}^{-1}\cdot w_j \mid j \in J \}$.
It produces a family of normal subgroups $\gp_{F_2}(w'_j)$ that is independent, too.

Consider now a sequence of words $n \mapsto w_n$  be in $F_2$
for which the associated sequence of normal subgroups $ n \mapsto \gp_{F_2}(w_n)$ is independent,
for instance the sequence of commutator words
\[
w_n = [a^{-n} ba^{n}, b] \quad \text{with}  \quad n \in \N^* = \N \smallsetminus \{0\}
\]
discussed in  Example \ref{example:minimal}.
Given a finite finite subset $\FF' \subset F_2 \smallsetminus \{1\}$
one can find a normal subgroup $N \triangleleft F_2$ of finite index that avoids $\FF'$
(use that $F_2$ is residually finite; cf.\,\cite[6.1.9]{Rob96}).
There exists then a coset $f\cdot N$ of $N$ 
so that the subset $I = \{n  \mid w_n \in f\cdot N \}$ is infinite.
Fix $i_0 \in I$ and set $w_j' = w_{i_0}^{-1} w_j$ for $j \in J = I \smallsetminus \{i_0\}$.
By the previous paragraph 
the words $w_j'$ lead to an independent family of normal subgroups  $\{S_j \mid j \in J\}$;
as each $S_j$ is contained in $N$ the $S_j$ generate a normal subgroup that is disjoint from $\FF'$.
It follows that the neighbourhood 
\[
\OO^{F_2}_{\FF'} = \{ S\triangleleft F_2 \mid  S \cap \FF' =  \emptyset \}
\]
of $\{1\}$ in $\NN(F_2)$  contains $2^{\aleph_0}$ normal subgroups.
\smallskip

By Corollary \ref{corols}
every group with a non-abelian free normal subgroup is of intrinsic condensation.
Here are some classes of finitely generated groups to which the corollary applies:
\begin{itemize}
\item \emph{Non-abelian limits groups.} These are finitely generated groups that are limits of non-abelian free groups in the space of marked groups. The existence of a non-abelian free subgroup follows from \cite[Thm.~4.6]{ChGu05}.
\item \emph{Non-elementary hyperbolic groups.} 
The existence of a non-abelian normal free subgroup was established by Delzant \cite[Thm.~I]{Del96}; 
the intrinsic condensation can also be derived from \cite[Thm.~3]{Ols93}.
\item \emph{Baumslag-Solitar groups}
\[
\textnormal{BS}(m,n)=\langle t,x \mid tx^mt^{-1}=x^n\rangle\qquad (|m|,|n|\ge 2);
\]
indeed, the kernel of its natural homomorphism onto $\Z[1/mn]\rtimes_{m/n}\Z$ is free and non-abelian: 
it is free because its action on the Bass-Serre tree of $\textnormal{BS}(m,n)$ is free. It is non-abelian, because otherwise $\textnormal{BS}(m,n)$ would be solvable, but it is a non-ascending HNN-extension so contains non-abelian free subgroups. 
\item Many other HNN-extensions obtained by truncating some group presentations, 
see Theorem \ref{thm:Structure-theorem}.
\end{itemize}

\begin{problem}
Does there exist a non-ascending HNN-extension having no nontrivial normal free subgroup?
\end{problem}
A natural candidate to be a counterexample would be a suitable HNN-extension of a non-abelian free group over a suitable isomorphism between two maximal subgroups. Recall that in \cite{Cam53}, simple groups are obtained as amalgams of non-abelian free groups over maximal subgroups of infinite index.

\subsubsection{Intrinsic condensation and abelian groups}
By a result of D. L. Boyer, 
a countably infinite abelian group $A$ has either $\aleph_0$ or $2^{\aleph_0}$ subgroups;
the second case occurs if and only if $A$ is either not minimax 
or if it admits $\Z(p^\infty)^2$ as a quotient for some prime $p$ (\cite[Thm.\,1]{Boy56}). 
The stated characterization of abelian groups with $2^{\aleph_0}$ subgroups
characterizes also the abelian groups $A$
that are condensation points in $\mathcal{N}(A)$:
Boyer \cite{Boy56} from which the first equivalence above can be checked directly.

\begin{proposition}[\protect{\cite[Prop.~A and Thm.~G]{CGP10}}] 
\label{p_condsa}
Let $A$ be an abelian group. Then 
\begin{enumerate}[(1)]
\item\label{aec} $A$ is a condensation point in $\mathcal{N}(A)$ if and only if $A$ is either not minimax 
or admits $\Z(p^\infty)^2$ as a quotient for some prime $p$; 
\item\label{aic} $A$ is of intrinsic condensation (i.e., $\{0\}$ is a condensation point in $\mathcal{N}(A)$) 
if and only if either $A$ is minimax or there exists a prime $p$ 
such that $\Z(p^\infty)^2$ is a quotient of $A$ and  $\Z(p^\infty)$ a quotient of $A/T(A)$. 
Here $T(A)$ denotes the torsion group of $A$.
\end{enumerate}
\end{proposition}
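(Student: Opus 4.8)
The plan is to treat the two parts in parallel: part (1) asks whether the top element $A$ lies in the condensation part of the compact space $\mathcal{N}(A)$, and part (2) whether the bottom element $\{0\}$ does. I would first record two reduction tools. (a) A surjection $\pi\colon A\epi\bar A$ induces a homeomorphism of $\mathcal{N}(\bar A)$ onto the closed subspace $\{N:N\supseteq\ker\pi\}$ of $\mathcal{N}(A)$, carrying the top of $\mathcal{N}(\bar A)$ to $A$; since a homeomorphism onto a closed subspace carries the condensation part into a perfect subset, top-condensation passes from $\bar A$ to $A$. (b) For a direct sum $C=\bigoplus_{n\in\N}B_n$ of non-zero groups, the criterion of Lemma \ref{stancon} shows the coordinate family $(B_n)$ is continuously independent, so $\Phi\colon 2^\N\to\mathcal{N}(C)$, $S\mapsto\bigoplus_{n\in S}B_n$, is a homeomorphic embedding of the Cantor set whose perfect image contains both the top $C=\Phi(\N)$ and the bottom $\{0\}=\Phi(\emptyset)$; hence both are condensation points of $\mathcal{N}(C)$.

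For the non-trivial implication of part (1), suppose first that $A$ is not minimax. By Lemma \ref{abq} it has a quotient that is an infinite direct sum of non-zero groups; restricting to a countable set of summands and combining (a) with (b) shows that $A$ is a condensation point. Suppose instead that $A$ is minimax but admits $\Z(p^\infty)^2$ as a quotient. Then (a) reduces everything to the crux: the top of $\mathcal{N}(\Z(p^\infty)^2)$ is a condensation point. I would prove this by Pontryagin duality. The annihilator correspondence $M\mapsto M^\perp$ is an order-reversing homeomorphism $\mathcal{N}(\Z(p^\infty)^2)\iso\mathcal{N}(\Z_p^2)$ (closed subgroups of $\Z_p^2$) sending the top to $\{0\}$, so it suffices to see that $\{0\}$ is a condensation point of the compact metrizable space $\mathcal{N}(\Z_p^2)$ (metrizable since $\Z_p^2$ is second countable). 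For this, each basic neighbourhood $\{M:M\subseteq p^m\Z_p^2\}$ of $\{0\}$ contains the family of rank-one lattices $\{p^m\Z_p v:v\in\mathbb{P}^1(\Z_p)\}$, and $v\mapsto p^m\Z_p v$ is a continuous injection of the Cantor set $\mathbb{P}^1(\Z_p)$; thus every neighbourhood of $\{0\}$ is uncountable, and in a compact metrizable space this forces $\{0\}$ to be a condensation point.

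For the converse of part (1), assume $A$ is minimax with no $\Z(p^\infty)^2$ quotient. Pick a finitely generated $B\le A$ with $A/B$ artinian; then $\{M:M\supseteq B\}$ is a basic neighbourhood of the top $A$ and is homeomorphic to $\mathcal{N}(A/B)$. Now $A/B$ is an artinian torsion group $F_0\oplus\bigoplus_q\Z(q^\infty)^{m_q}$ involving finitely many primes, and the hypothesis forces every $m_q\le1$ (otherwise $A\epi A/B\epi\Z(p^\infty)^2$). Decomposing into primary components gives $\mathcal{N}(A/B)=\prod_q\mathcal{N}((A/B)_q)$, a finite product of countable sets, since a $p$-group of Prüfer rank $\le1$ has only countably many subgroups. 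Hence $\mathcal{N}(A/B)$ is countable; a point with a countable neighbourhood is never a condensation point, so $A$ is not one.

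Part (2) follows the same scheme, but detects the bottom $\{0\}$ and is therefore governed by subgroups rather than quotients. If $A$ contains an infinite direct sum $\bigoplus_n C_n$ of non-zero subgroups, then (b) (applied to the continuously independent family $(C_n)$ inside $\mathcal{N}(A)$, cf. Lemma \ref{indcon}(\ref{ind2})) makes $\{0\}$ a condensation point. Otherwise $A$ has finite rank, and if it is still non-minimax one exhibits a continuum of subgroups accumulating at $\{0\}$ directly from the height/type classification of finite-rank groups (the model cases being $\Q$ and the localizations $\Z[P^{-1}]$ at an infinite set of primes $P$). In the minimax case the torsion subgroup $T_A$ contributes only a scattered neighbourhood of $\{0\}$ (finite subgroups and single $\Z(q^\infty)$-chains), so condensation at $\{0\}$ is controlled by $A/T_A$: when $A/T_A\epi\Z(p^\infty)^2$ one reduces again, via duality, to the crux of part (1), and otherwise the artinian-counting argument applies. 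I expect the main obstacles to be the crux computation isolating $\Z(p^\infty)^2$ (equivalently $\Z_p^2$) as the sole source of condensation, and, in part (2), the bookkeeping that confines the torsion subgroup to the scattered part so that only $A/T_A$ survives.
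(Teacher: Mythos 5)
The paper does not actually prove this proposition: it is imported as a special case of \cite[Thm.~G]{CGP10}, with the remark that part (1) also follows from Boyer's characterization of abelian groups with countably many subgroups. So there is no internal proof to compare against, and your argument must stand on its own. Your part (1) does: the two reduction tools are sound, the duality computation showing that the top of $\mathcal{N}(\Z(p^\infty)^2)$ is a condensation point is correct (one could equally note that the subgroups containing the $p^m$-torsion form a copy of $\mathcal{N}(\Z(p^\infty)^2)$, which contains the uncountably many divisible hulls of lines of $\Q_p^2$), and the converse via the countable neighbourhood $\{M\supseteq B\}\cong\mathcal{N}(A/B)$ with $A/B$ artinian of Pr\"ufer $q$-ranks $\le 1$ is fine.

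Part (2), however, is a sketch with genuine gaps. First, in the finite-rank non-minimax case you appeal to ``the height/type classification of finite-rank groups''; no such classification exists in rank $\ge 2$ (types classify only rank-one torsion-free groups), and $A$ is in general mixed. What is needed is, for every finite $K\subset A\smallsetminus\{0\}$, uncountably many subgroups disjoint from $K$; a workable route is to fix a maximal free subgroup $L$, choose for infinitely many primes $p$ (avoiding the finitely many primes of $T_A$) elements $c_p$ with $pc_p\in L$ but $c_p\notin L+T_A$, and then scale by a suitable integer $N$ so that the groups generated by $\{Nc_p:p\in S\}$ miss $K$ --- none of which is in your text. Second, the reduction of the minimax case to $A/T_A$ is asserted rather than proved, and it is not symmetric with part (1). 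In the positive direction, a surjection $A/T_A\epi\Z(p^\infty)^2$ does not by itself yield subgroups accumulating at $\{0\}$: your duality argument produces subgroups containing a fixed finite subgroup, i.e.\ accumulating at the top, and pushing them down requires something like the scaled subgroups $N\cdot\bigl((\mathcal{D}_v+\Z_p^2)\cap\Z[1/p]^2\bigr)$ with $N$ prime to $p$ chosen so that $N\Z[1/p]^2$ misses $K$, plus a lifting step through $T_A$ and through the kernel of $A/T_A\epi\Z(p^\infty)^2$. In the negative direction, ``the artinian-counting argument applies'' is the wrong tool: that argument counted a neighbourhood $\{M\supseteq B\}$ of the top, whereas here one must exhibit a finite $K$ such that only countably many subgroups of $A$ avoid $K$. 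Taking $K$ to be the nonzero socle of the artinian group $T_A$ forces $M\cap T_A=0$ and is the correct first step (and shows why $T_A$ drops out even when it contains $\Z(q^\infty)^2$ --- note that your parenthetical ``finite subgroups and single $\Z(q^\infty)$-chains'' misdescribes $T_A$, e.g.\ $A=\Z(p^\infty)^2$ has $\{0\}$ isolated in $\mathcal{N}(A)$ although $\mathcal{N}(A)$ is uncountable), but the ensuing count of torsion-free subgroups of $A$, which inject into but are not determined by their images in $A/T_A$, still has to be carried out.
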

For the definition of minimax groups, see section \ref{Characterization-abelian-groups-with-INI-family}.
If one applies part (2) of the previous proposition to the centre $Z$ of a finitely generated group $G$
one gets
\begin{corollary}
Let $G$ be a finitely generated group with centre $Z$.
Suppose there exists a prime $p$ 
such that $\Z(p^\infty)^2$ is a quotient of $Z$ and  $\Z(p^\infty)$ a quotient of $Z/T(Z)$. 
Then $G$ is an intrinsic condensation group.\qed
\end{corollary}

\begin{example}\label{powerabels}
For all $n\ge 3$, let  $A_n$ denote Abels' group.
Then the direct power $A_n^k$ is of intrinsic condensation for every $k > 1$. 
The group $A_n$, however, is not of intrinsic condensation 
(see Proposition \ref{CorCountQuot}). Thus the class of groups that are not of intrinsic condensation is not closed under direct products, not even under direct powers.
\end{example}

\subsection{Examples of extrinsic condensation groups}
\subsubsection{Largely related groups}\label{lrg}
Every infinitely presented metabelian groups is an extrinsic condensation group.
This claim will be established in Section \ref{s_bns}, 
using the notion of largely related groups, which we now introduce.

\begin{definition}\label{dlr}
We say that a finitely generated group $H$ is {\em largely related} if for every epimorphism $G\epi H$ of a finitely presentable group $G$ onto $H$, the kernel $N$ admits a non-abelian free quotient.
\end{definition}

Clearly, a largely related group cannot be finitely presentable; the converse is not true because the quotient of any finitely presentable group by an infinitely generated central subgroup is infinitely presented but not largely related.

\begin{proposition}\label{lr}
If a finitely generated group $H$ is largely related, then it is of extrinsic condensation and is not $(\text{FP}_2)$ over any nonzero commutative ring.
\end{proposition}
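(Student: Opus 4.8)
The plan is to prove the two conclusions separately, in each case feeding the defining property of largely related groups (Definition~\ref{dlr}) into a tool that is already available: Corollary~\ref{ontofree} for the extrinsic condensation, and the relation-module description of property $\mathrm{FP}_2$ for the homological statement. Recall that largely related means that for \emph{every} epimorphism $G \epi H$ with $G$ finitely presented, the kernel admits a non-abelian free quotient; both halves exploit exactly this.

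For extrinsic condensation I would fix an arbitrary finitely presented group $G$ together with an epimorphism $G \epi H$ of kernel $N$. Since $H$ is largely related, $N$ admits an epimorphism onto a non-abelian free group, so Corollary~\ref{ontofree} supplies at least continuously many fully characteristic subgroups of $N$. The point to check is then that each of these is normal in $G$: a fully characteristic subgroup of $N$ is invariant under every endomorphism of $N$, in particular under each automorphism of $N$ obtained by restricting an inner automorphism of $G$ (such restrictions make sense because $N \triangleleft G$), hence it is normal in $G$ and contained in $N$. This exhibits uncountably many subgroups of $G$ that are normal in $G$ and lie in $N$, which is precisely the defining condition for extrinsic condensation; Lemma~\ref{condei}(\ref{ec}) then also records that $H$ is a condensation group and is infinitely presented.

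For the $\mathrm{FP}_2$ assertion I would argue by contradiction. Let $\Bbbk$ be a nonzero commutative ring and suppose $H$ were of type $\mathrm{FP}_2$ over $\Bbbk$. Choosing a free group $F$ of finite rank and an epimorphism $F \epi H$ with kernel $N$, the standard characterization of $\mathrm{FP}_2$ in terms of relation modules asserts that $N_{\ab}\otimes_{\Z}\Bbbk$ is finitely generated as a $\Bbbk H$-module. I would pick finitely many $n_1,\dots,n_r \in N$ whose images generate this module and set $G=F/\langle\langle n_1,\dots,n_r\rangle\rangle$, a finitely presented group with an induced epimorphism $G \epi H$ of kernel $M = N/\langle\langle n_1,\dots,n_r\rangle\rangle$. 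The surjection $N \epi M$ induces a surjection of $\Bbbk H$-modules $N_{\ab}\otimes\Bbbk \epi M_{\ab}\otimes\Bbbk$ under which every generator $n_i$ maps to $0$; as these images generate the target, $M_{\ab}\otimes\Bbbk = 0$. But $H$ is largely related, so $M$ has a non-abelian free quotient $L$, and right-exactness of abelianization and of $-\otimes\Bbbk$ yields a surjection $M_{\ab}\otimes\Bbbk \epi L_{\ab}\otimes\Bbbk \cong \Bbbk^{\operatorname{rank}(L)}$, a nonzero $\Bbbk$-module because $\Bbbk \neq 0$ and $\operatorname{rank}(L)\ge 2$. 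This contradicts $M_{\ab}\otimes\Bbbk = 0$, so $H$ is not $\mathrm{FP}_2$ over $\Bbbk$.

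I expect the genuine subtlety to lie entirely in the $\mathrm{FP}_2$ half, and specifically in the reduction step: one must invoke the correct relation-module criterion and check that, after adjoining finitely many relators generating $N_{\ab}\otimes\Bbbk$, the kernel $M$ of the resulting finitely presented cover indeed satisfies $M_{\ab}\otimes\Bbbk=0$. It is worth emphasising the contrast between the two halves. Extrinsic condensation requires \emph{many} normal subgroups, and hence rests on the deep Adian--Olshanskii--Vaughan-Lee theorem through Corollary~\ref{ontofree}; by comparison the homological half is cheap, since it uses only that a \emph{single} non-abelian free quotient forces $H_1(M;\Bbbk)=M_{\ab}\otimes\Bbbk$ to be nonzero, irrespective of how small $\operatorname{rank}(L)$ is. Once the vanishing $M_{\ab}\otimes\Bbbk=0$ is secured, the clash $0 = M_{\ab}\otimes\Bbbk \neq 0$ closes the argument.
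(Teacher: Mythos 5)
Your proof is correct and follows essentially the same route as the paper: the extrinsic-condensation half is the paper's argument (a non-abelian free quotient of the kernel plus Corollary \ref{ontofree}, with the normality of fully characteristic subgroups of $N$ in $G$ made explicit, which the paper leaves implicit). Your $\mathrm{FP}_2$ half reaches the same conclusion, merely deriving from the finite-generation criterion for the relation module the fact the paper invokes directly, namely that $H$ fails $\mathrm{FP}_2$ over $R$ as soon as every finitely presented cover $G\epi H$ has kernel $N$ with $N_{\mathrm{ab}}\otimes_{\Z}R\neq 0$.
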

\begin{proof}
Let us first prove the second assertion. We have to prove that for every nonzero commutative ring $R$ and every epimorphism $G\epi H$ with $G$ finitely presentable and with kernel $N$, we have $N_\textnormal{ab}\otimes_\Z R\neq 0$. Indeed, since $N$ has a non-abelian free quotient, $N_\textnormal{ab}$ admits $\Z$ as a quotient and so $N_\textnormal{ab}\otimes_\Z R\neq 0$.

To check that $H$ is of extrinsic condensation, let $p:G\epi H$ be an epimorphism with $G$ finitely presentable and we have to show that the kernel $N$ of $p$ contains uncountably many subgroups that are normal in $G$. Since by assumption $N$ has a non-abelian free quotient, this follows from Corollary \ref{ontofree}.
\end{proof}

\subsubsection{Other examples of extrinsic condensation groups}
Thanks to Lemma \ref{indcon}(\ref{ind1}), infinitely independently presentable (INIP) groups are of extrinsic condensation. This fact is a first rich source of examples. 
Below we describe a further method of obtaining extrinsic condensation groups.  

We begin with an analogue of Proposition \ref{p_schur}.
\begin{proposition}\label{p_schure}
Let $G$ be a finitely generated group. 
Suppose that $H_2(G,\Z)$ is either not minimax 
or that it admits $\Z(p^\infty)^2$ as a quotient for some prime $p$. 
Then $G$ is of extrinsic condensation.
\end{proposition}
\begin{proof}
If $H_2(G,\Z)$ is not minimax the group $G$ is infinitely independently presented by Proposition \ref{p_schur} 
and so of extrinsic condensation by Lemma \ref{indcon}(1).

Suppose now that $H_2(G, \Z)$ maps onto $\Z(p^\infty)^2$.
Let $F$ be a finitely generated free group with an epimorphism $F\epi G$ with kernel $R$ and set $A=R/[R,F]$.
As in the proof of Proposition \ref{p_schur} 
one then sees that $A$ is isomorphic to a direct sum $H_2(G,\Z) \oplus B$ 
with $B$ a free abelian group of finite rank and so $\Z(p^\infty)^2$ is an image of $R/[R,F]$,
say $\rho \colon R/[R,F] \epi \Z(p^\infty)^2$. 
Suppose next that $R_1$ is a normal subgroup generated by a finite subset of $R$.
Then $\rho(R_1 \cdot [R,F]/[R,F])$ is a finite subgroup $K$ of $\Z(p^\infty)^2$ and so $\Z(p^\infty)^2/K$ is isomorphic to $\Z(p^\infty)^2$. 
The group $R/R_1$ maps therefore onto a group with $2^{\aleph_0}$ normal subgroups.
This proves that $G$ is an extrinsic condensation group.
\end{proof}

\begin{proposition}\label{gfpz}
Let $G$ be a finitely generated group and $Z \triangleleft G$ a central subgroup. 
\begin{enumerate}[(1)]
\item If $G$ is finitely presentable and $Z$ is minimax then $G/Z$ is not infinitely independently presentable. 
\item If $Z$ maps onto $\Z(p^\infty)^2$ for some prime $p$ then $G/Z$ is of extrinsic condensation.
\end{enumerate}
\end{proposition}
\begin{proof}
(1)  Let $F$ be a free group of finite rank, 
let $\pi \colon F \epi G$ be an epimorphism with kernel $R$
and $S \triangleleft F$ the preimage of $Z$ under $\pi$.
Assume $S$ is generated by an infinite family  $\{S_i \mid i \in I\}$ of normal subgroups.
Since $G$ is finitely presentable there exists a finite subset $J \subset I$ so that $R$ is contained in $S_J$.
If the family $\{S_i \mid i \in I\}$ were independent,
then $\{S_i \cdot R)/R \mid i \in I \smallsetminus J \}$ would be an infinite independent family of subgroups of the abelian group $Z$ and so $Z$ could not be minimax 
(by implication $(iv) \Rightarrow (i)$
of Lemma \ref{abq}). 

(2) The extension of groups $Z \triangleleft G \epi G/Z$ induces in homology an exact sequence
\[
H_2(G, \Z) \longrightarrow H_2(G/Z, \Z) \stackrel{\partial}{\longrightarrow} 
Z \stackrel{\res}{\longrightarrow} G_{\ab} \longrightarrow (G/Z)_{\ab} \to 0
\]
(see for instance \cite[Cor.~8.2]{HiSt97}).
It shows that $H_2(G/Z,\Z)$ maps onto the kernel $K$ of $\res \colon Z \to G_{\ab}$.
The centre $Z$ is therefore an extension of $K$ by a finitely generated group.
By hypothesis, there exists an epimorphism $\rho \colon Z \epi \Z(p^\infty)^2$.
Since $\Z(p^\infty)^2$ is a torsion group, the image of $\rho \circ \partial \colon K \to \Z(p^\infty)^2$ 
has finite index and so it is isomorphic to $\Z(p^\infty)^2$.
Therefore $H_2(G/Z,\Z)$ maps onto $\Z(p^\infty)^2$, 
and so $G/Z$ is of extrinsic condensation by Proposition \ref{p_schure}.
\end{proof}

\begin{example}\label{abelsnik}
If $n\ge 4$ and $k\ge 2$, and $A_n$ denotes Abels' group with centre $Z\simeq\Z[1/p]$, then the group $G=(A_n/Z)^k$ is of extrinsic condensation but not INIP by Proposition \ref{gfpz}.
\end{example}

\subsection{Counterexamples to different kinds of condensation}

For finitely presentable groups, condensation is equivalent to intrinsic condensation.
This fact yields many examples of non-condensation groups.
\begin{itemize}
\item Isolated groups. These include finite groups, finitely presentable simple groups, but also many groups having uncountably many normal subgroups. We refer the reader to \cite{CGP07} for a large family of examples.
\item Finitely presented groups satisfying max-n; 
these include, in particular, finitely presentable metabelian groups. 
\end{itemize}

It is more delicate to provide non-condensation infinitely presented groups. The first example is the following.

\begin{example}\label{abelsni}
If $n\ge 4$, the infinitely presented group $G=A_n/Z$ is not of condensation, 
a fact already observed in Corollary \ref{crl:Countable-CB-rank}
and in contrast with Example \ref{abelsnik}.
\end{example}

\begin{remark}\label{sapirem}
Example \ref{abelsni}, in conjunction with Corollary \ref{nominp}, 
provides examples of infinitely presented groups that are not INIP, and thus have no minimal presentation 

In the context of group varieties, Kleiman \cite{Kle81} constructed varieties of groups with no independent defining set of identities. It is however not clear whether the free groups (on $n\ge 2$ generators) in these varieties fail to admit a minimal presentation.
\end{remark}

\begin{example}\label{cnine}
There exist condensation groups that are
\begin{itemize}
\item \emph{extrinsic but not intrinsic}. The group $\Z\wr\Z$ is such an example. 
It is not of intrinsic condensation because it satisfies max-n 
and thus has countably many normal subgroups; 
on the other hand it is INIP by Example \ref{example:minimal} 
and therefore of extrinsic condensation by Lemma \ref{indcon}(\ref{ind1}).
\item \emph{intrinsic but not extrinsic}.
The free group $F_d$ is an example for $d\ge 2$. We already mentioned in \S\ref{sssfrg} that it is of intrinsic condensation. It is finitely presentable and thus is clearly not of extrinsic condensation. 
\item \emph{neither extrinsic nor intrinsic}. The simplest example we are aware of is much more elaborate 
and is based, once more, on Abels' group $A_4$ and its centre $Z\simeq\Z[1/p]$. 
We claim that $\Gamma=A_4\times (A_4/Z)$ is an example.

It is condensation because $\{0\}\times\Z[1/p]$ is a condensation point in $\mathcal{N}(\Z[1/p]^2)$ (this was shown in \cite{CGP10} but readily follows from the arguments in \cite{Boy56}).

It is not of extrinsic condensation, since $A_4\times A_4$ is finitely presentable 
and the kernel $\{1\} \times Z\simeq\Z[1/p]$ of the projection $A_4\times A_4 \epi \Gamma$ has only countably many subgroups.

Finally, $\Gamma$ is not of intrinsic condensation 
because it has only countably many normal subgroups, as a consequence of Lemma \ref{lemcos}. 
\end{itemize}
\end{example}

 
\section{Geometric invariant and metabelian condensation groups}\label{s_bns}

\subsection{Splittings}
\subsubsection{The splitting theorem}

Let $G$ be a group and $\pi \colon G \epi \Z$ an epimorphism.
We say that $\pi$ \emph{splits over a subgroup $U \subset G$} 
if there exists an element $t \in G$ and a subgroup $B \subseteq \ker \pi$ 
which contains both $U$ and $V = tUt^{-1}$, 
all in such a way that $G$ is canonically isomorphic to the HNN-extension $\HNN(B, \tau \colon U \iso V)$ 
with vertex group $B$, edge group $U$ and end point map $\tau$ given by conjugation by $t$.
Notice that $t$ plays the r\^ole of the stable letter. If $U\neq B\neq V$, we say that the splitting is {\em non-ascending}.

We are interested in the condition that $\pi$ splits over a \emph{finitely generated} subgroup.

An old result which is relevant in this context is Theorem A of \cite{BiSt78}; it asserts:
\emph{If $G$ is finitely presentable 
then every epimorphism $\pi \colon G \epi \Z$ splits over a finitely generated subgroup}.
We refine this old result to the following structure theorem:
\begin{theorem}
\label{thm:Structure-theorem}
Let $G$ be a finitely generated group and $\pi \colon G \epi \Z$ an epimorphism.
Then
\begin{enumerate}[(a)]
\item\label{st1} either $\pi$ splits over a finitely generated subgroup,
\item\label{st2} or $G$ is an inductive limit of a sequence of epimorphisms of finitely generated groups 
$G_n \epi G_{n+1}$ ($n\ge 0$) with the following features:
\begin{itemize}
\item the kernels of the limiting maps $\lambda_n \colon G_n \epi G$ are free of infinite rank, and
\item the epimorphisms $\pi \circ \lambda_n \colon G_n \epi \Z$ 
split over a compatible sequence of finitely generated subgroups, and have compatible stable letters. Moreover, the vertex groups are isomorphic to finitely generated subgroups of $\textnormal{Ker}(\pi)$.
\end{itemize}
\end{enumerate}
\end{theorem}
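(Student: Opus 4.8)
The plan is to construct the approximating tower of alternative~(\ref{st2}) by hand, taking as vertex groups the genuine ``window'' subgroups of $G$ itself; freeness of the kernels will then be automatic from Bass--Serre theory, and the dichotomy will be read off from a height function. Since $\Z$ is free, $\pi$ splits as a group extension, so I write $G=N\rtimes\langle t\rangle$ with $N=\ker\pi$ and $\pi(t)=1$. Choose a finite free presentation: a free group $F$ on a basis $\{t,x_2,\dots,x_k\}$ mapping onto $G$ with $t\mapsto t$ and the $x_j$ mapping to generators of $N$, arranged so that $F\epi\Z$ sends $t\mapsto 1$ and $x_j\mapsto 0$. Then $\ker(F\epi\Z)=\tilde N$ is free on the family $y_{j,m}=t^mx_jt^{-m}$, conjugation by $t$ realises the shift $y_{j,m}\mapsto y_{j,m+1}$, and the relation group $R=\ker(F\epi G)$ is a $t$-invariant normal subgroup of $F$ contained in $\tilde N$. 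Writing $\bar y_{j,m}\in N$ for the images, put
$$B_n=\langle\bar y_{j,m}\mid 2\le j\le k,\ 0\le m\le n\rangle\subseteq N\subseteq G,$$
with $U_n=\langle\bar y_{j,m}\mid 0\le m\le n-1\rangle$ and $V_n=tU_nt^{-1}=\langle\bar y_{j,m}\mid 1\le m\le n\rangle$. These are finitely generated subgroups of $B_n$, and conjugation by $t$ in $G$ restricts to an isomorphism $\tau_n\colon U_n\iso V_n$.

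Next I define $G_n=\HNN(B_n,\tau_n\colon U_n\iso V_n)$ with stable letter $t$; since $B_n$ is finitely generated, so is $G_n$. Because $\tau_n$ is conjugation by $t$ inside $G$, the inclusion $B_n\hookrightarrow G$ together with $t\mapsto t$ extends to a homomorphism $\lambda_n\colon G_n\to G$; likewise $B_n\hookrightarrow B_{n+1}$ and $t\mapsto t$ yield $\mu_n\colon G_n\to G_{n+1}$ with $\lambda_{n+1}\mu_n=\lambda_n$. All these maps are surjective: conjugating $B_n$ by powers of $t$ recovers every $\bar y_{j,m}$, and in $G_{n+1}$ one has $\bar y_{j,n+1}=t\bar y_{j,n}t^{-1}\in\langle B_n,t\rangle$. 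Moreover $\colim_n G_n=G$, since every defining relator of $G$ lies in some window $B_n$ and hence already holds in $G_n$, so any word killed by $\lambda_n$ is a product of conjugates of relators trivial in $\colim_n G_n$. By construction $\pi\circ\lambda_n$ splits over the finitely generated subgroup $U_n$, the edge groups form the compatible chain $U_n\subseteq U_{n+1}$, and the stable letter is $t$ throughout; this is exactly the splitting data demanded in~(\ref{st2}).

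It remains to analyse $K_n=\ker\lambda_n$. The key point is that $\lambda_n$ is injective on the vertex group $B_n$ (there it is the inclusion $B_n\hookrightarrow G$), so $K_n\cap B_n=\{1\}$; as $K_n\triangleleft G_n$, it meets every conjugate of $B_n$, hence every vertex and edge stabiliser of the Bass--Serre tree $T_n$ of $G_n$, trivially. Thus $K_n$ acts freely on $T_n$ and is a free group, namely $\pi_1$ of the quotient graph $\Theta_n=K_n\backslash T_n$, whose vertices and edges are $G/B_n$ and $G/U_n$. Since $B_n,U_n\subseteq N=\ker\pi$, the assignment $gB_n\mapsto\pi(g)$ is a well-defined $G$-equivariant height function $\Theta_n\to\Z$ under which every edge raises height by exactly $1$. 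Hence any circuit in $\Theta_n$ can be pushed off itself by a large power of $t$ to produce infinitely many pairwise disjoint circuits, so $\Theta_n$ is either a tree or has infinite first Betti number, i.e.\ $K_n$ is either trivial or free of infinite rank. If some $K_n$ is trivial, then $\lambda_n$ is an isomorphism and $G\cong\HNN(B_n,\tau_n)$ splits over the finitely generated group $U_n$, which is alternative~(\ref{st1}); otherwise every $K_n$ is free of infinite rank and $(G_n,\mu_n,\lambda_n)$ realises alternative~(\ref{st2}). When $G$ is finitely presented all relators fit in one window, forcing the first alternative, so this recovers Theorem~A of~\cite{BiSt78}.

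The routine parts are the surjectivity statements and the identification $\colim_n G_n=G$. I expect the main obstacle to be the structure of the kernels: one must verify carefully, via Britton's lemma, that $B_n$ genuinely embeds in $G_n$ and that $\lambda_n\vert_{B_n}$ is the honest inclusion (this is what forces the free action on $T_n$), and then establish the trivial-or-infinite-rank alternative through the height function $\Theta_n\to\Z$. Keeping the Bass--Serre bookkeeping consistent across the tower — so that the splittings, edge groups and stable letters are simultaneously compatible — is the delicate point of the argument.
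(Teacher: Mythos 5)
Your construction is the same as the paper's: the window subgroups $B_n$ generated by $\bigcup_{0\le j\le n}t^jSt^{-j}$, the tower of HNN-extensions $G_n=\HNN(B_n,\tau_n\colon U_n\iso V_n)$ with $U_n=B_{n-1}$, the identification $\colim_n G_n\cong G$ by pushing each relator into a single window, and freeness of $K_n=\ker\lambda_n$ from the free action on the Bass--Serre tree (the vertex group injects and $K_n$ is normal). Where you genuinely diverge is the endgame. The paper closes the dichotomy by a colimit argument: if some $W_n$ is finitely generated \emph{as a normal subgroup}, the finitely many normal generators die in some later $G_m$, so $\lambda_m$ is an isomorphism and case (\ref{st1}) holds; otherwise every kernel is infinitely generated as a normal subgroup, hence (being free) of infinite rank. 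You instead prove the sharper statement that each $K_n$ is either trivial or free of infinite rank, by passing to the quotient graph $\Theta_n=K_n\backslash T_n$ with vertex set $G/B_n$, observing that $\pi$ descends to a height function raised by $1$ along every edge, and translating any embedded cycle by large powers of $t$ to produce infinitely many disjoint, hence homologically independent, cycles. This argument is correct (no loops or height-preserving edges can occur since $B_n,U_n\subseteq\ker\pi$, and disjoint embedded cycles are independent in $H_1$ of a graph), and it buys a cleaner local dichotomy at each stage $n$; the paper's route is buying the slightly different information that case (\ref{st1}) already follows from normal finite generation of a single kernel. Both correctly establish the disjunction of the theorem. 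Two cosmetic points: ``finite free presentation'' should just read ``epimorphism from a finitely generated free group'' (nothing requires $G$ to be finitely presented, and the freeness of $\tilde N$ on the $y_{j,m}$ is never actually used), and the verification you defer to Britton's lemma is immediate, since the base group of an HNN-extension always embeds and $\lambda_n$ restricted to it is the inclusion $B_n\incl G$ by construction.
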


\begin{remark}
If a \emph{finitely generated} group $G$ splits over a finitely generated subgroup (either as an HNN-extension or as an amalgamated product) then the vertex groups are necessarily finitely generated, too.
\end{remark}

\subsubsection{Consequences}
\label{sssec:Corollaries-structure-theorem}

We first develop consequences and applications of Theorem \ref{thm:Structure-theorem}, which will be proved in \S\ref{ptst}.
\begin{corollary}
\label{crl:Corollary-for-ii}
Let $G$ be a finitely generated group 
with an epimorphism $\pi \colon G \epi \Z$ 
that does not split over a finitely generated subgroup.
Then $G$ is largely related (see Definition \ref{dlr} and \S\ref{lrg}). In particular,
\begin{enumerate}[(1)]
\item\label{ca} $G$ is not of type $(\text{FP}_2)$ over any non-zero commutative ring $K$; 
in particular, $G$ is not finitely presentable.
\item\label{cb} $G$ is an extrinsic condensation group.\qed
\end{enumerate}
\end{corollary}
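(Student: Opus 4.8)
The plan is to reduce everything to the single claim that $G$ is \emph{largely related} in the sense of Definition \ref{dlr}; once this is established, assertions (\ref{ca}) and (\ref{cb}) follow at once from Proposition \ref{lr}, which says that a largely related finitely generated group is of extrinsic condensation and fails to be of type $(\text{FP}_2)$ over every nonzero commutative ring (hence is not finitely presentable). Since by hypothesis $\pi$ does not split over a finitely generated subgroup, Theorem \ref{thm:Structure-theorem} places us in case (\ref{st2}): we may write $G=\colim G_n$ as the inductive limit of a sequence of epimorphisms of finitely presented groups, with limiting maps $\lambda_n\colon G_n\epi G$ whose kernels $K_n=\ker\lambda_n$ are free of infinite rank. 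I also record that $G$ itself is not finitely presented, for otherwise Theorem A of \cite{BiSt78} would force $\pi$ to split over a finitely generated subgroup, contrary to assumption.

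Now fix an arbitrary epimorphism $\phi\colon G'\epi G$ from a finitely presented group $G'$, with kernel $N$; I must produce a non-abelian free quotient of $N$. Because $G'$ is finitely presented while $G$ is a directed limit of the surjections $G_n\epi G_{n+1}$, the map $\phi$ lifts through some finite stage: there is an $n$ and a homomorphism $\psi\colon G'\to G_n$ with $\lambda_n\circ\psi=\phi$ (lift the finitely many generators using surjectivity of $\lambda_n$, then push up the tower until the finitely many relators of $G'$ become trivial). Since $N=\ker(\lambda_n\circ\psi)=\psi^{-1}(K_n)$, the restriction of $\psi$ carries $N$ onto $L_n:=\psi(G')\cap K_n$, a subgroup of the free group $K_n$ and therefore itself free. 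Thus $N\epi L_n$, and everything reduces to showing that $L_n$ is non-abelian, i.e.\ free of rank at least two.

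The crux is to exclude the possibility that $L_n$ is finitely generated, and this is where I expect the real work to sit. Suppose $L_n$ were finitely generated. In a limit of surjections one has $K_n=\bigcup_{m\ge n}\ker(G_n\to G_m)$ as an increasing union, so a finitely generated $L_n\subseteq K_n$ would already be annihilated at some finite stage $m$; composing $\psi$ with the transition map gives $\psi_m\colon G'\to G_m$ with $\lambda_m\circ\psi_m=\phi$ and $\psi_m(N)=1$. Then $N=\ker\psi_m$, so $\psi_m$ descends to an injection $\bar\psi_m\colon G\mono G_m$ with $\lambda_m\circ\bar\psi_m=\id_G$. This exhibits $G$ as a retract of the finitely presented group $G_m$; as a retract of a finitely presented group is finitely presented, $G$ would be finitely presented, contradicting the observation above. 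Hence $L_n$ is free of infinite rank, in particular non-abelian, and $N\epi L_n$ is the sought non-abelian free quotient. Therefore $G$ is largely related, and (\ref{ca})--(\ref{cb}) follow from Proposition \ref{lr}. The single point demanding care is the appeal to \emph{finite presentability} of the approximating groups $G_n$ produced by Theorem \ref{thm:Structure-theorem}: without it the clean retract contradiction is unavailable, and it is precisely the delicate structure furnished by that theorem that makes this step legitimate.
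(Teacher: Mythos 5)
Your overall strategy --- reduce to ``largely related'' and invoke Proposition \ref{lr}, then use case (\ref{st2}) of Theorem \ref{thm:Structure-theorem} --- is the intended one, and the first half of your argument (factoring $\phi\colon G'\epi G$ through some $G_n$ and observing that $N$ surjects onto the free group $L_n=\psi(G')\cap K_n$) is sound. But the step you yourself flag as the crux contains a genuine gap: Theorem \ref{thm:Structure-theorem} does \emph{not} provide finitely \emph{presented} approximants, only finitely \emph{generated} ones. Each $G_n$ is an HNN-extension $\HNN(B_n,t_n\mid\tau_n)$ whose base group $B_n$ is merely a finitely generated subgroup of $\ker\pi$, and in the situation at hand there is no reason for $B_n$, hence for $G_n$, to be finitely presented. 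Consequently the sentence ``a retract of a finitely presented group is finitely presented'' has nothing to apply to: a retract of a merely finitely generated group can fail to be finitely presented, your contradiction evaporates, and you cannot exclude that $L_n$ is trivial or infinite cyclic.

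The dead end arises because your lift $\psi\colon G'\to G_n$ is an arbitrary one, hence possibly non-surjective, which is why you only reach the subgroup $\psi(G')\cap K_n$ instead of $K_n$ itself. The repair is to build the tower of Theorem \ref{thm:Structure-theorem} from generating data pulled back from $G'$: choose $t'\in G'$ with $\pi(\phi(t'))=1$ and a finite set $S'\subset\ker(\pi\circ\phi)$ such that $\{t'\}\cup S'$ generates $G'$, and run the construction with $t=\phi(t')$ and $S=\phi(S')$. Sending $t'\mapsto t_m$ and $s'\mapsto\phi(s')\in B_0$ kills the finitely many relators of $G'$ once $m$ is large, so it defines $\psi\colon G'\to G_m$ with $\lambda_m\circ\psi=\phi$; and this $\psi$ is \emph{onto}, since $\psi(G')$ contains $t_m$ together with $t_m^{j}\phi(s')t_m^{-j}=t^{j}\phi(s')t^{-j}$ for $0\le j\le m$, i.e.\ the defining generators of $B_m$. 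Then $N=\psi^{-1}(K_m)$ surjects onto all of $K_m$, which in case (\ref{st2}) is free of infinite rank (if some $K_m$ were finitely generated, even as a normal subgroup, the proof of the theorem would place you back in case (\ref{st1})), so $N$ has a non-abelian free quotient and no retract argument is needed.
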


To apply Corollary \ref{crl:Corollary-for-ii}, it is useful to clarify 
whether an epimorphism $G\epi\Z$ splits over a finitely generated subgroup. 
Such a splitting can be ascending or not, and the next two propositions deal with each of theses cases.

\begin{definition}
\label{definition:Contracting-automorphism} 
An automorphism $\alpha \colon N \longrightarrow N$  \emph{contracts into a subgroup} $B\subset N$ 
if $\alpha(B)\subset B$ and $N=\bigcup_{n\ge 0}\alpha^{-n}(B)$.
\end{definition}

\begin{proposition}
Consider an epimorphism $\pi:G\epi\Z$ with kernel $N$. If $t\in G$, let $\alpha_t$ be the automorphism of $N$ defined by $\alpha_t(g)=tgt^{-1}$. The following statements are equivalent:
\begin{enumerate}[(i)]
\item\label{as_1} $\pi$ has an ascending splitting over a finitely generated subgroup;
\item\label{as_2} for some $t\in\pi^{-1}(\{1\})$, either $\alpha_t$ or $\alpha_t^{-1}$ contracts into a finitely generated subgroup of $N$;
\item\label{as_3} for every $t\in\pi^{-1}(\{1\})$, either $\alpha_t$ or $\alpha_t^{-1}$ contracts into a finitely generated subgroup of $N$.
\end{enumerate}
\end{proposition}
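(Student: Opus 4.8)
The plan is to establish the cycle of implications (i)$\Rightarrow$(iii)$\Rightarrow$(ii)$\Rightarrow$(i). The implication (iii)$\Rightarrow$(ii) is immediate, as $\pi^{-1}(\{1\})$ is nonempty. Throughout I exploit that, $\Z$ being free, the extension $N\mono G\epi\Z$ splits: fixing any $t_0\in\pi^{-1}(\{1\})$ identifies $G$ with the semidirect product $N\rtimes\langle t_0\rangle$, every element of $\pi^{-1}(\{1\})$ is then of the form $mt_0$ with $m\in N$, and $\alpha_{mt_0}(x)=m\,\alpha_{t_0}(x)\,m^{-1}$.

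For (ii)$\Rightarrow$(i), suppose $\alpha_t$ contracts into a finitely generated subgroup $B\subseteq N$, so that $\alpha_t(B)\subseteq B$ and $N=\bigcup_{n\ge 0}\alpha_t^{-n}(B)$ (the case of $\alpha_t^{-1}$ is symmetric and produces a splitting ascending in the opposite direction). I would form the ascending HNN-extension $\widehat G=\HNN(B,\tau\colon B\iso\alpha_t(B))$, with $\tau$ given by conjugation by the stable letter, and consider the homomorphism $\widehat G\to G$ that is the identity on $B$ and sends the stable letter to $t$. It is surjective because $\langle B,t\rangle$ contains $\bigcup_n t^{-n}Bt^{n}=\bigcup_n\alpha_t^{-n}(B)=N$. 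For injectivity, note that the kernel of the canonical map $\widehat G\epi\Z$ is the ascending union $\varinjlim\bigl(B\xrightarrow{\alpha_t}B\xrightarrow{\alpha_t}\cdots\bigr)$, while the maps $\alpha_t^{-n}\colon B\to N$ identify this same direct limit with $N=\bigcup_n\alpha_t^{-n}(B)$; since the two maps to $\Z$ agree, $G$ is canonically isomorphic to $\widehat G$, i.e.\ $\pi$ splits ascendingly over the finitely generated subgroup $B$.

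The substantive implication is (i)$\Rightarrow$(iii). From an ascending splitting of $\pi$ over a finitely generated subgroup I first extract $t_0\in\pi^{-1}(\{1\})$ (replacing the given stable letter by its inverse if necessary) and a finitely generated $B\subseteq N$ such that one of $\alpha_{t_0}$, $\alpha_{t_0}^{-1}$ contracts into $B$, according as the vertex group $B$ equals the edge group $U$ or $V$ in the splitting $\HNN(B,\tau\colon U\iso V)$. The heart of the matter is the following claim: \emph{if $\alpha_{g_0}$ contracts into a finitely generated $B$ and $g=mg_0$ with $m\in N$, then $\alpha_g$ contracts into a finitely generated subgroup.} To prove it, first enlarge $B$ so as to absorb $m$: since $m\in N=\bigcup_n\alpha_{g_0}^{-n}(B)$, we have $m\in B':=\alpha_{g_0}^{-k}(B)$ for some $k$, and $B'$ is finitely generated (being the image of $B$ under an automorphism of $N$), satisfies $\alpha_{g_0}(B')\subseteq B'$ and $\bigcup_n\alpha_{g_0}^{-n}(B')=N$; so, renaming $B'$ as $B$, we may assume $m\in B$. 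Then $\alpha_g(B)=m\,\alpha_{g_0}(B)\,m^{-1}\subseteq mBm^{-1}=B$, and I claim $\alpha_g^{-n}(B)=\alpha_{g_0}^{-n}(B)$ for every $n\ge 0$. This follows by induction on $n$ from the identity $\alpha_g^{-1}(y)=\alpha_{g_0}^{-1}(m^{-1}ym)$: since $m\in B\subseteq\alpha_{g_0}^{-n}(B)$, conjugation by $m$ permutes the subgroup $\alpha_{g_0}^{-n}(B)$, whence $\alpha_g^{-1}\bigl(\alpha_{g_0}^{-n}(B)\bigr)=\alpha_{g_0}^{-1}\bigl(\alpha_{g_0}^{-n}(B)\bigr)=\alpha_{g_0}^{-(n+1)}(B)$. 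Therefore $\bigcup_n\alpha_g^{-n}(B)=\bigcup_n\alpha_{g_0}^{-n}(B)=N$, proving the claim. Applying it with $g_0=t_0$, $g=t$ when $\alpha_{t_0}$ contracts, and with $g_0=t_0^{-1}$, $g=t^{-1}$ (writing $t^{-1}=m''t_0^{-1}$ with $m''\in N$) when $\alpha_{t_0}^{-1}$ contracts, shows that for every $t\in\pi^{-1}(\{1\})$ either $\alpha_t$ or $\alpha_t^{-1}$ contracts into a finitely generated subgroup, which is (iii).

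I expect the main obstacle to be exactly this passage (i)$\Rightarrow$(iii): proving that the contraction property does not depend on the choice of lift $t$ of $1$. The crux is the reduction to the case $m\in B$, after which the observation that conjugation by $m$ stabilises every $\alpha_{g_0}^{-n}(B)$ forces the two backward orbits $\alpha_g^{-n}(B)$ and $\alpha_{g_0}^{-n}(B)$ to coincide. A secondary point needing care is injectivity (not merely surjectivity) of the natural map onto $G$ in (ii)$\Rightarrow$(i), which is what upgrades a quotient presentation to a genuine splitting; this is settled by the direct-limit description of $N$.
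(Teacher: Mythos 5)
Your proof is correct and follows essentially the same route as the paper: the key step is the implication that the contraction property is independent of the lift of $1$, handled in both cases by absorbing the difference element $m$ into the finitely generated subgroup $\alpha_{t_0}^{-k}(B)$ for suitable $k$ and checking that $\alpha_t$ then contracts into it. You merely spell out in more detail the verification $\alpha_g^{-n}(B)=\alpha_{g_0}^{-n}(B)$ and the equivalence (i)$\Leftrightarrow$(ii), which the paper dismisses as obvious.
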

\begin{proof}
(\ref{as_1})$\Leftrightarrow$(\ref{as_2})$\Leftarrow$(\ref{as_3}) is obvious. Suppose (\ref{as_2}) and let us prove (\ref{as_3}). 
So there exists an element $t_0 \in G$ with $\pi(t_0) = 1$ and a sign $\varepsilon\in\{\pm 1\}$ such that $\alpha_{t_0}^\varepsilon$ contracts into a finitely generated subgroup of $N$. Let us check that for every $t\in G$ with $\pi(t)=1$, $\alpha_{t}^\varepsilon$ contracts into a finitely generated subgroup of $N$.

By assumption, there exists a finitely generated subgroup $M\subset N$ such that 
 \[
 M \subseteq \alpha^{-\varepsilon}_{t_0} (M) \text{ and } \bigcup\nolimits_{j\ge 0} \alpha^{-\varepsilon j} _{t_0} (M)= N.
 \]
 There exists an element  $w \in N$ with $t = t_0 \cdot w$,
 and hence there exists a non-negative integer $k$ with $w \in M' = \alpha^{-\varepsilon k}_{t_0}(M)$.
Then $M'$ is a finitely generated subgroup of $N$ and $\alpha^{\varepsilon}_{t}$ contracts into it.
\end{proof}

For the case of a non-ascending splitting, there is no characterization other than obvious paraphrases. The following proposition provides a (non-comprehensive) list of necessary conditions, which by contraposition provide obstructions to the existence of a non-ascending splitting.

\begin{proposition}\label{cnas}
Consider an epimorphism $\pi:G\epi\Z$ with kernel $N$, with a non-ascending splitting over a finitely generated subgroup. Then each of the following properties holds.  
\begin{enumerate}[(1)]
\item\label{nas12} $N$ is an iterated amalgam $$\cdots A_{-2}\ast_{B_{-2}}A_{-1}\ast_{B_{-1}}A_{0}\ast_{B_{0}}A_{1}\ast_{B_{1}}
 A_2\cdots$$
with all $B_i$ finitely generated and all embeddings proper (if $G$ is finitely generated then all $A_i$ can be chosen finitely generated);
\item\label{nas2} $N$ is an amalgam of two infinitely generated groups over a finitely generated subgroup;
\item\label{nas3} (assuming that $G$ is finitely generated) there exists $g\in N$ whose centralizer in $N$ is contained in a finitely generated subgroup of $N$;
\item\label{nas5} there exists an isometric action of $N$ on a tree with two elements acting as hyperbolic isometries with no common end, with finitely generated edge stabilizers;
\item\label{nas4} $G$ has a non-abelian free subgroup;
\item\label{nasp} $N$ is not the direct product of two infinitely generated subgroups.
\end{enumerate}
\end{proposition}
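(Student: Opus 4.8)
The plan is to reduce every item to a single structural description of the kernel. Write the given non-ascending splitting as $G=\HNN(B,\tau\colon U\iso V)$ with stable letter $t$, so that $\pi(t)=1$, $B\subseteq N$, and $U\neq B\neq V$. First I would look at the Bass--Serre tree $T$ of this HNN decomposition: $G$ acts on $T$ with quotient a single loop, vertex stabilizer $B$ and edge stabilizer a conjugate of $U\cong V$. Since $\pi\colon G\epi\Z$ has kernel $N$ and $\pi(t)=1$, the height function $h(gB)=\pi(g)$ is well defined and $N$-invariant, and $N\backslash T$ is the $\Z$-cover of the loop, i.e.\ the bi-infinite line graph. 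Reading off stabilizers gives vertex groups $A_n=t^nBt^{-n}$ (all contained in $N$) and finitely generated edge groups $B_n=t^nUt^{-n}$, with all inclusions proper precisely because $U\neq B\neq V$. By the fundamental theorem of Bass--Serre theory this is exactly statement \eqref{nas12}; when $G$ is finitely generated the Remark preceding the Proposition forces $B$, hence every $A_n$, to be finitely generated. Statement \eqref{nas2} is then immediate: cutting the line at the edge $B_0$ expresses $N=N_{\le 0}\ast_{B_0}N_{\ge 1}$, and each half is an infinite proper iterated amalgam, hence infinitely generated, since a finite generating set would lie in some $A_1\ast_{B_1}\cdots\ast_{B_{k-1}}A_k$ and thus omit an element of $A_{k+1}\smallsetminus B_k$.

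The remaining items I would read off the $N$-action on $T$. For \eqref{nas5} I would exhibit two explicit hyperbolic elements: choosing $a_i\in A_i$ outside the relevant edge groups, the products $g_1=a_0a_1$ and $g_2=a_1'a_2$ are cyclically reduced of length $2$ in the sub-amalgams $A_0\ast_{B_0}A_1$ and $A_1\ast_{B_1}A_2$, hence hyperbolic; because $N$ preserves $h$, the axis of $g_1$ oscillates between levels $\{0,1\}$ and that of $g_2$ between $\{1,2\}$, so the two axes meet in a bounded set and share no end. As the edge stabilizers are the finitely generated $B_n$, this gives \eqref{nas5}. Statement \eqref{nas4} then follows by standard ping-pong on the tree: sufficiently high powers of $g_1$ and $g_2$ generate a non-abelian free subgroup inside $N\le G$. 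For \eqref{nas3} I would take a hyperbolic $g$; its centralizer preserves $\mathrm{axis}(g)$, so $C_N(g)$ lies in $\mathrm{Stab}_N(\mathrm{axis}(g))$, which is an extension of a subgroup of $\mathrm{Isom}(\R)\cong D_\infty$ by the pointwise stabilizer of the axis, and the latter sits inside a single finitely generated edge group $B_n$. Hence $C_N(g)$ is contained in a finitely generated subgroup.

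The delicate point is \eqref{nasp}. Suppose $N=P\times Q$ with both factors infinite. Commuting hyperbolic isometries of a tree share an axis, so if both $P$ and $Q$ contained hyperbolic elements then $N$ would preserve a line, contradicting the general-type action established in \eqref{nas5}; thus one factor, say $Q$, is elliptic (every element fixes a point). By the classical dichotomy for elliptic groups acting on a tree, $Q$ fixes a vertex or a unique end. If $Q$ fixes a unique end, then $P$, commuting with $Q$, must fix that same end, so $N$ fixes an end, which is impossible for a general-type action. If $Q$ fixes a vertex, I would split according to whether $P$ carries a hyperbolic element: when it does, $Q$ preserves that axis while fixing a vertex, so an index-at-most-two subgroup $Q_0$ fixes the axis pointwise and lands in a finitely generated edge stabilizer, and applying the retraction $N\epi Q$ exhibits $Q$ as generated by a finitely generated group plus at most one element, contradicting its infinite generation; when $P$ is also elliptic, a nearest-point-projection argument shows $\mathrm{Fix}(P)\cap\mathrm{Fix}(Q)\neq\emptyset$, so $N$ fixes a point and is not of general type. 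The main obstacle is organizing this last case analysis cleanly, as it is the one place where the direct-product structure, the classification of tree actions, and the finite generation of edge groups must all be combined; by contrast, once the decomposition \eqref{nas12} is in place the other statements are fairly mechanical.
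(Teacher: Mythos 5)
Your proof is correct and its skeleton coincides with the paper's: unwind the non-ascending HNN splitting into the bi-infinite proper iterated amalgam (i), cut it at one edge to get (ii), and deduce (iii)--(vi) from the resulting action of $N$ on a tree with finitely generated edge stabilizers. Where you genuinely diverge is in how the tree-theoretic facts are obtained. For (v) and (iv) the paper outsources the work: it notes that an amalgam over proper subgroups fixes no point, end or pair of ends on its Bass--Serre tree and then invokes general results to produce two independent hyperbolic elements and a free subgroup; you instead build the hyperbolic elements $a_0a_1$ and $a_1'a_2$ by hand and use the $\pi$-height function to see that every ray of one axis meets height $0$ infinitely often while no ray of the other does, so the axes share no end --- a pleasantly self-contained replacement. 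For (vi) the paper first restricts to the minimal subtree and shows the elliptic factor must act trivially, forcing infinitely generated edge stabilizers; you keep the original tree and pay with a longer case analysis, but your decisive move --- pushing the finitely generated edge stabilizer containing $Q_0$ through the retraction $N\epi Q$ to conclude that $Q$ itself is finitely generated --- is both essential and correct (without it one only gets that $Q$ sits inside a finitely generated subgroup, which is no contradiction). Both your argument and the paper's gloss over the sub-case where the elliptic factor fixes exactly a pair of ends and no vertex; this is harmless, since $N$ then preserves the line joining them, again contradicting (v). Finally, in (iii) you correctly place the pointwise stabilizer of the axis inside a finitely generated \emph{edge} stabilizer; the paper's text says ``a conjugate of $C$'' at the corresponding point, which is evidently a slip for the edge group, so your version is the cleaner one.
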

\begin{proof}
Denote by (*) the property that $\pi$ has a non-ascending splitting over a finitely generated subgroup. We are going to check (*)$\Rightarrow$(\ref{nas12})$\Rightarrow$(\ref{nas2})$\Rightarrow$(\ref{nas3}), (\ref{nas2})$\Rightarrow$(\ref{nas5})$\Rightarrow$(\ref{nas4}), and (\ref{nas5})$\Rightarrow$(\ref{nasp}).

\noindent (*)$\Rightarrow$(\ref{nas12}) If $\pi$ has a non-ascending splitting $\HNN(B, \tau \colon U \iso V)$, then $N$ can be described \cite[\S 1.4]{Ser77b} as the amalgamation 
$$ \cdots B\ast_U B \ast_U B \ast_U B\cdots$$
where all left embeddings $U\mono B$ are the inclusion, and all right embeddings are given by $\tau$. So (\ref{nas12}) holds. If $G$ is finitely generated, then $B$ is finitely generated as well.

\noindent (\ref{nas12})$\Rightarrow$(\ref{nas2})
If $N$ has the form (\ref{nas12}), then in particular $N$ is the amalgam
$$ (\cdots A_{-2}\ast_{B_{-2}}A_{-1}\ast_{B_{-1}} A_0) \ast_{B_0} (A_1 \ast_{B_1}A_2\ast_{B_2} A_3\cdots),$$
where both vertex groups are infinitely generated and the edge group $B_0$ is finitely generated.

\noindent (\ref{nas2})$\Rightarrow$(\ref{nas3}) Suppose that $N=A*_BC$ with $B$ finitely generated and $A\neq B\neq C$. Let $N$ act on its Bass-Serre tree, and let $w$ be some hyperbolic element (e.g.\ $ac$ for $a\in A\smallsetminus B$ and $c\in C\smallsetminus B$), with axis $D$. Let $H$ be the centralizer of $w$. Observe that $H$ preserves the axis $D$ of $w$. Let $H_1$ be the kernel of the action of $H$ on $D$. So $H_1$ is contained in a conjugate of $C$, say the finitely generated subgroup $gCg^{-1}$. Since $H/H_1$ is a group of isometries of the linear tree, it has at most two generators, so $H$ is also contained in a finitely generated subgroup of $N$.

\noindent (\ref{nas2})$\Rightarrow$(\ref{nas5})$\Rightarrow$(\ref{nas4}). 
The implication (\ref{nas2})$\Rightarrow$(\ref{nas4}) is explicitly contained in \cite[Thm.~6.1]{Bas76}, 
but both implications are folklore and have, at least in substance been proved many times or in broader contexts. 
A direct proof of (\ref{nas5})$\Rightarrow$(\ref{nas4}) can be found in \cite[Lem.~2.6]{CuMo87}. 
Let us justify (\ref{nas2})$\Rightarrow$(\ref{nas5}). 
For an amalgam, an easy argument (see for instance \cite[Lem.~12 and 14]{dCor09}) shows that the action of an amalgam of two groups over subgroups of index at least 2 and 3 respectively on its Bass-Serre tree does not fix any point in the 1-skeleton, nor any end or pair of ends. 
By \cite[Thm.~2.7]{CuMo87}, there are two elements acting as hyperbolic isometries with no common end.

\noindent (\ref{nas5})$\Rightarrow$(\ref{nasp}) Suppose that $N=N_1\times N_2$ with $N_1$, $N_2$ both infinitely generated. Supposing (\ref{nas5}), we can suppose that the action is minimal (otherwise restrict to the convex hull of all axes of all hyperbolic elements). Since the product of two commuting elliptic isometries is elliptic, either $N_1$ or $N_2$, say $N_1$, contains an element $w$ acting as a hyperbolic isometry. Since $w$ centralizes $N_2$, the axis $D$ of $w$ is $N_2$-invariant. If $N_2$ contains a hyperbolic element, it follows that this axis is $D$, so by repeating the argument, $N_1$ in turn stabilizes $D$, so $N$ stabilizes $D$, a contradiction. Therefore $N_2$ acts by elliptic isometries. So it either fixes a point in the 1-skeleton, or fixes a unique point at infinity. In the latter case, this point is stabilized by all of $N$, a contradiction; in the former case, the set of $N_2$-fixed points is stable and therefore by minimality is all of the tree, i.e., $N_2$ acts trivially. So the edge stabilizers contain $N_2$ and therefore are infinitely generated, a contradiction.
\end{proof}

\begin{example}\label{sla}
We provide here a nontrivial example where Corollary \ref{crl:Corollary-for-ii} applies although $N$ contains non-abelian free subgroups. Fix $d\ge 3$. Consider the ring $A=\Z[x_n:n\in\Z]$ of polynomials in infinitely many variables. Let the group $\Z$ act on $A$ by ring automorphisms so that the generator $1\in\Z$ sends $x_n$ to $x_{n+1}$. This ring automorphism induces an automorphism $\phi$ of the subgroup $\textnormal{EL}_d(A)\subset\SL_d(A)$ generated by elementary matrices, i.e.\ those matrices $e_{ij}(a)$ with $a\in A$ and $1\le i\neq j\le d$, whose entries differ from the identity matrix only at the $(i,j)$ entry, which is $a$. (Actually $\textnormal{EL}_d(A)=\SL_d(A)$ by a nontrivial result of Suslin \cite{Sus77}, but we do not make use of this fact, except in the statement of Corollary \ref{isla}.)

We are going to check that $\EL_d(A)\rtimes_\phi\Z$ is finitely generated, and that $\pi:\EL_d(A)\rtimes\Z\epi\Z$ does not split over a finitely generated subgroup. So Corollary \ref{crl:Corollary-for-ii} applies; in particular, $\EL_d(A)\rtimes_\phi\Z$ is an extrinsic condensation group.

Let us first check that $\EL_d(A)\rtimes_\phi\Z$ is finitely generated, or equivalently that $\EL_d(A)$ is finitely generated as a $\langle\phi\rangle$-group. By definition, $\EL_d(A)$ is generated by elementary matrices; since $e_{ij}(a)e_{ij}(b)=e_{ij}(a+b)$, those elementary matrices for which $a$ is a monomial are enough; since $e_{ij}(ab)=[e_{ik}(a),e_{kj}(b)]$ (and $d\ge 3$), those for which $a=x_n$ for some $n$ are enough. This shows that $\EL_d(A)$ is generated, as $\langle\phi\rangle$-group, by $\{e_{ij}(x_0):1\le i\neq j\le d\}$. So $\EL_d(A)\rtimes_\phi\Z$ is finitely generated.

Clearly $\phi$ does not contract into a finitely generated subgroup, as it satisfies the following totally opposite property: for every finitely generated subgroup $M\subset\EL_d(A)$ and every $x\in\EL_d(A)\smallsetminus \{1\}$, the set $\{n\in\Z:\phi^n(x)\in M\}$ is finite.

To check that $\pi$ does not admit any non-ascending splitting, let us check that (\ref{nas5}) of Proposition \ref{cnas} fails. Indeed, we have $N=\bigcup_{n\ge 0}\EL_d(A_n)$ with $A_n=\Z[x_i:-n\le i\le n]$ and it is known \cite[p.~681]{CuVo96} that $\EL_d(A_n)$ has Property (FA) (every action on a tree has a fixed point) for all $d\ge 3$. In particular, every isometric action of $\EL_d(A)$ on a tree is by elliptic isometries, contradicting (\ref{nas5}) of Proposition \ref{cnas}.

A variant of the result is obtained by considering $\GL_\Z(R)$, the group of infinite matrices with entries in a fixed finitely generated ring $R$ (with unit $1\neq 0$, possibly non-commutative), indexed by $\Z\times\Z$ and that differ from identity matrix entries for finitely many indices only, and its subgroup $\EL_\Z(R)$ generated by elementary matrices. Let $\psi$ be the automorphism of $\EL_\Z(R)$ shifting indices (so that $\psi(e_{ij}(a))=e_{i+1,j+1}(a)$). Using arguments similar to those of the previous construction, $\EL_\Z(R)\rtimes_\psi\Z$ is finitely generated, $\psi^{\pm 1}$ does not contract into a finitely generated subgroup. Besides, $\EL_\Z(R)$ fails to satisfy (\ref{nas3}) of Proposition \ref{cnas}. So $\EL_\Z(R)\rtimes_\psi\Z$ is an extrinsic condensation finitely generated group. If the K-theory abelian group $K_1(R)\simeq\GL_\Z(R)/\EL_Z(R)$ is finitely generated, then the argument also works for $\GL_\Z(R)\rtimes_\psi\Z$.
\end{example}

\subsubsection{Proof of Theorem \ref{thm:Structure-theorem}}\label{ptst}
Choose an element $t \in G$  with $\pi(t) = 1$.
Since $G$ is finitely generated, there exists a finite subset $S$ in  $N = \ker \pi$
such that $\bigcup_{n \in \Z} t^nSt^{-n}$ generates $N$. 

For $n\ge 0$, 
let $B_n$ be the subgroup generated by the finite set $\bigcup_{0 \leq j \leq n} t^jSt^{-j}$; for $n\ge 1$, set $U_n = B_{n-1}$ and $V_n = t U_n t^{-1}$; note that both $U_n$ and $V_n$ are contained in $B_n$. We also set $U_0=V_0=\{1\}$.
For $n\ge 0$, define $G_n$ to be the HNN-extension
\[
\HNN(B_n, t_n \mid \tau_n \colon U_n \iso V_n),
\]
the isomorphism $\tau_n$ being given by conjugation by $t$ in $G$. 
Then there exist unique epimorphisms $G_n \epi G_{n+1} \epi G$ 
that extend the inclusions $B_n \incl B_{n+1} \incl N$ and map $t_n$ to $t_{n+1}$ and $t_{n+1}$ to $t$.
These epimorphisms are clearly compatible with the HNN-structure of the groups involved; in particular, there is a natural surjective homomorphism $\phi:\underrightarrow{\lim}\,G_n\epi G$.

Let $W_n$ denote the kernel of the limiting map $\lambda_n \colon G_n \epi G$ 
and consider the action of $G_n$ on its Bass-Serre tree $T$. 
Since $\lambda_n$ is injective on the vertex group $B_n$,
the action of $W_n$ on $T$ is free and so $W_n$ is a free group.

We claim that $\phi$ is an isomorphism. This amounts to showing that every element $w$ in the kernel of $G_0\epi G$ lies in the kernel of $G_0\epi G_n$ for $n$ large enough. Write $w=t^{n_1}s_1\dots t^{n_k}s_k$ with $n_i\in\Z$ and $s_i\in S^{\pm 1}$. Clearly $\sum n_i=0$, so $w$ can be rewritten as $\prod_{i=1}^kt^{m_i}s_it^{-m_i}$ for integers $m_i=n_1+\dots + n_i$. After conjugation by some power of $t$, we can suppose that all $m_i$ are positive; let $m$ be the maximum of the $m_i$. Then, in $G_m$, $w$ belongs to $V_m$, which is sent one-to-one to $G$. Therefore $w=1$ in $G_m$.

Suppose now that for some $n$ the kernel $W_{n}$ is generated, as a normal subgroup, by a finite subset $\FF$. 
Since $G$ is the inductive limit of the $G_k$ there exists then an index $m \geq n$ 
such that the canonical image of $\FF$ under the canonical epimorphism $G_{n} \epi G_m$ 
lies in the trivial subgroup $\{1 \}$ of $G_m$. 
The canonical map $G_{n} \epi G_m$ induces therefore an epimorphism $G_{n}/W_{n} \epi G_m$.
It follows that the composition $G_n/W_n \epi G_m \epi G$ is an isomorphism, 
whence $\lambda_m \colon G_m \epi G$  is bijective and so statement (\ref{st1}) holds.
If, on the other hand, each kernel $W_n$ is infinitely generated as a normal subgroup, 
assertion (\ref{st2}) is satisfied.

\subsection{The geometric invariant $\Sigma(G)$}\label{subns}

The question whether and how an epimorphism  $\pi \colon G\epi \Z$ splits over a finitely generated subgroup 
can profitably be investigated with the help of the invariant $\Sigma(G)$ introduced in \cite{BNS87}.

\begin{definition}
\label{definition:Sigma(G)}
 $\Sigma(G)$ is the conical subset of $\Hom(G, \R)$ 
 consisting of all \emph{non-zero} homomorphisms $\chi \colon G \to \R$ with the  property 
 that the commutator subgroup $G'$ is finitely generated as $P$-group
 for some finitely generated submonoid $P$ of $G_\chi=\{g \in G \mid \chi(g)\geq 0\}$.
 
 The complement of $\Sigma(G)$ in $\Hom(G, \R)$ 
 will be denoted by $\Sigma^c(G)$.
 \end{definition}
In \cite{BNS87}, the invariant is denoted by $\Sigma_{G'}(G)$ 
and is defined as the projection of the above to the unit sphere, or equivalently the set of rays of the vector space $\Hom(G,\R)$. Since $\Sigma(G)$ is stable by multiplication by positive real numbers, these are obviously equivalent data.

 It is an important non-trivial fact that $\Sigma(G)$ is an open subset of $\Hom(G, \R)$.
 
 There are various equivalent definitions of $\Sigma(G)$ 
and there exists a restatement of the condition $\chi \in \Sigma(G)$  in the case  where $\chi(G) = \Z$
that is particularly concise. 
Indeed, the equivalence of (i) and (iii) in \cite[Prop.~4.3]{BNS87} yields
\begin{proposition}
\label{prp:Link-with-BS78}
Let $G$ be a finitely generated group, $\chi \colon G \epi \Z$ an epimorphism, and $t\in G$ with $\chi(t)=1$. 
Then $\chi \in \Sigma(G)$ if and only if  
the action of $t^{-1}$ on $N=\ker \chi$ contracts into a finitely generated subgroup.
\end{proposition}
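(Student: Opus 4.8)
The plan is to obtain the statement by reconciling the definition of $\Sigma(G)$ with the contraction condition, which is exactly the content of the equivalence of conditions (i) and (iii) in \cite[Prop.~4.3]{BNS}. Throughout I write $N=\ker\chi$, fix $t$ with $\chi(t)=1$, and let $\alpha_t\colon N\to N$ be conjugation $g\mapsto tgt^{-1}$, so that the ``action of $t^{-1}$'' in the statement is $\alpha_{t^{-1}}=\alpha_t^{-1}$, $g\mapsto t^{-1}gt$. Since $\chi(t)=1\ge 0$ we have $t\in G_\chi$, and since $G$ is finitely generated there is a finite $V\subseteq N$ with $G=\langle t,V\rangle$. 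I would route the proof through the intermediate reformulation: $\chi\in\Sigma(G)$ if and only if $N$ is finitely generated as a group over the forward monoid $\{t^n\mid n\ge 0\}$ acting by $\alpha_t$, that is, $N=\langle \alpha_t^n(V')\mid n\ge 0\rangle$ for some finite $V'\subseteq N$.

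The first, clean step is to show this forward-generation property is equivalent to the contraction of $\alpha_{t^{-1}}$ into a finitely generated subgroup. If $\alpha_{t^{-1}}$ contracts into a finitely generated $B$, then $\alpha_{t^{-1}}(B)\subseteq B$ gives $B\subseteq\alpha_t(B)$, so the subgroups $\alpha_t^n(B)$ ascend and their union is $\bigcup_{n\ge 0}\alpha_{t^{-1}}^{-n}(B)=N$; a finite generating set of $B$ then exhibits $N$ as forward-generated. Conversely, given $N=\langle \alpha_t^n(V')\mid n\ge 0\rangle$, set $B_k=\langle \alpha_t^j(V')\mid 0\le j\le k\rangle$; these ascend to $N$, and since the finite set $\alpha_{t^{-1}}(V')$ lies in $N$ it is contained in some $B_{k_0}$. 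Taking $B=B_{k_0}$, one checks $\alpha_{t^{-1}}(B)\subseteq B$ (the only potentially new generators $\alpha_t^{-1}(V')$ land in $B_{k_0}$ by the choice of $k_0$) and $\bigcup_{n\ge 0}\alpha_t^n(B)=N$, so $\alpha_{t^{-1}}$ contracts into $B$. Finite generation of $G$ is used here in an essential way.

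The substantive step is the equivalence of $\chi\in\Sigma(G)$ with forward-generation of $N$. In one direction, if $G'$ is finitely generated over a finitely generated submonoid $P\subseteq G_\chi$, then $N/G'$ is a finitely generated abelian group (it is the kernel of the induced surjection $G_{ab}\epi\Z$), hence finitely generated over $P$; since finite generation over $P$ is preserved under extensions of $P$-groups, $N$ is finitely generated over $P$. One then reduces the arbitrary monoid $P$ to the cyclic forward monoid by writing each generator as $n_i t^{a_i}$ with $a_i=\chi(p_i)\ge 0$ and $n_i\in N$, and absorbing the finitely many $N$-parts $n_i$ into the base set. Conversely, if $N$ is forward-generated, I would take the submonoid $P$ generated by $t$ and $V$ together with $X=\{[t,v],[v,v']\mid v,v'\in V\}\subseteq G'$, and argue that the $P$-conjugates of $X$ generate the full normal closure of $X$ in $G$, which is $G'$.

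I expect the main obstacle to be precisely this last equivalence. Matching the module-theoretic condition defining $\Sigma(G)$ (finite generation of $G'$ over a monoid) with a generation statement about $N$ under the single automorphism $\alpha_t$ requires controlling the discrepancy between $G'$ and $N$, and reducing an arbitrary finitely generated submonoid of $G_\chi$ to the forward powers of $t$ while tracking the inner-$N$ conjugations coming from the $N$-parts of the monoid generators. In the converse direction, the delicate point is showing that the forward $P$-conjugates of the commutator set $X$ already exhaust the normal closure $G'$, rather than a proper and possibly non-normal subgroup; this is exactly where the tameness encoded by $\Sigma$ enters, and where I would ultimately invoke \cite[Prop.~4.3]{BNS} instead of reproducing the full argument.
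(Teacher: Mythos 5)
Your proposal is correct and follows essentially the same route as the paper: the paper's entire proof is the observation that the statement is the equivalence of (i) and (iii) in \cite[Prop.~4.3]{BNS}, and you likewise defer the substantive equivalence to that citation. The extra bookkeeping you supply (the equivalence between ``$\alpha_{t^{-1}}$ contracts into a finitely generated subgroup'' and forward generation of $N$ under $\alpha_t$, and the reduction from an arbitrary finitely generated submonoid of $G_\chi$ to the forward powers of $t$) is sound and merely makes explicit a translation the paper leaves implicit.
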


\subsubsection{Application to extrinsic condensation groups}
\label{sssec:Application-extrinsic-condensation}
Definition \ref{definition:Sigma(G)} implies 
that $\Sigma(G)$ is invariant under multiplication by positive real numbers.
Proposition \ref{prp:Link-with-BS78} allows us therefore to deduce from Corollary \ref{crl:Corollary-for-ii}
the following result:
\begin{corollary}
\label{crl:Criterion-for-extrinsic-condensation}
Let $G$ be a finitely generated group.
If $\Sigma^c(G)$ contains a rational line $\R\chi$ with $\chi=G\epi\Z$ such that $\chi$ does not have a non-ascending splitting, then $G$ is an extrinsic condensation group. 
\end{corollary}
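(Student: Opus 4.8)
The plan is to reduce the statement to Corollary \ref{crl:Corollary-for-ii}: it suffices to prove that $\chi$ does not split over a finitely generated subgroup \emph{at all}, and then Corollary \ref{crl:Corollary-for-ii}(\ref{cb}) immediately yields that $G$ is an extrinsic condensation group. Since every splitting of $\chi$ over a finitely generated subgroup is, by definition, either ascending or non-ascending, and since the hypothesis already excludes non-ascending splittings over finitely generated subgroups, the only remaining task is to rule out an ascending splitting over a finitely generated subgroup.

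To do this I would fix $t\in G$ with $\chi(t)=1$ and let $\alpha_t$ denote conjugation by $t$ on $N=\ker\chi$. By the proposition characterizing ascending splittings in terms of contracting automorphisms (the one following Definition \ref{definition:Contracting-automorphism}), $\chi$ admits an ascending splitting over a finitely generated subgroup if and only if one of $\alpha_t$, $\alpha_t^{-1}$ contracts into a finitely generated subgroup of $N$. The idea is then to translate each of these two contraction conditions into a membership statement for $\Sigma(G)$ via Proposition \ref{prp:Link-with-BS78}.

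Concretely, $\alpha_t^{-1}$ is the action of $t^{-1}$ on $N$, so Proposition \ref{prp:Link-with-BS78} gives that $\alpha_t^{-1}$ contracts into a finitely generated subgroup if and only if $\chi\in\Sigma(G)$. For the other automorphism I would apply the same proposition to the epimorphism $-\chi\colon G\epi\Z$, using the stable letter $t^{-1}$ (which satisfies $(-\chi)(t^{-1})=1$); its inverse acts on $N$ by $\alpha_t$, so $\alpha_t$ contracts into a finitely generated subgroup if and only if $-\chi\in\Sigma(G)$.

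Finally, since $\Sigma(G)$ is conical, the hypothesis that $\Sigma^c(G)$ contains the full rational line $\R\chi$ is equivalent to having both $\chi\notin\Sigma(G)$ and $-\chi\notin\Sigma(G)$. By the two equivalences above, neither $\alpha_t$ nor $\alpha_t^{-1}$ contracts into a finitely generated subgroup of $N$, so $\chi$ admits no ascending splitting over a finitely generated subgroup; combined with the hypothesis excluding non-ascending ones, $\chi$ admits no splitting over a finitely generated subgroup whatsoever, and Corollary \ref{crl:Corollary-for-ii} closes the argument. The step requiring the most care—and the main potential pitfall—is the sign bookkeeping: correctly matching ``$\alpha_t^{-1}$ contracts'' with $\chi\in\Sigma(G)$ and ``$\alpha_t$ contracts'' with $-\chi\in\Sigma(G)$. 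This asymmetry is precisely why the hypothesis must concern the whole line $\R\chi$ rather than a single ray, since a single membership $\chi\notin\Sigma(G)$ would only exclude one of the two possible directions of contraction.
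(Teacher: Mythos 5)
Your proposal is correct and follows essentially the same route as the paper: the paper likewise deduces the corollary from Corollary \ref{crl:Corollary-for-ii} by using the conical invariance of $\Sigma(G)$ together with Proposition \ref{prp:Link-with-BS78} to rule out an ascending splitting, the hypothesis ruling out the non-ascending case. Your sign bookkeeping ($\chi\notin\Sigma(G)$ excludes contraction by $\alpha_t^{-1}$, $-\chi\notin\Sigma(G)$ excludes contraction by $\alpha_t$) is exactly the point the paper leaves implicit, and you have it right.
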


\begin{example}
Let $G_1,G_2$ be finitely generated groups, and let $\chi_i:G_i\epi\Z$, $i=1,2$ be epimorphisms. Assume that $\chi_i\in\Sigma^c(G_i)$ for $i=1,2$. Given two positive integers $m_1,m_2$, define $G$ as the fibre product
$$G=\{(g_1,g_2)\in G_1\times G_2)\mid m_1\chi_1(g_1)+m_2\chi_2(g_2)=0\}.$$
Then $G$ is finitely generated and is an extrinsic condensation group, and in particular is infinitely presented. Indeed, if on $G$ we define $\chi(g_1,g_2)=\chi_1(g_1)$, then it follows from the definition that the rational line $\R\chi$ is contained in $\Sigma^c(G)$. On the other hand, we have $\textnormal{Ker}(\chi)=\textnormal{Ker}(\chi_1)\times\textnormal{Ker}(\chi_2)$ and both factors are infinitely generated because $\chi_i\in\Sigma^c(G_i)$, so by Proposition \ref{cnas}(\ref{nasp}), we deduce that $G$ does not have a non-ascending splitting over $\chi$. Thus all assumptions of Corollary \ref{crl:Criterion-for-extrinsic-condensation} are satisfied. 
This example generalizes \cite[Theorem 6.1]{BrMi09}.
\end{example}

\begin{remark}
\label{remark:Polyhedrality}
In all cases where $\Sigma^c(G)$ has been determined explicitly
the invariant has turned out to be a \emph{polyhedral} cone.
And with the exception of certain groups of PL-homeomorphisms (see \cite[\S 8]{BNS87})
all known subsets are, in fact, \emph{rational-poly\-hedral}.
In these cases, the condition ``$\Sigma^c(G)$ \emph{contains a rational line}'' is, of course,
equivalent to ``$\Sigma^c(G)$ \emph{contains a line}''.
\end{remark}

\subsection{Metabelian groups and beyond}
\label{ssec:Metabelian-groups-and-beyond}
We need the following two major results about the $\Sigma$-invariant of the pre-\cite{BNS87}-era:
Let $G$ be a finitely generated metabelian group.
Then
\begin{enumerate}[(1)]
\item\label{imp1} $G$ is infinitely presented if, and only if, $\Sigma^c(G)$ contains a line (Theorem A(ii) in \cite{BiSt80}).
\item\label{imp2} $\Sigma^c(G)$ is rational polyhedral (Theorem E in \cite{BiGr84}).
\end{enumerate}
In view of Corollary \ref{crl:Criterion-for-extrinsic-condensation} and the remark following it
these two results imply
that \emph{a finitely generated metabelian group is of extrinsic condensation 
if, and only, if it is not finitely presentable}.

Parts of this characterization of metabelian, extrinsic condensation groups 
are also valid for larger classes of solvable groups. To gain the proper perspective,
we begin with a general result.
If $G$ is a group, denote by $G'$ its derived group and $\gamma_3(G') =[[G',G'],G']$ the third term of the lower central series of $G'$, so that $G/\gamma_3(G')$ is the largest (2-nilpotent)-by-abelian quotient of $G$.

\begin{theorem}\label{prp:Results-for-solvable-groups}
Let $G$ be a finitely generated solvable group.
Then the following statements hold:
\begin{enumerate}[(1)]
\item\label{fsa} if $G$ is finitely presentable, so is its metabelianization $G/G''$;
\item\label{fsb} if $G/G''$ is finitely presentable, the nilpotent-by-abelian quotient $G/\gamma_3(G')$ 
satisfies max-n and in particular is not of intrinsic condensation;
\item\label{fsc} if $G/G''$ is not finitely presentable, then $G$ is largely related and in particular is of extrinsic condensation.
\end{enumerate}
\end{theorem}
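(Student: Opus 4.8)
The plan is to route all three parts through the invariant $\Sigma(\bar G)$ of the metabelianization $\bar G = G/G''$ together with the splitting dichotomy of Theorem \ref{thm:Structure-theorem}. Two preliminary observations drive everything. First, since the quotient map $G \epi \bar G$ has kernel $G'' \subseteq G'$, it induces an identification $\Hom(G,\R) = \Hom(\bar G,\R)$, and projecting a system of $P$-generators of $G'$ to $\bar G' = G'/G''$ shows the monotonicity $\Sigma(G) \subseteq \Sigma(\bar G)$, i.e.\ $\Sigma^c(\bar G) \subseteq \Sigma^c(G)$. Second, because $G$ is solvable it contains no non-abelian free subgroup, so by the contrapositive of Proposition \ref{cnas}(\ref{nas4}) no epimorphism $G \epi \Z$ admits a non-ascending splitting over a finitely generated subgroup.

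For parts (\ref{fsa}) and (\ref{fsc}) I would isolate the common mechanism. Suppose $\bar G$ is not finitely presented. By fact (\ref{imp1}) (Theorem A(ii) of \cite{BiSt80}) the set $\Sigma^c(\bar G)$ contains a line, and by fact (\ref{imp2}) (Theorem E of \cite{BiGr84}) it is rational polyhedral, so this line is rational: there is an epimorphism $\chi \colon \bar G \epi \Z$ with both $\chi$ and $-\chi$ in $\Sigma^c(\bar G)$. Pulling $\chi$ back to an epimorphism $\chi \colon G \epi \Z$, the monotonicity above gives $\pm\chi \in \Sigma^c(G)$, so by Proposition \ref{prp:Link-with-BS78} neither $\chi$ nor $-\chi$ contracts $\ker\chi$ into a finitely generated subgroup; that is, $\chi$ has no ascending splitting. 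Combined with the absence of a non-ascending splitting noted above, $\chi$ does not split over any finitely generated subgroup. This already yields (\ref{fsa}) by contraposition: if $G$ were finitely presented, every epimorphism onto $\Z$ would split over a finitely generated subgroup by Theorem A of \cite{BiSt78}, a contradiction; hence $G$ finitely presented forces $\bar G$ finitely presented. It also yields (\ref{fsc}): Corollary \ref{crl:Corollary-for-ii} applied to this non-splitting $\chi$ shows $G$ is largely related, whence $G$ is of extrinsic condensation by Proposition \ref{lr}.

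For part (\ref{fsb}) I would work inside $\tilde G = G/\gamma_3(G')$, writing $K = G''/\gamma_3(G') = \ker(\tilde G \epi \bar G)$, which is abelian since $\gamma_3(G') = [G'',G']$. As $[G',G''] = \gamma_3(G')$, the subgroup $\tilde G' = G'/\gamma_3(G')$ acts trivially on $K$, so the conjugation action of $\tilde G$ on $K$ factors through $Q = G/G'$, making $K$ a $\Z Q$-module. Because $\tilde G'$ is nilpotent of class $\le 2$ with abelianization $A = G'/G''$, the commutator map provides a $Q$-equivariant surjection $\Lambda^2_\Z A \epi K$, and $\Lambda^2_\Z A$ is a quotient of $A \otimes_\Z A$ with diagonal action. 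Here the finite presentability of $\bar G$ enters through the tame-module theory of \cite{BiSt80}: it forces $A \otimes_\Z A$, and hence its quotients $\Lambda^2_\Z A$ and $K$, to be finitely generated over the Noetherian ring $\Z Q$. Thus $K$ satisfies max-$\tilde G$ (ascending chains of $\Z Q$-submodules stabilize), while $\bar G$, being finitely generated metabelian, satisfies max-n by P.\ Hall's theorem; stability of max-$\Omega$ under extensions, as used in the proof of Lemma \ref{LemMaxN} (see \cite[3.7]{Rob96}), then gives max-n for $\tilde G$. Finally, max-n forces every normal subgroup of $\tilde G$ to be the normal closure of a finite set, so $\NN(\tilde G)$ is countable and contains no non-empty perfect subset; in particular $\{1\}$ is not a condensation point, so $\tilde G$ is not of intrinsic condensation.

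The formal parts (\ref{fsa}) and (\ref{fsc}) are essentially bookkeeping once the monotonicity $\Sigma^c(\bar G) \subseteq \Sigma^c(G)$ and the ``no non-ascending splitting'' observation are in place. The real content, and the step I expect to be most delicate, is the finite generation of $A \otimes_\Z A$ over $\Z Q$ in part (\ref{fsb}): this is the module-theoretic incarnation of finite presentability for metabelian groups, and I would need to extract it carefully from the $2$-tameness criterion of \cite{BiSt80} (the condition that $\Sigma^c(\bar G)$ contain no line), verifying that $2$-tameness of $A$ implies finite generation of the diagonal tensor square.
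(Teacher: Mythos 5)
Your proposal is correct, and for part (\ref{fsc}) it coincides with the paper's argument: establish that $\Sigma^c(G/G'')$ contains a rational line via facts (\ref{imp1}) and (\ref{imp2}), push it into $\Sigma^c(G)$ by the monotonicity $\Sigma^c(G/G'')\subseteq\Sigma^c(G)$, and conclude by Corollary \ref{crl:Criterion-for-extrinsic-condensation} (you are in fact slightly more careful than the paper in explicitly ruling out non-ascending splittings via Proposition \ref{cnas}(\ref{nas4}) and solvability). Where you genuinely diverge is in parts (\ref{fsa}) and (\ref{fsb}), which the paper disposes of by citing \cite[Thm.~B]{BiSt80} and \cite[Thm.~5.7]{BiSt80} respectively. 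For (\ref{fsa}) you instead extract it as a byproduct of the mechanism for (\ref{fsc}): the non-splitting character $\chi$ produced there contradicts \cite[Thm.~A]{BiSt78} if $G$ is finitely presented (equivalently, largely related groups are never finitely presented). This is a nice economy that makes (\ref{fsa}) a corollary of (\ref{fsc}) rather than an independent citation, and it is valid --- the only boundary case, $\Hom(G,\R)=0$, is excluded since then $\Sigma^c(G/G'')$ contains no line and $G/G''$ is finitely presented by (\ref{imp1}). For (\ref{fsb}) you essentially reconstruct the proof of the cited \cite[Thm.~5.7]{BiSt80}: the identification $\gamma_3(G')=[G'',G']$, the $\Z Q$-module structure on $K=G''/\gamma_3(G')$, the equivariant surjection $\Lambda^2_\Z A\epi K$, P.~Hall's max-n for $G/G''$, and extension-closure of max-$\Omega$ are all correct; the one input you leave as a black box --- that finite presentability of $G/G''$ (i.e.\ tameness of $A$) forces $A\otimes_\Z A$ with the diagonal $Q$-action to be finitely generated over the Noetherian ring $\Z Q$ --- is precisely the tensor-square theorem of \cite{BiSt80}, so your reliance on the literature is no greater than the paper's, just more localized and transparent. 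The trade-off: the paper's proof is three lines of citations, while yours exposes the actual mechanism and makes the theorem nearly self-contained relative to the rest of the paper.
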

\begin{proof}
Claim (\ref{fsa}) is a special case of \cite[Thm.~B]{BiSt80}. 
If $G/G''$ is finitely presentable, then $G/\gamma_3(G')$ satisfies max-n by \cite[Thm.~5.7]{BiSt80},
so it has only countably many normal subgroups and thus (\ref{fsb}) holds.
Assume now that  $G/G''$ does not admit a finite presentation.
By properties  (1) and (2) stated in the above,
the invariant $\Sigma^c(G/G'')$ contains then a rational line. 
An easy consequence of definition \ref{definition:Sigma(G)} is that $\Sigma^c(G/G'')\subset\Sigma^c(G)$, so $\Sigma^c(G)$ contains a rational line as well.
Claim (\ref{fsc}) now follows from Corollary \ref{crl:Criterion-for-extrinsic-condensation}.
\end{proof}

\begin{remark}
\label{remark:Sigma-extrinsic-condensation}
Let $G$ be a finitely generated (2-nilpotent)-by-abelian group (i.e.\ with $\gamma_3(G') = \{1\}$).
If the metabelianization $G/G''$ is finitely presentable, then $G$ is not of intrinsic condensation by Claim (\ref{fsb})
in the preceding proposition.
Whether or not it is of (extrinsic) condensation depends, however, on the particular group.
To see this consider Abels' group $A_4$ studied in \S\ref{ssec:Abels-groups}. Then $B=A_4/\ZZ(A_4)$ is (2-nilpotent)-by-abelian, and so is $B\times B$, and their metabelianizations are finitely presentable; however, by Examples \ref{abelsnik} and \ref{abelsni}, $B\times B$ is of extrinsic condensation but $B$ is not (and is not of condensation either).
\end{remark}

\begin{corollary}
\label{crl:Fp-centre-by-metabelian-groups}
For every finitely generated centre-by-metabelian group $G$
the following statements are equivalent:
\begin{enumerate}[(i)]
\item\label{cm1} $G$ is not finitely presentable;
\item\label{cm2} $G/G''$ is not finitely presentable;
\item\label{cm3} $\Sigma^c(G)$ contains a rational line;
\item\label{cm4} $G$ is largely related;
\item\label{cm5} $G$ is of extrinsic condensation;
\item\label{cm6} $G$ is of condensation.
\end{enumerate}
\end{corollary}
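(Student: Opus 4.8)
The starting remark is that, since $G$ is centre-by-metabelian, $G''\subseteq\ZZ(G)$ and hence $\gamma_3(G')=[G'',G']=1$: thus $G=G/\gamma_3(G')$ is (2-nilpotent)-by-abelian, Theorem \ref{prp:Results-for-solvable-groups} applies verbatim to $G$ itself, and, being solvable of derived length at most $3$, $G$ contains no non-abelian free subgroup. I will prove the cycle $(\ref{cm1})\Rightarrow(\ref{cm2})\Rightarrow(\ref{cm3})\Rightarrow(\ref{cm4})\Rightarrow(\ref{cm5})\Rightarrow(\ref{cm6})\Rightarrow(\ref{cm1})$. For $(\ref{cm1})\Rightarrow(\ref{cm2})$ I would argue by contraposition: if $G/G''$ is finitely presented, Theorem \ref{prp:Results-for-solvable-groups}(\ref{fsb}) shows that $G$ satisfies max-n, so the central normal subgroup $G''$, being finitely generated as a normal subgroup, is finitely generated as an abelian group; then $G$ is a central extension of the finitely presented group $G/G''$ by a finitely generated abelian group, and such an extension is finitely presented.

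The heart of the argument is $(\ref{cm2})\Rightarrow(\ref{cm3})\Rightarrow(\ref{cm4})$, which runs through the invariant $\Sigma$. Since $G/G''$ is finitely generated metabelian, its being infinitely presented forces $\Sigma^c(G/G'')$ to contain a line by result (\ref{imp1}); this line can be chosen rational because $\Sigma^c(G/G'')$ is rational polyhedral by result (\ref{imp2}) (Remark \ref{remark:Polyhedrality}), and through the identification $\Hom(G,\R)=\Hom(G/G'',\R)$ together with the inclusion $\Sigma^c(G/G'')\subseteq\Sigma^c(G)$ this yields $(\ref{cm3})$. For $(\ref{cm3})\Rightarrow(\ref{cm4})$, given a rational line $\R\chi\subseteq\Sigma^c(G)$ I would rescale so that $\chi\colon G\epi\Z$; then neither $\chi$ nor $-\chi$ lies in $\Sigma(G)$, so $\chi$ admits no ascending splitting over a finitely generated subgroup by Proposition \ref{prp:Link-with-BS78}, while the absence of non-abelian free subgroups rules out a non-ascending splitting via Proposition \ref{cnas}(\ref{nas4}). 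Hence $\chi$ splits over no finitely generated subgroup, and Corollary \ref{crl:Corollary-for-ii} makes $G$ largely related.

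The remaining implications are formal: $(\ref{cm4})\Rightarrow(\ref{cm5})$ is Proposition \ref{lr}, $(\ref{cm5})\Rightarrow(\ref{cm6})$ is Lemma \ref{condei}(\ref{ec}), and for $(\ref{cm6})\Rightarrow(\ref{cm1})$ I would contrapose once more: a finitely presented $G$ has $G/G''$ finitely presented by Theorem \ref{prp:Results-for-solvable-groups}(\ref{fsa}), hence satisfies max-n and is not of intrinsic condensation by part (\ref{fsb}), hence is not of condensation since it is finitely presented (Lemma \ref{condei}(\ref{ic})). The one step I expect to require genuine care is the finite generation of $G''$ inside $(\ref{cm1})\Rightarrow(\ref{cm2})$: this is exactly where the centre-by-metabelian hypothesis (forcing $\gamma_3(G')=1$, so that the max-n conclusion of Theorem \ref{prp:Results-for-solvable-groups}(\ref{fsb}) bears on $G$ itself) is indispensable and cannot be relaxed to mere (2-nilpotent)-by-abelianness.
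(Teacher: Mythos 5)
Your proposal is correct and follows essentially the same cyclic chain of implications as the paper, using the same ingredients (Theorem \ref{prp:Results-for-solvable-groups} and the vanishing of $\gamma_3(G')$ for the contrapositive steps, the results (\ref{imp1})--(\ref{imp2}) for $(\ref{cm2})\Rightarrow(\ref{cm3})$, and the $\Sigma$-criterion for $(\ref{cm3})\Rightarrow(\ref{cm4})$). The only difference is cosmetic: you spell out the exclusion of a non-ascending splitting via Proposition \ref{cnas}(\ref{nas4}) and cite Corollary \ref{crl:Corollary-for-ii} directly, whereas the paper routes this through Corollary \ref{crl:Criterion-for-extrinsic-condensation} and leaves that verification implicit.
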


\begin{proof}
If $G/G''$ is finitely presentable the central subgroup $G''$ is finitely generated 
by (\ref{fsb}) of Theorem \ref{prp:Results-for-solvable-groups}
and so $G$ itself is finitely presentable; implication (\ref{cm1}) $\Rightarrow $(\ref{cm2}) therefore holds.
Implication (\ref{cm2}) $\Rightarrow $(\ref{cm3}) is a consequence of properties (\ref{imp1}) and (\ref{imp2}) 
stated at the beginning of this \S\ref{ssec:Metabelian-groups-and-beyond}
and implication (\ref{cm3}) $\Rightarrow $(\ref{cm4}) a consequence of Corollary \ref{crl:Criterion-for-extrinsic-condensation}.
Implication (\ref{cm4}) $\Rightarrow $(\ref{cm5}) and (\ref{cm5}) $\Rightarrow $(\ref{cm6}) are true for arbitrary $G$, see Proposition \ref{lr} and Lemma \ref{condei}(\ref{ec}).
For (\ref{cm6}) $\Rightarrow $(\ref{cm1}), by contraposition if $G$ is finitely presentable, we have by Lemma \ref{condei}(\ref{ic}) to check that $G$ is not of intrinsic condensation, and this holds by Theorem \ref{prp:Results-for-solvable-groups}(\ref{fsb}).
\end{proof}

\subsection{Thompson's group $F$}\label{parat}

Recall that a group $G$ is called {\em finitely discriminable} if $\{1\}$ is isolated in $\mathcal{N}(G)$, or equivalently if there exists a finite family $N_1,\dots,N_k$ of nontrivial normal subgroups of $G$ such that every nontrivial normal subgroup of $G$ contains one of the $N_i$. An elementary observation in \cite{CGP07} is that a finitely generated group $G$ is isolated in $\mathcal{G}_m$ (for any marking) if and only if it is both finitely presentable and finitely discriminable.

We now turn to the proof of Corollary \ref{crl:Normal-subgroups-in-F}. In this paragraph, $F$ denotes Thompson's group of the interval as defined in the introduction. This is a finitely presentable group whose derived subgroup $[F,F]$ is an infinite simple group (see \cite{CFP96} for basic properties of $F$).

\begin{lemma}\label{ntpfd}
Let $N$ be any normal subgroup of $F$. Then $N$ is finitely discriminable.
\end{lemma}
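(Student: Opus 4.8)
The plan is to prove something slightly stronger than required: I will exhibit a \emph{single} nontrivial normal subgroup of $N$ that is contained in every nontrivial normal subgroup of $N$, namely the commutator subgroup $[F,F]$. By the definition of finite discriminability recalled just before the lemma, producing such a subgroup shows that $\{1\}$ is isolated in $\mathcal{N}(N)$, so the one-element family $\{[F,F]\}$ witnesses the conclusion. I would first dispose of the case $N=\{1\}$, which is trivially finitely discriminable.

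The two inputs I would use are the classical facts about Thompson's group recorded in \cite{CFP96}: that $[F,F]$ is (infinite, hence nonabelian) simple, and that every nontrivial normal subgroup of $F$ contains $[F,F]$ (equivalently, every proper quotient of $F$ is abelian). Assuming $N\neq\{1\}$, the second fact gives $[F,F]\subseteq N$; and since $[F,F]\triangleleft F$, it is a fortiori normal in $N$, so it is a legitimate candidate for the witnessing subgroup.

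Now take any nontrivial $M\triangleleft N$ and examine $M\cap[F,F]$. As an intersection of two normal subgroups of $N$ it is normal in $N$, and since $[F,F]\subseteq N$ it is in particular normalized by $[F,F]$, i.e.\ normal in $[F,F]$. Simplicity of $[F,F]$ then forces $M\cap[F,F]=[F,F]$ — in which case $[F,F]\subseteq M$ and we are done — or $M\cap[F,F]=\{1\}$. In the latter case the standard estimate $[M,[F,F]]\subseteq M\cap[F,F]=\{1\}$ for normal subgroups shows $M\subseteq C_F([F,F])$.

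The crux — and the step I expect to require the most care — is then that $C_F([F,F])=\{1\}$, which rules out the second alternative. Rather than invoke the dynamics of the action on the interval, I would argue purely group-theoretically: the centralizer of a normal subgroup is normal, so $C_F([F,F])\triangleleft F$; its intersection with $[F,F]$ is exactly $Z([F,F])=\{1\}$ because $[F,F]$ is nonabelian simple; but every nontrivial normal subgroup of $F$ contains the nontrivial group $[F,F]$, and this is incompatible with trivial intersection unless $C_F([F,F])=\{1\}$. Hence $M=\{1\}$, contradicting the choice of $M$, so the second alternative cannot occur and $[F,F]\subseteq M$ for every nontrivial $M\triangleleft N$. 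This establishes that $N$ is finitely discriminable.
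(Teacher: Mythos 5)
Your proposal is correct and follows essentially the same route as the paper: both reduce to the facts that every nontrivial normal subgroup of $F$ contains the simple group $[F,F]$ and that $C_F([F,F])=\{1\}$, and both conclude that $[F,F]$ is the single witnessing subgroup. Your explicit derivation of $C_F([F,F])=\{1\}$ (via $C_F([F,F])\triangleleft F$ meeting $[F,F]$ in $Z([F,F])=\{1\}$) is exactly the step the paper compresses into the remark that the first fact ``implies in particular that the centralizer of $[F,F]$ is trivial.''
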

\begin{proof}
We repeatedly use the fact that every nontrivial normal subgroup of $F$ contains the simple group $[F,F]$ (see \cite{CFP96}). This implies in particular that the centralizer of $[F,F]$ is trivial.

The result of the lemma is trivial if $N=1$. Otherwise, $N$ contains $[F,F]$ (see \cite{CFP96}). If $M$ is a nontrivial normal subgroup of $N$, since $[F,F]$ has trivial centralizer, it follows that $M\cap [F,F]$ is nontrivial and hence, by simplicity of $[F,F]$, $M$ contains $[F,F]$. This shows that $N$ is finitely discriminable.
\end{proof}

Let $N$ be a normal subgroup of Thompson's group $F$ such that $F/N$ is infinite cyclic. If in Corollary \ref{crl:Normal-subgroups-in-F}, we have $r=0$, this means that $N$ is equal of $\textnormal{Ker}(\chi_0)$ or $\textnormal{Ker}(\chi_1)$. Otherwise there exists two coprime nonzero integers $p,q$ such that $N$ is equal to the group $$N_{p,q}=\{f\in F: p\chi_0(f)=q\chi_1(f)\}=\textnormal{Ker}(p\chi_0-q\chi_1).$$

\begin{lemma}\label{nfg}
The groups $\textnormal{Ker}(\chi_0)$ and $\textnormal{Ker}(\chi_1)$ are infinitely generated, while for any nonzero coprime integers $p,q$, the group $N_{p,q}$ is finitely generated.
\end{lemma}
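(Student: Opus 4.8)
The plan is to prove the two assertions of Lemma \ref{nfg} by completely different mechanisms: an elementary ``strictly increasing union'' argument for the infinite generation of $\ker\chi_0$ and $\ker\chi_1$, and an appeal to the Bieri--Neumann--Strebel invariant recalled in \S\ref{subns} for the finite generation of $N_{p,q}$.

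First I would treat $\ker\chi_0$, the case of $\ker\chi_1$ being symmetric (replacing neighbourhoods of $0$ by neighbourhoods of the endpoint $1$). An element $f\in F$ satisfies $\chi_0(f)=0$ exactly when its initial slope is $2^0=1$; since $f$ is piecewise linear with finitely many dyadic breakpoints and fixes $0$, this forces $f$ to be the identity on some interval $[0,2^{-n}]$. Accordingly I would set $A_n=\{f\in F \mid f|_{[0,2^{-n}]}=\mathrm{id}\}$, so that $\ker\chi_0=\bigcup_{n\ge 1}A_n$ is an increasing union of subgroups. Each $A_n$ is the group of elements of $F$ supported in the dyadic interval $[2^{-n},1]$ and is therefore isomorphic to $F$ by the self-similarity of $F$ established in \cite{CFP96}, hence finitely generated. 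Choosing for each $n$ a nontrivial element supported in $[2^{-(n+1)},2^{-n}]$ shows that every inclusion $A_n\subsetneq A_{n+1}$ is proper. As a group that is a strictly increasing union of subgroups cannot be finitely generated (any finite generating set would already lie in a single $A_n$, forcing the union to stabilize), it follows that $\ker\chi_0$ is infinitely generated.

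For the second assertion I would work with the character $\psi=p\chi_0-q\chi_1$. Since $\{\chi_0,\chi_1\}$ is a basis of $\Hom(F,\Z)$ and $\gcd(p,q)=1$, the image of $\psi$ is $p\Z+q\Z=\Z$, so $\psi\colon F\epi\Z$ is surjective with $\ker\psi=N_{p,q}$. By the Bieri--Neumann--Strebel finite-generation criterion, $N_{p,q}$ is finitely generated if and only if both rays $[\psi]$ and $[-\psi]$ lie in $\Sigma(F)$. Here I would quote the computation of \cite[Thm.~8.1]{BNS}, namely that $\Sigma^c(F)=\{[\chi_0],[\chi_1]\}$: in the coordinates afforded by the basis $(\chi_0,\chi_1)$, the complement of $\Sigma(F)$ meets only the two coordinate axes $\R\chi_0$ and $\R\chi_1$. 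Since $p$ and $q$ are both nonzero, the vectors $\psi=(p,-q)$ and $-\psi=(-p,q)$ have both coordinates nonzero and thus lie off both axes; hence $[\psi],[-\psi]\in\Sigma(F)$ and $N_{p,q}$ is finitely generated, which covers both signs of $pq$ at once.

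The main obstacle, and the only genuinely non-elementary input, is the precise location of $\Sigma^c(F)$: everything concerning $N_{p,q}$ rests on knowing that the complement of $\Sigma(F)$ is confined to the two coordinate axes. I would isolate this entirely in the cited fact from \cite[Thm.~8.1]{BNS}. Note that the conclusion is robust to sign conventions, since it only uses that $\Sigma^c(F)$ avoids every off-axis ray, and $[\pm\psi]$ is off-axis precisely because $pq\neq 0$. If a self-contained check were desired in place of the direct argument of the second paragraph, one could instead verify via Proposition \ref{prp:Link-with-BS78} that the contraction criterion for $\chi_0$ fails in both directions, reconfirming that $\ker\chi_0$ is infinitely generated.
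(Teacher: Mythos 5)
Your proof is correct, and the first half (the strictly increasing union $\ker\chi_0=\bigcup_n A_n$) is essentially identical to the paper's argument. For the finite generation of $N_{p,q}$, however, you take a genuinely different route: the paper gives a hands-on construction, using the $2$-homogeneity of the action of $N_{p,q}$ on the dyadic points of $]0,1[$ to reduce to the point stabilizer $N_{1/2}$, and then exhibiting an explicit finite generating set built from the self-similarity endomorphisms $i_0,i_1$ and finite-index subgroups $F_q$, $F^p$. You instead invoke the Bieri--Neumann--Strebel criterion (\cite[Thm.~B1]{BNS}) together with the computation $\Sigma^c(F)=\{[\chi_0],[\chi_1]\}$ from \cite[Thm.~8.1]{BNS}; since $pq\neq 0$ puts both $[\psi]$ and $[-\psi]$ off the two exceptional rays, $\ker\psi=N_{p,q}$ is finitely generated. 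This is exactly the ``more conceptual proof'' that the authors themselves flag in Remark \ref{rcp} immediately after the lemma, so it is a legitimate alternative; what you lose is self-containedness (you import two nontrivial external results where the paper's proof needs only basic facts about $F$) and the explicit generators, and what you gain is brevity and a uniform treatment of both signs of $pq$ at once. Your handling of the half-line versus full-line issue for $\Sigma^c(F)$ is careful and correct: the argument only needs that $\Sigma^c(F)$ is contained in the union of the two coordinate axes.
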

\begin{proof}
Observe that $\textnormal{Ker}(\chi_0)$ is the increasing union of subgroups $\bigcup\{g\in F:g_{|[0,1/n]}=\textnormal{Id}\}$ and $\textnormal{Ker}(\chi_1)$ can be written similarly; thus both groups are infinitely generated.

Now let $p,q$ be nonzero coprime integers. Since $N=N_{p,q}$ acts 2-homogeneously (i.e., transitively on unordered pairs) on $X=]0,1[\cap\Z[1/2]$ (because its subgroup $[F,F]$ itself acts 2-homogeneously), the stabilizer $N_{1/2}$ of $1/2$ is maximal in $N$ and therefore it is enough to check that $N_{1/2}$ is finitely generated.

Let $i_0,i_1$ be the following endomorphisms of $F$: if $f\in F$, $i_0(f)$ 
(resp. $i_1(f)$) acts on $[0,1/2]$ (resp.\ $[1/2,1]$) as $f$ acts on $[0,1]$ 
(i.e.\ $i_0(f)(t)=f(2t)/2$ for $0\le t\le 1/2$
 and $i_1(f)(t)=1/2+f(2t-1)/2$ for $1/2\le t\le 1$); while $i_0(f)$ is the identity on $[1/2,1]$ and $i_1(f)$ is the identity on $[0,1/2]$.
 
Define $F_k=\{g\in F:\chi_0(f)\in k\Z\}$ and similarly $F^k=\{g\in F:\chi_1(f)\in k\Z\}$; these are finite index subgroups of $F$ and are thus finitely generated. Fix $\sigma\in F$ with slope 2 at 0 and 1. Define, for $f\in F_q$, $j_0(f)=i_0(f)i_1(\sigma)^{p\chi_0(f)/q}$. By construction, $j_0(f)\in N_{1/2}$. Similarly, for $f\in F^p$, define $j_1(f)=i_0(\sigma^{q\chi_1(f)/p})i_1(f)$; then $j_1(f)\in N_{1/2}$.

We claim that $N_{1/2}$ is generated by $j_0(F_q)\cup j_1(F^p)$. Indeed, if $g\in N_{1/2}$, then $\chi_0(g)\in q\Z$, so by composition by an element in $j_0(F_q)$ we obtain an element which is the identity on $[0,1/2]$. In turn, by composition by an element of $j_1(F^p)$, we obtain an element which is the identity on $[1/2,1]$ and is a certain power $i_0(\sigma)^k$ on $[0,1/2]$; but since we obtained an element of $N$, necessarily $k=0$. This proves the claim and thus $N_{1/2}$, and hence $N$, are finitely generated.
\end{proof}

\begin{remark}\label{rcp}
There is a more conceptual proof of Lemma \ref{nfg}, based on the geometric invariant. It uses \cite[Thm.~B1]{BNS87}, which says that if $N$ is a normal subgroup of a finitely generated group $G$, then $N$ is finitely generated if and only if $\Sigma^c(G)\cap\Hom(G/N,\R)=\{0\}$, where $\Hom(G/N,\R)$ denotes the set of homomorphisms $G\to\R$ vanishing on $N$.
 
A simple verification \cite[Thm.~8.1]{BNS87} shows that $\Sigma^c(F)$ consists of the two half-lines generated by $\chi_0$ and $\chi_1$. This yields the statement of Lemma \ref{nfg}.
\end{remark}

\begin{lemma}
If $pq>0$ then $N_{p,q}$ is isomorphic to an ascending HNN-extension of $F$ and is an isolated (hence finitely presentable) group.
\end{lemma}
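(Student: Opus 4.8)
The plan is to realize $N_{p,q}$ explicitly as a strictly ascending HNN-extension whose vertex and edge groups are isomorphic to $F$, and then invoke the standard fact that an ascending HNN-extension of a finitely presented group over a finitely generated associated subgroup is finitely presented. Since $N_{p,q}=N_{-p,-q}$, I may assume $p,q>0$. On $N_{p,q}$ the characters $\chi_0,\chi_1$ are proportional, so there is a well-defined surjection $\tilde\chi\colon N_{p,q}\epi\Z$ with $\tilde\chi(f)=\chi_0(f)/q=\chi_1(f)/p$. An element of $F$ has slope $1$ at an endpoint exactly when it is the identity near that endpoint, so the kernel $K=\ker\tilde\chi$ consists precisely of those $f\in F$ that are the identity on a neighbourhood of $0$ and of $1$; equivalently $K=\bigcup F_{[a,b]}$, the directed union over dyadic subintervals $[a,b]\subset(0,1)$ of the copies $F_{[a,b]}\cong F$ of Thompson's group supported on $[a,b]$.

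Next I would choose the stable letter. Because $p,q>0$, I can pick $t\in F$ with dyadic breakpoints, slope $2^q$ at $0$ and slope $2^p$ at $1$, and a single interior fixed point $c$ (taken dyadic) with $t(x)>x$ on $(0,c)$ and $t(x)<x$ on $(c,1)$; then $\tilde\chi(t)=1$ and $t\in N_{p,q}$. Fix dyadic $a<c<b$ and set $B=F_{[a,b]}\cong F$. Conjugation by $t$ sends a map supported on $[a,b]$ to one supported on $t([a,b])=[t(a),t(b)]$, and $a<c<b$ forces $[t(a),t(b)]\subsetneq[a,b]$, so $tBt^{-1}=F_{[t(a),t(b)]}\subsetneq B$ is a proper inclusion. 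The dynamics give $t^{-n}(a)\searrow 0$ and $t^{-n}(b)\nearrow 1$, so every $f\in K$, being supported on some compact $[a',b']\subset(0,1)$, satisfies $[a',b']\subseteq t^{-n}([a,b])$ for $n$ large, whence $t^{n}ft^{-n}\in B$ and $f\in t^{-n}Bt^{n}$. Thus $K=\bigcup_{n\ge 0}t^{-n}Bt^{n}$ with genuinely nested terms, and since $gt^{-\tilde\chi(g)}\in K$ for every $g$, we obtain $N_{p,q}=\langle B,t\rangle$.

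It then remains to verify that $N_{p,q}$ is the honest HNN-extension $\Lambda=\HNN(B,t;\tau)$, $\tau(b)=tbt^{-1}$, and not merely a quotient of it; this faithfulness is the main obstacle. The relations of $\Lambda$ hold in $N_{p,q}$, giving a canonical epimorphism $\eta\colon\Lambda\epi N_{p,q}$ compatible with the characters onto $\Z$, so $\ker\eta$ lies in the base-kernel $\bar K=\bigcup_n t^{-n}Bt^{n}$ of $\Lambda$. Both $\bar K$ and $K$ are the ascending unions of the same system of copies of $F$, and $\eta$ maps the $n$-th term of the first onto the $n$-th term of the second, being the identity on $B$ (which embeds into $N_{p,q}$ by construction); hence $\eta$ is injective on each term, injective on $\bar K$, and therefore an isomorphism. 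Consequently $N_{p,q}\cong\HNN(F,t;\tau)$ is a strictly ascending HNN-extension of $F$ over a finitely generated (indeed finitely presented) associated subgroup, so it is finitely presented. Finally, $N_{p,q}$ is normal in $F$, hence finitely discriminable by Lemma \ref{ntpfd}, and being in addition finitely presented it is isolated by Proposition \ref{prp:Characterization-isolated-group}.
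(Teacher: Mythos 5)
Your argument is correct and follows essentially the same route as the paper: choose a stable letter $t\in N_{p,q}$ with positive slopes' exponents at both endpoints, take the vertex group to be the copy of $F$ supported on a dyadic subinterval $I$ with $t(I)\subsetneq I$, observe that the conjugates $t^{-n}F(I)t^{n}$ exhaust $[F,F]=\ker\tilde\chi$, and conclude isolation from finite presentability together with Lemma \ref{ntpfd}. The only difference is cosmetic: you spell out the injectivity of the canonical map from the abstract HNN-extension onto $N_{p,q}$, which the paper's proof leaves implicit.
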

\begin{proof}
Set $N=N_{p,q}$ and let $t\in N$ generate $N/[F,F]$, so that $\chi_0(t)$ is positive (hence $\chi_1(t)>0$ as well). Then there exists a dyadic segment $I$, contained in $]0,1[$, such that $t(I)\subset I$ and $\bigcup_{n\ge 0}t^{-n}(I)=]0,1[$ (if $\alpha>0$ is a small enough dyadic number, then $[\alpha,1-\alpha]$ is such a segment). If $F(I)$ is the Thompson group in the interval $I$, then $tF(I)t^{-1}=F(t(I))\subset F(I)$ and $\bigcup_{n\ge 0}t^{-n}F(I)t^n=[F,F]$, and thus $N$ is the ascending HNN-extension with stable letter $t$ and vertex group $F(I)$. In particular, $N$ is finitely presentable. Since by Lemma \ref{ntpfd} it is finitely discriminable, it follows that $N$ is isolated.
\end{proof}

\begin{lemma}\label{bnsth}
If $pq<0$ then $\Sigma^c(N_{p,q})=\Hom(N_{p,q},\R)=\R$.
\end{lemma}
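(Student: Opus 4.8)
The plan is to first determine $\Hom(N_{p,q},\R)$ and then to rule out both of its rays from $\Sigma(N_{p,q})$ via the contraction criterion of Proposition \ref{prp:Link-with-BS78}. Throughout I write $N=N_{p,q}$, and recall that $p,q$ are coprime and nonzero and that $N$ is finitely generated by Lemma \ref{nfg}.

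First I would compute the abelianization of $N$. Since $F_{\mathrm{ab}}\cong\Z^2$ with coordinates $(\chi_0,\chi_1)$, the commutator subgroup $[F,F]=\ker\chi_0\cap\ker\chi_1$ is contained in $N$, and $N/[F,F]$ is the kernel of the map $\Z^2\to\Z$, $(a,b)\mapsto pa-qb$, which is infinite cyclic because $\gcd(p,q)=1$. As $[F,F]$ is simple, hence perfect, by \cite{CFP96}, one gets $[N,N]=[F,F]$ and $N^{\mathrm{ab}}\cong\Z$, whence $\Hom(N,\R)=\R$. I then fix $t\in N$ generating $N/[F,F]$, so that $\chi_0(t)=q$ and $\chi_1(t)=p$, and let $\psi\colon N\epi\Z$ be the associated epimorphism; the two rays of $\Hom(N,\R)$ are then $\R_{>0}\psi$ and $\R_{>0}(-\psi)$, and it suffices to show that neither $\psi$ nor $-\psi$ lies in $\Sigma(N)$.

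By Proposition \ref{prp:Link-with-BS78}, $\psi\in\Sigma(N)$ if and only if conjugation by $t^{-1}$ contracts $\ker\psi=[F,F]$ into a finitely generated subgroup, and likewise $-\psi\in\Sigma(N)$ if and only if conjugation by $t$ does. The key structural input, which I would take from \cite{CFP96}, is that $[F,F]$ consists exactly of the elements of $F$ whose support is compactly contained in $(0,1)$; consequently every finitely generated subgroup $B\subseteq[F,F]$ has support inside some compact interval $[a,b]\subset(0,1)$, and $\mathrm{supp}(t^{\mp n}Bt^{\pm n})\subseteq t^{\mp n}([a,b])$.

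The dynamical heart of the argument, which I expect to be the main obstacle to make fully rigorous, is the following. After possibly exchanging the roles of the endpoints $0$ and $1$, I may assume $q>0>p$; then $t$ has slope $2^q>1$ at $0$, so $0$ is repelling, and slope $2^p<1$ at $1$, so $1$ is attracting. If conjugation by $t$ contracted $[F,F]$ into $B$, one would need $\bigcup_{n\ge0}t^{-n}([a,b])=(0,1)$; but letting $c_1<1$ be the largest fixed point of $t$ in $[0,1)$, which exists with $c_1<1$ since $t(x)>x$ near $1$, the constant sign of $t(x)-x$ on $(c_1,1)$ forces $t^{-n}(b)\le\max(b,c_1)<1$ for all $n\ge0$, so the union misses a neighbourhood of $1$ and cannot capture elements of $[F,F]$ supported there. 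Hence $-\psi\notin\Sigma(N)$. A symmetric argument at the repelling endpoint $0$ shows $\bigcup_{n\ge0}t^{n}([a,b])$ is bounded away from $0$, so conjugation by $t^{-1}$ does not contract $[F,F]$ either, giving $\psi\notin\Sigma(N)$. Since $\Hom(N,\R)=\R\psi$, this yields $\Sigma(N)=\emptyset$, that is $\Sigma^c(N_{p,q})=\Hom(N_{p,q},\R)=\R$. The subtlety to handle with care is precisely the possible interior fixed points of $t$: the clean device is to track the orbit relative to the outermost interior fixed points, using that $t(x)-x$ has constant sign between consecutive fixed points. Alternatively, the identity $\Sigma^c(N_{p,q})=\R$ could be cross-checked against the computation $\Sigma^c(F)=\R_{\ge0}\chi_0\cup\R_{\ge0}\chi_1$ recalled in Remark \ref{rcp}.
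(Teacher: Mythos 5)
Your proof is correct, but it takes a different technical route from the paper's. The paper argues directly from Definition \ref{definition:Sigma(G)}: to show $(\chi_0)_{|N}\in\Sigma^c(N)$ it takes an arbitrary finite subset $S$ of $N_{\chi_0}$, notes that each $s\in S$ is linear on some common $[0,\varepsilon]$ with slope $\ge 1$, and observes that the subgroups $H_n$ of elements of $[F,F]$ trivial on $[0,2^{-n}\varepsilon]$ are then invariant under the monoid generated by $S$ and exhaust $[F,F]$, which is therefore not finitely generated as an $M_S$-group; the symmetric argument at $1$ handles $(\chi_1)_{|N}$, and $pq<0$ makes these two restrictions span opposite rays of $\Hom(N,\R)\cong\R$. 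You instead pass through Proposition \ref{prp:Link-with-BS78} and rule out contraction by $t^{\pm 1}$ via the dynamics of the single element $t$. The trade-off is instructive: the paper's monoid argument quantifies over all finitely generated submonoids of a half-space but only ever uses the local germ at the endpoint (slope $\ge 1$ implies the exhausting filtration $H_n$ is invariant), so it never meets the issue of interior fixed points; your reduction to one element $t$ forces you to control its global dynamics, and the point you flag as delicate --- that $t^{\mp n}(b)$ stays trapped by the outermost interior fixed point because $t(x)-x$ has constant sign beyond it --- is exactly the right fix and is handled correctly. Your route also carries the small extra bookkeeping of computing $N^{\mathrm{ab}}\cong\Z$ (via $[N,N]=[F,F]$, which holds since $[F,F]$ is perfect and contained in $N$) and identifying $\pm\psi$ with positive multiples of $(\chi_0)_{|N}$ and $(\chi_1)_{|N}$; the paper sidesteps this by working with those restrictions directly. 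Both proofs ultimately rest on the same facts from \cite{CFP96} about supports of elements of $[F,F]$.
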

\begin{proof}
Set $N=N_{p,q}$. Let us check that $\chi=(\chi_0)_{|N}$ belongs to $\Sigma^c(N)$. 
We have to check that for any finite subset $S$ of $F_\chi$ generating a submonoid $M_S$, $[F,F]$ is not finitely generated as an $M_S$-group. There exists $\varepsilon>0$ such that for every $s\in S$, the element $s$ is linear in the interval $[0,\varepsilon]$ with slope at least one. It follows that if $H_n$ is the set of elements of $[F,F]$ that are equal to 1 in the interval $[0,2^{-n}\varepsilon]$, then $H_n$ is an $M_S$-subgroup. Since $[F,F]$ is the increasing union $\bigcup H_n$, it is therefore not finitely generated as an $M_S$-subgroup. 

The same argument shows that $(\chi_1)_{|N}$ belongs to $\Sigma^c(N)$. Since $pq<0$, $\Hom(N,\R)$ is the union of the half-lines generated by $(\chi_0)_{|N}$ and $(\chi_1)_{|N}$, and we are done.
\end{proof}

\begin{remark}
We do not use this fact, but the reader can check that if $pq>0$ and $N=N_{p,q}$ then $\Sigma^c(N)$ is the half-line generated by $(\chi_0)_{|N}$ (which also contains $(\chi_1)_{|N}$).
\end{remark} 

\begin{corollary}
If $pq<0$ then $N_{p,q}$ is an extrinsic condensation group and in particular is infinitely presented.
\end{corollary}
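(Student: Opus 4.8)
The plan is to apply the criterion of Corollary~\ref{crl:Criterion-for-extrinsic-condensation} to the group $N=N_{p,q}$. Since $N_{p,q}$ depends only on the coprime reduction of the pair $(p,q)$, and the sign condition $pq<0$ is unaffected by it, I may assume $p,q$ coprime. By Lemma~\ref{nfg} the group $N$ is then finitely generated, so the hypotheses of Corollary~\ref{crl:Criterion-for-extrinsic-condensation} are meaningful; what remains is to exhibit a rational line in $\Sigma^c(N)$ along a homomorphism admitting no non-ascending splitting.

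First I would produce the epimorphism $\chi\colon N\epi\Z$. Under the identification $F/[F,F]\cong\Z^2$ afforded by $(\chi_0,\chi_1)$, the image of $N$ is the rank-one subgroup $\{(a,b)\in\Z^2\mid pa=qb\}$, which is infinite cyclic, generated by $(q,p)$, because $\gcd(p,q)=1$. Hence the projection $\chi\colon N\epi N/[F,F]\cong\Z$ is an epimorphism onto $\Z$ with $\ker\chi=[F,F]$. Next I invoke Lemma~\ref{bnsth}: for $pq<0$ one has $\Sigma^c(N)=\Hom(N,\R)=\R$, so $\Hom(N,\R)$ is one-dimensional and the nonzero class of $\chi$ spans it. Consequently the rational line $\R\chi$ coincides with all of $\Hom(N,\R)=\Sigma^c(N)$; in particular $\R\chi\subset\Sigma^c(N)$, which is the first hypothesis of Corollary~\ref{crl:Criterion-for-extrinsic-condensation}.

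The remaining, and genuinely substantive, point is to rule out a non-ascending splitting of $\chi$ over a finitely generated subgroup. Here I would use that $N$ is a subgroup of Thompson's group $F$, together with the well-known fact that $F$ contains no non-abelian free subgroup; hence $N$ contains none either. Were $\chi$ to admit a non-ascending splitting over a finitely generated subgroup, then Proposition~\ref{cnas}(\ref{nas4}) would produce a non-abelian free subgroup of $N$, a contradiction. (One could instead try to exploit that $\ker\chi=[F,F]$ is simple and contradict one of the amalgam conclusions \ref{nas2} or \ref{nas3} of Proposition~\ref{cnas}; but the free-subgroup obstruction is cleaner, since a nontrivial amalgam may itself be simple, as the Camm examples recalled earlier show.) This is the step I expect to be the crux: the $\Sigma$-theoretic content of Corollary~\ref{crl:Criterion-for-extrinsic-condensation} already excludes an \emph{ascending} splitting once $\R\chi\subset\Sigma^c(N)$, so only the non-ascending case requires a separate, group-theoretic input.

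With all hypotheses of Corollary~\ref{crl:Criterion-for-extrinsic-condensation} verified, I would conclude that $N=N_{p,q}$ is of extrinsic condensation, and Lemma~\ref{condei}(\ref{ec}) then yields that it is infinitely presented.
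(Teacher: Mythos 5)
Your proof is correct and follows essentially the same route as the paper: finite generation from Lemma~\ref{nfg}, the computation $\Sigma^c(N_{p,q})=\Hom(N_{p,q},\R)$ from Lemma~\ref{bnsth}, and then Corollary~\ref{crl:Criterion-for-extrinsic-condensation}. In fact you are more explicit than the paper's own (very terse) proof on the one hypothesis it leaves implicit, namely the exclusion of a non-ascending splitting, which you correctly dispose of via Proposition~\ref{cnas}(\ref{nas4}) and the Brin--Squier fact that $F$ has no non-abelian free subgroup.
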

\begin{proof}
By Lemma \ref{nfg}, $N_{p,q}$ is finitely generated. So Lemma \ref{bnsth} together with Corollary \ref{crl:Criterion-for-extrinsic-condensation} imply that $N_{p,q}$ is of extrinsic condensation (and thus infinitely presented).
\end{proof}

\bibliographystyle{amsalpha}%
\bibliography{BCGSbi}

\end{document}